\numberwithin{equation}{section}
\newcommand{\esssup}{{\mathrm{ess}\sup\,}}
\newcommand{\essinf}{{\mathrm{ess}\inf\,}}
\begin{document}


\title{\bf Minimization and Steiner symmetry of the first eigenvalue for a fractional eigenvalue problem
with indefinite weight}
\date{}
\maketitle
\begin{center}
\textbf {Claudia Anedda, Fabrizio Cuccu and Silvia Frassu}
\\Mathematics and Computer Science Department,\\ University of Cagliari, 09124 Cagliari (CA), Italy
\end{center}

\begin{abstract}
 \noindent Let $\Omega\subset\mathbb{R}^N$, $N\geq 2$, be an open bounded connected set. We consider the fractional weighted eigenvalue problem 
 $(-\Delta)^s u =\lambda \rho u$ in $\Omega$ with homogeneous Dirichlet boundary condition, 
 where $(-\Delta)^s$, $s\in (0,1)$, is the fractional Laplacian operator, $\lambda \in \mathbb{R}$ and 
$ \rho\in L^\infty(\Omega)$.\\
We study weak* continuity, convexity and G\^ateaux differentiability of the map 
$\rho\mapsto1/\lambda_1(\rho)$, where
$\lambda_1(\rho)$ is the first positive eigenvalue.
Moreover, denoting by $\mathcal{G}(\rho_0)$ the class of rearrangements of $\rho_0$,
we prove the existence of a minimizer of $\lambda_1(\rho)$ when $\rho$ varies on $\mathcal{G}(\rho_0)$.
Finally, we show that, if $\Omega$ is Steiner symmetric, then every minimizer shares the same symmetry. 
\end{abstract}

\let\thefootnote\relax\footnote{{\bf Keywords:} fractional Laplacian, eigenvalue problem, optimization, 
Steiner symmetry.}

\let\thefootnote\relax\footnote{{\bf AMS (2010) Subject Classifications:} 35R11, 47A75, 49R05.}

\newtheorem{theorem}{Theorem}[section]
\newtheorem{proposition}[theorem]{Proposition}
\newtheorem{lemma}[theorem]{Lemma}
\newtheorem{corollary}[theorem]{Corollary}
\theoremstyle{definition}
\newtheorem{definition}{Definition}[section]
\newtheorem{rem}{Remark}[section]

\section{Introduction}
Recently, great attention has been focused on the study of fractional and nonlocal operators of elliptic type, both for pure mathematical research and in view of concrete real-world applications.
This type of operators arises in a quite natural way in many different contexts, such as, among others, the thin obstacle problem, optimization, finance, phase transition, stratified materials, anomalous diffusion, crystal dislocation, soft thin films, semipermeable membranes, flame propagation, conservation laws, ultrarelativistic limits of quantum mechanics, quasi-geostrophic flows, multiple scattering, minimal surfaces, materials science, water waves, chemical reactions of liquids, population dynamics, geophysical fluid dynamics and mathematical finance. In all these cases, the nonlocal effect is modeled by the singularity at infinity.
For more details and applications, see \cite{BV, LC, V} and the references therein.\\
In this paper we consider the weighted fractional eigenvalue problem
\begin{equation}\label{p0}
\begin{cases}(-\Delta)^s u =\lambda \rho u \quad &\text{in } \Omega\\
u=0 &\text{in } \Omega^c, \end{cases}  
\end{equation}
where $\Omega\subset\mathbb{R}^N$ ($N\geq 2$) is a bounded domain with $C^{2}$ boundary $\partial\Omega$ and
$(-\Delta)^s$, $s\in (0,1)$, denotes the fractional Laplacian operator defined for all $x\in \mathbb{R}^N$ by 
\begin{equation*}
(-\Delta)^s u(x)= C(N,s) \lim_{\varepsilon \to 0^+} \int_{\mathbb{R}^N\smallsetminus B_\varepsilon(x)}
\frac{u(x)-u(y)}{|x-y|^{N+2s}}\,  dy,
\end{equation*}
where $u:\mathbb{R}^N\to \mathbb{R}$ is a Lebesgue measurable function and $ C(N,s)$ is 
a suitable normalization constant. In the sequel we will assume
$C(N,s) = 1$ (for a precise evaluation of $C(N,s)$ see \cite{CS, DNPV}). 
Finally, $\rho\in L^\infty(\Omega)$, $\lambda\in\mathbb{R}$ and $\Omega^c=\mathbb{R}^N
\smallsetminus \Omega$.\\
The operator $(-\Delta)^s$ is nonlocal, in the sense that the value of $(-\Delta)^s u(x)$ at any point $x\in \Omega$ depends not only on the values of $u$ on the whole $\Omega$, but actually on the whole $\mathbb{R}^n$, since $u(x)$ 
can be thought as the expected value of a random variable tied to a process randomly jumping arbitrarily far from the point $x$. In this sense, the natural Dirichlet boundary condition consists in assigning the values of $u$ in $ \Omega^c$ rather than merely on $\partial \Omega$ (a general reference on the theory can be found in \cite{DNPV, MBRS}). \\
 The problem \eqref{p0} with $\rho\equiv 1$ has been investigated by Servadei and Valdinoci in \cite{SV2} for a general nonlocal operator. Molica Bisci et al., in \cite{MBRS}, studied the same problem with a positive and Lipschitz continuous weight $\rho$.
 Iannizzotto and Papageorgiou, in \cite{IP}, considered the case of a general positive function $\rho\in L^\infty(\Omega)$ and
 Frassu and Iannizzotto, in \cite{FI}, treated a more general eigenvalue problem with indefinite weight $\rho\in L^\infty(\Omega)$.\\
 We denote by $\lambda_k(\rho)$, $k\in \mathbb{Z}\smallsetminus 
 \{0\}$, the $k$-th eigenvalue of problem
 \eqref{p0} corresponding to the weight $\rho$.
 In this paper we study the dependence of $\lambda_k(\rho)$ on $\rho$, in 
 particular we investigate
 continuity and, for $k=1$, convexity and differentiability properties. Then, we examine the minimization of $\lambda_1(\rho)$ in
 the class of rearrangements $\mathcal{G}(\rho_0)$ of a fixed function $\rho_0\in L^\infty(\Omega)$.
 We prove the existence of minimizers and a characterization of them in terms of the
 eigenfunctions relative to $\lambda_1(\rho)$.
 Moreover, when $\Omega$ is a Steiner symmetric domain, we get that any minimizer inherits the same symmetry. Consequently, if $\Omega $ is a ball, there exists a unique radially symmetric minimizer.\\
The analogous problem in the case of the Laplacian operator has been studied by Cox and McLaughlin
in \cite{Cox1,Cox2} when the weight $\rho_0$ is a positive step function. 
Cosner et al. in \cite{CCP} studied the same optimization problem with an indefinite weight $\rho_0
\in L^\infty(\Omega)$ for the first eigenvalue, they proved existence of optimizers and a characterization formula of them. Related problems are
investigated in \cite{ClaFa1, ClaFa2}. In the first paper the eigenvalue problem is driven by the $p$-Laplacian
operator. In the second an example of symmetry breaking of the minimizer is exhibited.
For a complete survey on the optimization of eigenvalues 
related to elliptic problems we refer the reader to \cite{He}. \\
We remark that the argument used in this paper to prove the existence of minimizers is inspired by the approach 
of \cite{He} and it is different from those used in \cite{CCP,Cox1,Cox2},
 nevertheless it can be applied also for the corresponding problem driven by the Laplacian operator.\\
This paper is organized in this way: in Section 2 we fix the functional framework and study the
eigenvalues of problem \eqref{p0}; in Section 3 we collect some results about rearrangements of measurable functions;
in Section 4 we prove the existence results; finally, in Section 5 we focus on the symmetry of the minimizers.\\
Throughout the paper, and unless otherwise specified, measurable means Lebesgue measurable and $|E|$ denotes the Lebesgue measure of a measurable set $E\subseteq \mathbb{R}^N$.

\section{Fractional weighted eigenvalue problem}

 Let $\Omega\subseteq \mathbb{R}^N$, $N\geq 2$, be a bounded domain with $C^2$ boundary. We denote by
\begin{equation*}
\langle u,v \rangle_{L^2(\Omega)}=\int_\Omega uv \, dx \quad \forall\, u, v\in L^2(\Omega)
\end{equation*}
the usual inner product in $L^2(\Omega)$
and by
\begin{equation*}
\|u\|_{L^2(\Omega)}^2=\langle u,u \rangle_{L^2(\Omega)} \quad \forall\, u\in L^2(\Omega)
\end{equation*}
the corresponding norm.\\
\medskip

In order to formulate problem \eqref{p0} in weak form we introduce the fractional Sobolev space (for a systematic treatise of this topics see \cite{DNPV}). 
For any $s\in (0,1)$ we define the fractional Sobolev space
$$H^s(\mathbb{R}^N) =\left\{u\in L^2(\mathbb{R}^N) : \int_{\mathbb{R}^{2N}}
\frac{\big(u(x)-u(y) \big)^2}{|x-y|^{N+2s}}\, dx \,dy<\infty\right\}$$
and its subspace
$$H_0^s(\Omega)=\{u\in H^s(\mathbb{R}^N): u(x)=0 \text{ for a.e. }x\in \Omega^c\}.$$
$H_0^s(\Omega)$ is a 
separable Hilbert space under the
inner product
$$\langle u,v\rangle_{H_0^s(\Omega)}=  \int_{\mathbb{R}^{2N}}
\frac{\big(u(x)-u(y) \big)\big(v(x)-v(y) \big)}{|x-y|^{N+2s}}\, dx\, dy$$
whose associated norm is
$$\|u\|_{H_0^s(\Omega)}^2=  \int_{\mathbb{R}^{2N}}
\frac{\big(u(x)-u(y) \big)^2}{|x-y|^{N+2s}}\, dx\, dy.$$ 
We denote by
$H^{-s}(\Omega)$ and $\| \cdot \|_{H^{-s}(\Omega)}$ the topological dual of 
$H_0^s(\Omega)$ and its norm.
Clearly $H_0^s(\Omega)\subset L^2(\Omega)$ and moreover $H_0^s(\Omega)$
contains $C^\infty_0(\Omega)$ as a dense subset.\\
As in the case of the usual Sobolev spaces, the following inclusions 
\begin{equation*}i: H_0^s(\Omega) \hookrightarrow L^2(\Omega)
\end{equation*}
 \begin{equation*}j:L^2(\Omega)
 \hookrightarrow H^{-s}(\Omega)\end{equation*}
are compact and dense and there exists a positive constant $C$ such that 
\begin{equation}\label{i}\|\varphi\|_{L^2(\Omega)}\le C \|\varphi\|_{H_0^s(\Omega)} \quad \forall\, \varphi\in H_0^s(\Omega)\end{equation}
 \begin{equation}\label{j}\|\varphi\|_{H^{-s}(\Omega)}\le C \|\varphi\|_{L^2(\Omega)} \quad \forall\, \varphi\in L^2(\Omega).\end{equation}
\\
 \medskip

Let us now introduce the notion of weak solution of the boundary value problem
\begin{equation}\label{p2}
\begin{cases}(-\Delta)^s u = f \quad &\text{in } \Omega\\
u=0 &\text{in } \Omega^c, \end{cases}  
\end{equation}
where $f\in H^{-s}(\Omega)$. A function $u\in H_0^s(\Omega)$ is called
\emph{weak solution} of problem \eqref{p2} if
\begin{equation*}
\langle u,\varphi\rangle_{H_0^s(\Omega)}
=\langle f,\varphi\rangle \quad\forall\, \varphi\in H_0^s(\Omega)
\end{equation*}
holds, where $\langle f,g\rangle$ means the duality between $f\in H^{-s}(\Omega)$ and $g\in H_0^s(\Omega)$.
By the Riesz-Fr\'echet representation Theorem, for every $f\in H^{-s}(\Omega)$ 
there exists a unique solution $u\in H_0^{s}(\Omega)$ of \eqref{p2} and moreover 
\begin{equation}\label{normadu}
\|u\|_{H_0^{s}(\Omega)}= \|f\|_{H^{-s}(\Omega)}.
\end{equation}

We call $G$,
\begin{equation}\label{operatore}G:H^{-s}(\Omega)\to H_0^s(\Omega),\end{equation} the linear operator defined by $G(f)=u$.
Identity \eqref{normadu} implies 
\begin{equation}\label{normaduG}
\|G\|_{\mathcal{L}(H^{-s}(\Omega), H_0^{s}(\Omega))}=1.
\end{equation}

For any $\rho$ in  $L^\infty (\Omega)$, let $M_\rho: L^2(\Omega) \to L^2(\Omega)$ be the linear operator 
defined by $M_\rho(f)= \rho f$. Of course
\begin{equation}\label{mro}\|\rho f\|_{L^2(\Omega)} \leq 
\|\rho\|_{L^\infty(\Omega)}\|f\|_{L^2(\Omega)}.\end{equation}
Next, we introduce the linear operator
\begin{equation}\label{G}
G_\rho:H_0^s(\Omega)\to H_0^s(\Omega)
\end{equation}
defined by $G_\rho=G\circ j \circ M_\rho \circ i $ or, briefly, $G_\rho (f)=G(\rho f)$. Equivalently, $u=G_\rho (f)$ is the unique 
weak solution of the problem
\begin{equation*}
\begin{cases}(-\Delta)^s u = \rho f \quad &\text{in } \Omega\\
u=0 &\text{in } \Omega^c,\end{cases}  
\end{equation*}
i.e. 
\begin{equation}\label{vespa}
\langle u,\varphi\rangle_{H_0^s(\Omega)}
=\langle \rho f,\varphi\rangle_{L^2(\Omega)} \quad\forall\, \varphi\in H_0^s(\Omega).
\end{equation}

From  \eqref{i}, \eqref{j}, \eqref{normaduG} and \eqref{mro} it follows straightforwardly that 
\begin{equation*}
\|G_\rho\|_{\mathcal{L}(H_0^{s}(\Omega), H_0^{s}(\Omega))} \leq C^2 
\|\rho\|_{L^\infty(\Omega)}.
\end{equation*}
In the sequel we will use the formula 
\begin{equation}\label{linearit}
G_{a\rho +b\eta}= a\,  G_{\rho}+b\, G_\eta\quad \forall\, \rho, \eta\in L^\infty(\Omega), \ 
\forall\, a, b\in \mathbb{R}.
\end{equation}
In particular, \eqref{linearit} implies $G_{-\rho}=-G_\rho$ for all $\rho\in L^\infty(\Omega)$.

\begin{proposition}\label{self}
Let $G_\rho$ be the operator \eqref{G}. Then $G_\rho$ is a self-adjoint compact operator.
\end{proposition}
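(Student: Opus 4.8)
The plan is to handle the two assertions separately, each reducing immediately to the factorization $G_\rho=G\circ j\circ M_\rho\circ i$ together with the weak formulation \eqref{vespa}.

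\emph{Compactness.} I would recall that the embeddings $i:H_0^s(\Omega)\hookrightarrow L^2(\Omega)$ and $j:L^2(\Omega)\hookrightarrow H^{-s}(\Omega)$ are compact, while $M_\rho$ is bounded by \eqref{mro} and $G$ is bounded by \eqref{normaduG}. Since the composition of a compact operator with bounded operators (on either side) is again compact, writing $G_\rho=(G\circ j\circ M_\rho)\circ i$ exhibits $G_\rho$ as the composition of the bounded operator $G\circ j\circ M_\rho$ with the compact embedding $i$, whence $G_\rho$ is compact. (Equivalently one may absorb the compactness into the factor $j$.)

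\emph{Self-adjointness.} Fix $f,g\in H_0^s(\Omega)$ and set $u=G_\rho f$. By \eqref{vespa}, $\langle u,\varphi\rangle_{H_0^s(\Omega)}=\langle\rho f,\varphi\rangle_{L^2(\Omega)}$ for every $\varphi\in H_0^s(\Omega)$; choosing $\varphi=g$ gives $\langle G_\rho f,g\rangle_{H_0^s(\Omega)}=\int_\Omega\rho f g\,dx$. Applying the same identity to $v=G_\rho g$ with test function $f$ yields $\langle G_\rho g,f\rangle_{H_0^s(\Omega)}=\int_\Omega\rho g f\,dx$. Since $\rho fg=\rho gf$ a.e.\ and the inner product of $H_0^s(\Omega)$ is symmetric, I conclude $\langle G_\rho f,g\rangle_{H_0^s(\Omega)}=\langle f,G_\rho g\rangle_{H_0^s(\Omega)}$ for all $f,g$, i.e.\ $G_\rho$ is self-adjoint.

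There is no genuine obstacle here; the only point requiring a little care is the bookkeeping of the identifications hidden in the factorization — in particular that the duality pairing of $j(\rho f)\in H^{-s}(\Omega)$ with $\varphi\in H_0^s(\Omega)$ is precisely $\int_\Omega\rho f\varphi\,dx$, which is exactly what \eqref{vespa} records, together with the fact that $u=G(\rho f)$ is the Riesz representative furnishing \eqref{normadu}. Once these are in place, both parts follow with no estimates beyond those already collected in the excerpt.
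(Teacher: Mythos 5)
Your proposal is correct and follows essentially the same route as the paper: self-adjointness via the identity $\langle G_\rho f,g\rangle_{H_0^s(\Omega)}=\int_\Omega \rho f g\,dx$ obtained from \eqref{vespa}, and compactness from the factorization $G_\rho=G\circ j\circ M_\rho\circ i$ with the compact embeddings $i$, $j$. No gaps to report.
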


\begin{proof}
For all $f, g\in H_0^s(\Omega)$, by \eqref{vespa}, we have
$$ \langle G_\rho(f), g\rangle_{H_0^s(\Omega)} = 
\langle G(\rho f), g\rangle_{H_0^s(\Omega)}= 
\langle \rho f, g\rangle_{L^2(\Omega)}=
\langle \rho g, f\rangle_{L^2(\Omega)} =
\langle G_\rho (g), f\rangle_{H_0^s(\Omega)},$$
then $G_\rho$ is self-adjoint.\\
The compactness of the operator $G_\rho$ is an immediate consequence of the 
representation $G_\rho=G\circ j \circ M_\rho \circ i $ and the compactness of
$i$ and $j$.
\end{proof}

By general theory of self-adjoint compact operators (see \cite{Br,DF,Lax})
it follows that all nonzero eigenvalues of $G_\rho$ have a finite dimensional eigenspace 
and they can be obtained by the Fischer's Principle
\begin{equation}\label{Fischer1}\mu_k(\rho) =  \max_{F_k}
\min_{f\in F_k\atop f\neq 0}
\cfrac{\langle G_\rho f, f \rangle_{H^s_0(\Omega)} }{\|f\|^2_{H_0^{s}(\Omega)}} \,= \max_{F_k}
\min_{f\in F_k\atop f\neq 0}
\cfrac{\int_\Omega \rho f^2 \, dx }{\|f\|^2_{H_0^{s}(\Omega)}} \,, \quad k=1, 2, 3, \ldots\end{equation}  
and
\begin{equation*}\mu_{-k}(\rho) =  \min_{F_k}
\max_{f\in F_k\atop f\neq 0}
\cfrac{\langle G_\rho f, f \rangle_{H^s_0(\Omega)} }{\|f\|^2_{H_0^{s}(\Omega)}} \,=\min_{F_k}
\max_{f\in F_k\atop f\neq 0}
\cfrac{\int_\Omega\rho f^2 \, dx }{\|f\|^2_{H_0^{s}(\Omega)}}\,,\quad k=1, 2, 3,  \ldots,\end{equation*}  
where the first extrema are taken over all the subspaces $F_k$ of $H_0^{s}(\Omega)$ of dimension
$k$. The sequence $\{\mu_k(\rho)\}$ contains all the real positive
eigenvalues (repeated with their multiplicity), is decreasing and converging to zero, whereas $\{\mu_{-k}(\rho)\}$ 
is formed by all the real negative
eigenvalues (repeated with their multiplicity), is increasing and converging to zero.\\

\begin{rem}\label{osserv} By the Fischer's Principle it follows easily that
$\mu_{-k}(\rho)= -\mu_{k}(-\rho)$ for all $\rho\in L^\infty(\Omega)$ and $k=1, 2, 3, \ldots$\\
For this reason, in the rest of the paper, we will consider mainly positive eigenvalues.
\end{rem}

We will write $\{\rho>0\}$ as short form of $\{x\in \Omega: \rho(x)>0\}$ and similarly 
$\{\rho<0\}$ for $\{x\in \Omega: \rho(x)<0\}$. The following proposition
is analogous to \cite[Proposition 1.11]{DF}.

\begin{proposition}\label{segnorho}Let $\rho \in L^\infty(\Omega)$, $G_\rho$ the operator defined in \eqref{G}
and $\mu_k(\rho)$, $\mu_{-k}(\rho)$ its eigenvalues. The following statements hold:\\
i) if $|\{\rho >0\}|= 0$, then there are no positive eigenvalues;\\
ii)  if $|\{\rho >0\}|> 0$, then there is a sequence of positive eigenvalues $\mu_k(\rho)$;\\
iii) if $|\{\rho <0\}|= 0$, then there are no negative eigenvalues;\\
iv)  if $|\{\rho <0\}|> 0$, then there is a sequence of negative eigenvalues $\mu_{-k}(\rho)$.
\end{proposition}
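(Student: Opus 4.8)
The plan is to reduce parts (iii)--(iv) to parts (i)--(ii) and then to treat (i) by a one-line contradiction and (ii) by a test-subspace construction, along the lines of the proof of the quoted result in \cite{DF}. For the reduction, note that by \eqref{linearit} one has $G_{-\rho}=-G_\rho$, so the negative eigenvalues of $G_\rho$ are exactly the opposites of the positive eigenvalues of $G_{-\rho}$ (this is Remark~\ref{osserv}); since $\{-\rho>0\}=\{\rho<0\}$, statement (iii) follows from (i) applied to $-\rho$ and (iv) from (ii) applied to $-\rho$, so it is enough to prove (i) and (ii). For (i): if $|\{\rho>0\}|=0$ then $\rho\le 0$ a.e.\ in $\Omega$, and if $\mu>0$ were an eigenvalue with eigenfunction $f\in H_0^s(\Omega)\setminus\{0\}$, pairing $G_\rho f=\mu f$ with $f$ in $H_0^s(\Omega)$ and invoking \eqref{vespa} (equivalently the identity displayed in \eqref{Fischer1}) would give $\mu\|f\|_{H_0^s(\Omega)}^2=\int_\Omega\rho f^2\,dx$, whose left-hand side is strictly positive and right-hand side is $\le 0$, a contradiction. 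Hence $G_\rho$ has no positive eigenvalue.

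For (ii), by the Fischer principle \eqref{Fischer1} it suffices to exhibit, for each $k\in\mathbb{N}$, a subspace $V\subset H_0^s(\Omega)$ with $\dim V=k$ on which $f\mapsto\int_\Omega\rho f^2\,dx$ is positive definite: taking $F_k=V$ in \eqref{Fischer1} and using that this Rayleigh quotient attains a positive minimum on the compact unit sphere of the finite-dimensional space $V$ then yields $\mu_k(\rho)>0$ for every $k$, i.e.\ a whole sequence of positive eigenvalues. To construct $V$ I would pick $\varepsilon>0$ with $|A|>0$, where $A:=\{x\in\Omega:\rho(x)\ge\varepsilon\}$ (possible since $\{\rho>0\}=\bigcup_n\{\rho>1/n\}$), partition $A$ into measurable sets $A_1,\dots,A_k$ of positive measure, and set $g_i:=|A_i|^{-1/2}\mathbf{1}_{A_i}$, which are orthonormal in $L^2(\Omega)$ and supported in $A$. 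Since $C_0^\infty(\Omega)$ is dense in $L^2(\Omega)$ and $C_0^\infty(\Omega)\subset H_0^s(\Omega)$, I can choose $\phi_i\in C_0^\infty(\Omega)$ with $\|\phi_i-g_i\|_{L^2(\Omega)}<\delta$ for $\delta$ small. Writing $f=\sum_i c_i g_i+\sum_i c_i(\phi_i-g_i)$, using orthonormality of the $g_i$ and that $\sum_i c_i g_i$ vanishes off $A$, a routine estimate shows that for $\delta\sqrt{k}<1$ the $\phi_i$ are linearly independent, so $V:=\mathrm{span}\{\phi_1,\dots,\phi_k\}$ has dimension $k$, and every $f\in V\setminus\{0\}$ satisfies $\|f\|_{L^2(\Omega\setminus A)}^2\le\beta(\delta)\,\|f\|_{L^2(\Omega)}^2$ with $\beta(\delta)=\big(\delta\sqrt{k}/(1-\delta\sqrt{k})\big)^2\to 0$ as $\delta\to 0^+$. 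Using $\rho\ge\varepsilon$ on $A$ and $\rho\ge-\|\rho\|_{L^\infty(\Omega)}$ on $\Omega\setminus A$ then gives
\[
\int_\Omega\rho f^2\,dx\ge\varepsilon\|f\|_{L^2(A)}^2-\|\rho\|_{L^\infty(\Omega)}\|f\|_{L^2(\Omega\setminus A)}^2\ge\Big(\varepsilon\big(1-\beta(\delta)\big)-\|\rho\|_{L^\infty(\Omega)}\,\beta(\delta)\Big)\|f\|_{L^2(\Omega)}^2,
\]
which is strictly positive for every $f\in V\setminus\{0\}$ as soon as $\delta$ is small enough that $\beta(\delta)<\varepsilon/(\varepsilon+\|\rho\|_{L^\infty(\Omega)})$.

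The step I expect to be the main obstacle is precisely this construction in (ii): because $\rho$ is only in $L^\infty(\Omega)$, the superlevel set $\{\rho\ge\varepsilon\}$ carries no topological structure, so one cannot restrict $H_0^s(\Omega)$ to it or insert bump functions inside it directly; the way around is to approximate the $L^2$-indicators supported on $\{\rho\ge\varepsilon\}$ by $C_0^\infty(\Omega)$-functions and to control, via the perturbation estimate above, how much $L^2$-mass these approximants leak outside $\{\rho\ge\varepsilon\}$. The reduction, part (i), and the final Rayleigh-quotient bookkeeping are all routine.
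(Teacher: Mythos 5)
Your argument is correct: part (i) is the same one-line sign argument as in the paper, your reduction of (iii)--(iv) to (i)--(ii) via $G_{-\rho}=-G_\rho$ is exactly what the paper means by ``similarly proved,'' and part (ii) follows the same skeleton as the paper, namely Fischer's principle applied to an explicitly constructed $k$-dimensional subspace of smooth functions on which $f\mapsto\int_\Omega\rho f^2\,dx$ is positive. Where you diverge is in how that subspace is built. The paper takes $k$ \emph{disjoint} closed balls $B_1,\dots,B_k\subset\Omega$ with $|B_i\cap\{\rho>0\}|>0$ and picks $f_i\in C_0^\infty(B_i)$ normalized by $\int_\Omega\rho f_i^2\,dx=1$; the disjoint supports make the quadratic form of $\rho$ diagonal on $\mathrm{span}\{f_1,\dots,f_k\}$, so the Rayleigh quotient is $\|a\|^2_{\mathbb{R}^k}/\langle E_ka,a\rangle_{\mathbb{R}^k}$ and one even gets the explicit lower bound $\mu_k(\rho)\ge 1/\|E_k\|$, where $E_k$ is the Gram matrix in $H_0^s(\Omega)$. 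You instead partition a superlevel set $\{\rho\ge\varepsilon\}$, approximate the normalized indicators in $L^2$ by $C_0^\infty$ functions, and control the mass leaking outside via the perturbation estimate with $\beta(\delta)$; this only gives positivity (via compactness of the unit sphere of $V$) rather than a clean quantitative bound, but it has the merit of making fully explicit a step the paper asserts without proof, namely the existence of smooth compactly supported functions with $\int_\Omega\rho f_i^2\,dx>0$ when $\rho$ is merely an $L^\infty$ weight whose positivity set has no topological structure -- precisely the point you identified as the main obstacle. Both routes are legitimate; the paper's is shorter and sharper because the disjoint-ball trick (by a covering/density argument) replaces your leakage estimate, while yours is more self-contained at that one point.
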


\begin{proof}
i) Let $\mu$ be an eigenvalue and $u$ a corresponding eigenfunction. Then 
$$\mu= \cfrac{\langle G_\rho u, u\rangle_{H_0^{s}(\Omega)}}{\|u\|^2_{H_0^{s}(\Omega)}}\, 
= \cfrac{\int_\Omega \rho u^2\, dx }{\|u\|^2_{H_0^{s}(\Omega)}}\, \leq 0.$$
ii) By measure theory covering theorems, for each positive integer $k$ there exist $k$ 
disjoint closed balls $B_1, \ldots, B_k$ in $\Omega$ such that $| B_i \cap \{\rho>0\}|>0$ 
for $i=1, \ldots, k$. Let $f_i\in C^\infty_0(B_i)$ such that 
$\int_\Omega \rho f_i^2 \, dx=1$ for every $i=1, \ldots, k$. Note that the functions 
$f_i$ are linearly independent and let $F_k= \ $span$  \{f_1, \ldots, f_k\}$. $F_k$ is 
a subspace of $H_0^{s}(\Omega)$ and
for every $f\in F_k\smallsetminus 
\{0\}$, $f=\sum_{i=1}^k a_i f_i$, $a_i\in \mathbb{R}$,  we have
\begin{equation*} \begin{split}\cfrac{\langle G_\rho f, 
f\rangle_{H_0^{s}(\Omega)}}{\|f\|^2_{H_0^{s}(\Omega)}}\, = &
\cfrac{\int_\Omega \rho f^2  \, dx }{\|f\|^2_{H_0^{s}(\Omega)}} \, =
\cfrac{\sum_{i, j=1}^k \int_\Omega \rho f_i f_j a_i a_j \,dx}{\sum_{i,j=1}^k\langle f_i, f_j\rangle_{H_0^{s}(\Omega)} a_i a_j} \,\\&
=\cfrac{\sum_{i=1}^k a_i^2}{\sum_{i,j=1}^k\langle f_i, f_j\rangle_{H_0^{s}(\Omega)} a_i a_j} \,=
\cfrac{\|a\|^2_{\mathbb{R}^k}}{\langle E_k a, a\rangle_{\mathbb{R}^k}}\,\geq \cfrac{1}{\|E_k\|}\, >0,
\end{split}\end{equation*}
where $\|a\|_{\mathbb{R}^k}$, $\|E_k\|$ and $\langle E_k a, a\rangle_{\mathbb{R}^k}$ denote, respectively,
the euclidean norm of the vector $a=(a_1, \ldots, a_k)$, the norm of the non null matrix 
$E_k=\left( \langle f_i, f_j\rangle_{H_0^{s}(\Omega)} \right)_{i, j=1}^k$ and the inner product in $\mathbb{R}^k$.
From the Fischer's Principle \eqref{Fischer1} we conclude that $\mu_k(\rho)\geq \cfrac{1}{\|E_k\|}\, >0$ for
every $k$.\\
The cases iii) and iv) are similarly proved.
\end{proof}

Finally, we introduce the weak formulation of problem \eqref{p0}.
A function $u\in H_0^s(\Omega)\smallsetminus \{0\}$ is said an \emph{eigenfunction} of \eqref{p0} associated to the \emph{eigenvalue} $\lambda$ if 
\begin{equation}\label{rana}
\langle u,\varphi\rangle_{H_0^s(\Omega)}
=\lambda \langle \rho u,\varphi\rangle_{L^2(\Omega)} \quad\forall\, \varphi\in H_0^s(\Omega),
\end{equation}
that is
\begin{equation*}
\int_{\mathbb{R}^{2N}}\frac{\big(u(x)-u(y) \big)\big(\varphi(x)-\varphi(y) \big)}{|x-y|^{N+2s}}\, dx dy
= \lambda \int_\Omega \rho(x) u(x)\varphi(x)\, dx \quad\forall\, \varphi\in H_0^s(\Omega).
\end{equation*}

It is easy to check that zero is not an eigenvalue of problem \eqref{p0}.
The eigenvalues of problem \eqref{p0} are exactly the reciprocal 
of the nonzero eigenvalues of the operator $G_\rho$ and the correspondent eigenspaces coincide. 
Indeed, if $\lambda\neq 0$ is an eigenvalue of problem \eqref{p0}
and $u$ is an associated eigenfunction, by \eqref{rana} we have
\begin{equation*}
\left\langle \frac{u}{\lambda},\varphi\right\rangle_{H_0^s(\Omega)}
=\langle \rho u,\varphi\rangle_{L^2(\Omega)} \quad\forall\, \varphi\in H_0^s(\Omega)
\end{equation*}
and then, by definition of $G_\rho$, $G_\rho (u)= \cfrac{u}{\lambda}\, $. Consequently, in general,
the eigenvalues of problem \eqref{p0} form two monotone sequences
$$ 0<\lambda_1(\rho)\leq \lambda_2(\rho)\leq\ldots\leq  \lambda_k(\rho)\leq \ldots$$
and
$$ \ldots\leq\lambda_{-k}(\rho)\leq\ldots\leq \lambda_{-2}(\rho)\leq  \lambda_{-1}(\rho)<0  ,$$
where every eigenvalue appears as many times as its multiplicity, the latter being finite
owing to the compactness of $G_\rho$. \\
It has been recently shown in \cite{FI} that $\lambda_1(\rho)$ and $\lambda_{-1}(\rho)$ 
are simple and
any associated eigenfunction is one signed in $\Omega$.  We call \emph{first eigenfunction}
any eigenfunction relative to $\lambda_1(\rho)$. The 
variational characterization \eqref{Fischer1} for $k=1$ becomes
\begin{equation}\label{mu1}\mu_1(\rho) = \max_{f\in H_0^s(\Omega)\atop f\neq 0}
\cfrac{\int_\Omega \rho f^2  \, dx }{\|f\|^2_{H_0^{s}(\Omega)}} \,\end{equation}
and, thus, for $\lambda_1(\rho)$ we have
\begin{equation}\label{2a}\lambda_1(\rho) = \min_{u\in H_0^{s}(\Omega)\atop u\neq 0}
\cfrac{\|u\|^2_{H_0^{s}(\Omega) }}{\int_\Omega \rho u^2 \, dx}\,.\end{equation}  
The maximum in \eqref{mu1} (respectively the minimum in \eqref{2a})
is obtained if and only if $f$ (respectively $u$) is a first eigenfunction.  Throughout the paper 
we will denote by $u_\rho$ the first positive eigenfunction of problem \eqref{p0}
normalized by
\begin{equation}\label{normaliz1}
\|u_\rho\|_{H_0^{s}(\Omega)}= 1,
\end{equation}
which is equivalent to 
\begin{equation}\label{normaliz2}
\int_\Omega \rho u_\rho^2 \, dx= \cfrac{1}{\lambda_1(\rho)}\,.
\end{equation}
As last comment, we observe that $\mu_1(\rho)$ is homogeneous of degree 1, i.e. 
\begin{equation}\label{homo}\mu_1(\alpha\rho)= \alpha \mu_1(\rho) \quad \forall\, \alpha>0.\end{equation}
This follows immediately from \eqref{mu1}.
\medskip


\section{Rearrangements of measurable functions} In this section we introduce the concept of
rearrangement of a measurable function and summarize some related results we will use in next section. 
The idea of rearranging a function dates back to the book \cite{hardy52} 
of Hardy, Littlewood and P\'olya, since than many authors have investigated both extensions and 
applications of this notion. Here we relies on the results in \cite{Alvino,B,B89,day70,
Kaw,ryff67}.

Let $\Omega$ be an open bounded set of $\mathbb{R}^N$.

\begin{definition}
For every measurable 
function $f:\Omega\to\mathbb{R}$ the function $d_f:\mathbb{R}\to [0,|\Omega|]$ defined by
$$d_f(t)=|\{x\in\Omega: f(x)>t\}|$$
is called \emph{distribution function of $f$}.
\end{definition}

The symbol $\mu_f$ is also used. It is easy to prove the
following properties of $d_f$.

\begin{proposition}
For each $f$ the distribution function $d_f$ is decreasing, right continuous and
the following identities hold true
$$\lim_{t\to-\infty}d_f(t)=|\Omega|,\quad\quad \lim_{t\to\infty}d_f(t)=0.$$
\end{proposition}

\begin{definition}
Two measurable functions $f,g:\Omega \to \mathbb{R}$ are called \emph{equimeasurable} functions 
or \emph{rearrengements} of one another if one of the following equivalent conditions is satisfied

i)   $|\{x\in \Omega: f(x)>t\}|=|\{x\in \Omega: g(x)>t\}| \quad \forall\, t\in \mathbb{R}$;

ii)  $d_f=d_g$.
\end{definition}

Equimeasurability of $f$ and $g$ is denoted by $f\sim g$. Equimeasurable functions
share global extrema and integrals as it is stated precisely by the following proposition.
\begin{proposition}\label{rospo}
Suppose $f\sim g$ and let $F:\mathbb{R}\to\mathbb{R}$ be a Borel measurable function, then
 
i) $|f|\sim |g|$;

ii) $\esssup f=\esssup g$ and  $\essinf f=\essinf g$;

iii) $F\circ f\sim F\circ g$;

iv) $F\circ f\in L^1(\Omega)$ implies $F\circ g\in L^1(\Omega)$ and $\int_\Omega F\circ f\,dx=
\int_\Omega F\circ g\,dx$.

\end{proposition}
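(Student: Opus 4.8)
The plan is to reduce all four assertions to a single measure-theoretic fact: if $f\sim g$, then for \emph{every} Borel set $A\subseteq\mathbb{R}$ one has $|f^{-1}(A)|=|g^{-1}(A)|$. Setting $\nu_f(A):=|f^{-1}(A)|$ and $\nu_g(A):=|g^{-1}(A)|$, these are finite Borel measures on $\mathbb{R}$, each of total mass $|\Omega|$, and by the very definition of equimeasurability $\nu_f((t,\infty))=d_f(t)=d_g(t)=\nu_g((t,\infty))$ for all $t\in\mathbb{R}$. Since the half-lines $\{(t,\infty):t\in\mathbb{R}\}$ form a $\pi$-system generating the Borel $\sigma$-algebra of $\mathbb{R}$, and $\nu_f(\mathbb{R})=\nu_g(\mathbb{R})$, the $\pi$--$\lambda$ theorem of Dynkin yields $\nu_f=\nu_g$. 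This is the only step with genuine content; everything else is bookkeeping with distribution functions and the transfer formula for push-forward measures.

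Granting this, (iii) is immediate: $F$ being Borel measurable, $A_t:=F^{-1}((t,\infty))$ is a Borel subset of $\mathbb{R}$, and $\{x\in\Omega:F(f(x))>t\}=f^{-1}(A_t)$, so $d_{F\circ f}(t)=\nu_f(A_t)=\nu_g(A_t)=d_{F\circ g}(t)$ for every $t$, i.e. $F\circ f\sim F\circ g$. Statement (i) is then the special case $F(x)=|x|$ (continuous, hence Borel); alternatively it can be checked directly, writing $|\{|f|>t\}|=|\Omega|$ for $t<0$ and $|\{|f|>t\}|=d_f(t)+\big(|\Omega|-\lim_{s\to(-t)^-}d_f(s)\big)$ for $t\ge 0$, a quantity depending only on $d_f$ by the monotonicity and right continuity of $d_f$ established in the preceding proposition.

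For (ii) one observes that $\esssup f=\inf\{t\in\mathbb{R}:d_f(t)=0\}$ and $\essinf f=\sup\{t\in\mathbb{R}:d_f(t)=|\Omega|\}$ (with the conventions $\inf\emptyset=+\infty$, $\sup\emptyset=-\infty$); both are determined by $d_f$ alone, so $d_f=d_g$ gives the equalities. For (iv), the change-of-variables formula for push-forward measures gives, for any nonnegative Borel $H:\mathbb{R}\to[0,\infty]$, $\int_\Omega H(f(x))\,dx=\int_{\mathbb{R}}H\,d\nu_f$ and likewise with $g$; applying this with $H=F^+$ and $H=F^-$ and using $\nu_f=\nu_g$, we obtain $\int_\Omega (F\circ f)^{\pm}\,dx=\int_\Omega (F\circ g)^{\pm}\,dx$. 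If $F\circ f\in L^1(\Omega)$ the two left-hand integrals are finite, hence so are the right-hand ones, so $F\circ g\in L^1(\Omega)$, and subtracting yields $\int_\Omega F\circ f\,dx=\int_\Omega F\circ g\,dx$.

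The main obstacle is exactly the passage from ``the measures agree on half-lines'' to ``the measures agree on all Borel sets''; once this is established the proposition follows formally. A secondary point to verify carefully is the one-sided limit identity $|\{f\ge r\}|=\lim_{s\to r^-}d_f(s)$ invoked in the hands-on proof of (i), which is where the structural properties of $d_f$ are used.
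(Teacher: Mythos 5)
Your argument is correct and complete. One point of comparison is moot here: the paper does not prove this proposition at all, it simply refers to \cite[Proposition 3.3]{day70} and \cite[Lemma 2.1]{B89}, so you have supplied a self-contained proof where the authors defer to the literature. Your route --- pass from $d_f=d_g$ to equality of the push-forward measures $\nu_f=\nu_g$ on all Borel sets via Dynkin's $\pi$--$\lambda$ theorem (the half-lines $(t,\infty)$ form a generating $\pi$-system, and the total masses agree because $|\Omega|<\infty$), then read off (iii) from $\{F\circ f>t\}=f^{-1}(F^{-1}((t,\infty)))$, (i) as the case $F=|\cdot|$, and (iv) from the transfer formula applied to $F^{\pm}$ --- is the standard measure-theoretic proof and is essentially what the cited sources do; it buys all four statements at once rather than treating them separately through manipulations of $d_f$ or of the decreasing rearrangement $f^*$. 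The details you flag check out: $|\{f\ge r\}|=\lim_{s\to r^-}d_f(s)$ holds by continuity from above of the finite measure $|\cdot|$ on the decreasing family $\{f>s\}$, $s\uparrow r$; your formulas $\esssup f=\inf\{t:d_f(t)=0\}$ and $\essinf f=\sup\{t:d_f(t)=|\Omega|\}$ agree with the usual definitions (for the latter, $d_f(t)=|\Omega|$ iff $|\{f\le t\}|=0$, and the supremum over such $t$ coincides with $\sup\{t:|\{f<t\}|=0\}$); and the Borel (not merely Lebesgue) measurability of $F$ is exactly what makes $F\circ f$ measurable and $F^{-1}((t,\infty))$ an admissible set for $\nu_f$, so the hypothesis is used where it must be.
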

For a proof see, for example, \cite[Proposition 3.3]{day70} or \cite[Lemma 2.1]{B89}.

In particular, for each $1\leq p\leq\infty$, if $f\in L^p(\Omega)$ and $f\sim g$ then
$g\in L^p(\Omega)$ and 
\begin{equation}\label{equi}
 \|f\|_{L^p(\Omega)}=\|g\|_{L^p(\Omega)}.
\end{equation}

\begin{definition}
For every measurable function $f:\Omega\to\mathbb{R}$ the function $f^*:(0,|\Omega|)\to
\mathbb{R}$ defined by
$$f^*(s)=\sup\{t\in\mathbb{R}: d_f(t)>s\}$$
is called \emph{decreasing rearrangement of $f$}.
\end{definition}
An equivalent definition (used by some authors) is $f^*(s)=\inf\{t\in\mathbb{R}: d_f(t)\leq
s\}$. 

\begin{proposition} \label{furbi}
For each $f$ its decreasing rearrangement $f^*$ is decreasing, right continuous and
we have 
$$\lim_{s\to 0}f^*(s)=\esssup f\quad\text{and}\quad\lim_{s\to|\Omega|}f^*(s)=\essinf f.$$
Moreover, if $F:\mathbb{R}\to\mathbb{R}$ is a Borel measurable function then
$F\circ f\in L^1(\Omega)$ implies $F\circ f^*\in L^1(0,|\Omega|)$ and 
$$\int_\Omega F\circ f\,dx=\int_0^{|\Omega|} F\circ f^*\,ds.$$ 
Finally, $d_{f^*}=d_f$ and, for each measurable function $g$ we have $f\sim g$ if and only if $f^*=g^*$.
\end{proposition}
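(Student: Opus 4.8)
The plan is to treat $f^{*}$ as the generalized right-continuous inverse of the distribution function $d_{f}$ and to read off every assertion from the properties of $d_{f}$ recorded above.

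First I would dispose of the elementary items. For $s\in(0,|\Omega|)$ the set $\{t:d_{f}(t)>s\}$ is non-empty (since $d_{f}(t)\to|\Omega|>s$ as $t\to-\infty$) and bounded above (since $d_{f}(t)\to0<s$ as $t\to+\infty$), so $f^{*}(s)$ is a well-defined real number; monotonicity is then immediate from the inclusion $\{t:d_{f}(t)>s_{2}\}\subseteq\{t:d_{f}(t)>s_{1}\}$ valid for $s_{1}<s_{2}$. For right continuity at a point $s_{0}$ I would take any $t<f^{*}(s_{0})$, choose $\tau>t$ with $d_{f}(\tau)>s_{0}$, note that $f^{*}(s)\ge\tau>t$ for every $s\in(s_{0},d_{f}(\tau))$, and let $t\uparrow f^{*}(s_{0})$; this yields $\lim_{s\downarrow s_{0}}f^{*}(s)\ge f^{*}(s_{0})$, the reverse inequality being automatic since $f^{*}$ is decreasing. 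The two limit identities have the same flavour: because $f^{*}$ is decreasing, $\lim_{s\to0^{+}}f^{*}(s)=\sup_{s>0}f^{*}(s)$, and one checks on the one hand that $d_{f}(t)>s>0$ forces $t\le\esssup f$, and on the other hand that every $t<\esssup f$ has $d_{f}(t)>0$ and hence $f^{*}\big(d_{f}(t)/2\big)\ge t$; the identity for $\essinf f$ is symmetric, exploiting that $t<\essinf f$ gives $d_{f}(t)=|\Omega|$ while $t>\essinf f$ gives $|\{f\ge t\}|<|\Omega|$.

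The crucial step, and the one I expect to require the most care, is the identity $d_{f^{*}}=d_{f}$, where $f^{*}$ is regarded as a function on the interval $(0,|\Omega|)$ of measure $|\Omega|$: here the strict versus non-strict inequalities must be tracked precisely, and this is exactly the point where the right continuity of $d_{f}$ enters. Fixing $t\in\mathbb{R}$, the definition gives $f^{*}(s)>t$ if and only if there exists $\tau>t$ with $d_{f}(\tau)>s$; since $d_{f}$ is decreasing, this is equivalent to $\lim_{\tau\downarrow t}d_{f}(\tau)>s$, and right continuity of $d_{f}$ identifies this limit with $d_{f}(t)$. Hence $\{s\in(0,|\Omega|):f^{*}(s)>t\}=(0,d_{f}(t))$, whose measure is $d_{f}(t)$; that is, $d_{f^{*}}(t)=d_{f}(t)$.

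Granting $d_{f^{*}}=d_{f}$, the rest is routine. This equality says that the two finite Borel measures on $\mathbb{R}$ obtained by pushing Lebesgue measure forward through $f$ (on $\Omega$) and through $f^{*}$ (on $(0,|\Omega|)$) assign the same mass $d_{f}(t)$ to each half-line $(t,\infty)$, hence they coincide; equivalently $|f^{-1}(E)|=|(f^{*})^{-1}(E)|$ for every Borel set $E\subseteq\mathbb{R}$. Consequently $\int_{\Omega}(F\circ f)\,dx=\int_{0}^{|\Omega|}(F\circ f^{*})\,ds$ holds first for $F=\chi_{E}$, then by linearity for non-negative simple $F$, and then for every non-negative Borel $F$ by monotone convergence (both sides allowed to be $+\infty$); applying this to $|F|$ shows that $F\circ f\in L^{1}(\Omega)$ implies $F\circ f^{*}\in L^{1}(0,|\Omega|)$, and then splitting $F=F^{+}-F^{-}$ gives the identity for general such $F$. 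One may alternatively deduce these facts directly from the results quoted after Proposition~\ref{rospo}, since equimeasurability depends only on the common distribution function. Finally, $f\sim g$ means $d_{f}=d_{g}$, which by the very definition of the decreasing rearrangement immediately gives $f^{*}=g^{*}$; conversely $f^{*}=g^{*}$ forces $d_{f}=d_{f^{*}}=d_{g^{*}}=d_{g}$, i.e.\ $f\sim g$.
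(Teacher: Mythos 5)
Your proof is correct, and it is worth noting that the paper does not actually prove this proposition: it simply remarks that the claims follow from the definition of $f^*$ and refers to Day's thesis (\cite[Chapter 2]{day70}). You instead give a complete, self-contained argument, treating $f^*$ as the generalized right-continuous inverse of $d_f$. The core of your argument is sound: well-definedness and monotonicity from the limits $d_f(t)\to|\Omega|$ and $d_f(t)\to 0$; right continuity of $f^*$ by an approximation from inside the set $\{t: d_f(t)>s_0\}$; the limit identities at $0$ and $|\Omega|$ from the definitions of $\esssup$ and $\essinf$; and, crucially, the identity $\{s\in(0,|\Omega|): f^*(s)>t\}=(0,d_f(t))$, which you correctly reduce to the right continuity of $d_f$ (quoted in the paper's earlier proposition) — this is exactly the delicate point, and you track the strict inequalities properly. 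From $d_{f^*}=d_f$ the integral identity follows by your $\pi$-system/pushforward argument (half-lines determine the finite Borel measures $f_\#(dx)$ and $(f^*)_\#(ds)$), indicator functions, linearity and monotone convergence, with the $L^1$ statement obtained by applying this to $|F|$ and splitting $F=F^+-F^-$; and the equivalence $f\sim g \Leftrightarrow f^*=g^*$ is immediate since $f^*$ depends only on $d_f$. What your route buys, compared with the paper's citation, is a short elementary verification using only the properties of $d_f$ already stated in the paper; what it costs is nothing essential, though for the record the last equivalence could also be quoted directly from \cite{day70} as the paper does.
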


Some of the previous claims are simple consequences of the definition of $f^*$, for
more details see \cite[Chapter 2]{day70}. 

As before, it follows that, for each $1\leq p\leq\infty$, if $f\in L^p(\Omega)$ then
$f^*\in L^p(0,|\Omega|)$ and $\|f\|_{L^p(\Omega)}=\|f^*\|_{L^p(0,|\Omega|)}.$
 
\begin{definition}\label{prec1}
Given two functions $f,g\in L^1(\Omega)$, we write $g\prec f$ if
$$\int_0^t g^*\,ds\leq \int_0^t f^*\,ds\quad\forall\; 0\leq t\leq|\Omega|\quad\quad
{and}\quad\quad\int_0^{|\Omega|} g^*\,ds= \int_0^{|\Omega|} f^*\,ds.$$ 
\end{definition}
Note that $g\sim f$ if and only if $g\prec f$ and $f\prec g$. Among many properties of the relation
$\prec$ we mention the following (a proof is in \cite[Lemma 8.2]{day70}).

\begin{proposition}\label{prec}
For any pair of functions $f,g\in L^1(\Omega)$ and real numbers $\alpha$ and $\beta$,
if $\alpha\leq f\leq\beta$ a.e. in $\Omega$ and $g\prec f$ then $\alpha\leq g\leq\beta$ a.e. in $\Omega$.
\end{proposition}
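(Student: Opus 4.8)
The statement to prove is Proposition 3.11 (labeled \texttt{prec}): if $f,g\in L^1(\Omega)$ with $\alpha\le f\le\beta$ a.e.\ and $g\prec f$, then $\alpha\le g\le\beta$ a.e.\ in $\Omega$. The plan is to translate everything into statements about the decreasing rearrangements $f^*,g^*:(0,|\Omega|)\to\mathbb{R}$, use the endpoint behaviour recorded in Proposition~\ref{furbi}, and then exploit the two defining conditions of $\prec$ (the majorization inequality on all initial segments, together with equality of the full integrals) to pinch $g^*$ between $\alpha$ and $\beta$.

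First I would observe that, by Proposition~\ref{furbi}, $\essinf f=\lim_{s\to|\Omega|}f^*(s)$ and $\esssup f=\lim_{s\to 0}f^*(s)$, and since $\alpha\le f\le\beta$ a.e.\ we get $\alpha\le f^*\le\beta$ on $(0,|\Omega|)$ (both $f^*$ and $g^*$ are decreasing, so it suffices to control their limits at the endpoints, or just use that $f^*$ takes values in the essential range of $f$). It then suffices to prove $\alpha\le g^*\le\beta$ on $(0,|\Omega|)$, because $g\sim g^*$ preserves essential sup and inf by Proposition~\ref{rospo}(ii) (applied via Proposition~\ref{furbi}), so $\alpha\le g^*$ everywhere forces $\essinf g=\lim_{s\to|\Omega|}g^*(s)\ge\alpha$, hence $g\ge\alpha$ a.e., and symmetrically for $\beta$. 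For the upper bound $g^*\le\beta$: since $g^*$ is decreasing it is enough to bound $\lim_{s\to 0^+}g^*(s)$, and the majorization $\int_0^t g^*\,ds\le\int_0^t f^*\,ds\le \beta t$ for all small $t>0$ gives $\frac1t\int_0^t g^*\,ds\le\beta$; letting $t\to0^+$ and using monotone/right-continuity of $g^*$ yields $g^*(0^+)\le\beta$, hence $g^*\le\beta$ on all of $(0,|\Omega|)$.

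For the lower bound $g^*\ge\alpha$, this is where the \emph{equality} of the full integrals in Definition~\ref{prec1} is essential (the majorization inequality alone would not suffice). I would argue by contradiction: suppose $g^*(s_0)<\alpha$ for some $s_0\in(0,|\Omega|)$; since $g^*$ is decreasing, $g^*(s)\le g^*(s_0)<\alpha$ for all $s\ge s_0$. Combining with the already-established $g^*\le\beta$ and comparing against $f^*\ge\alpha$, write
\begin{equation*}
\int_0^{|\Omega|}(f^*-g^*)\,ds=\int_0^{s_0}(f^*-g^*)\,ds+\int_{s_0}^{|\Omega|}(f^*-g^*)\,ds.
\end{equation*}
On $(s_0,|\Omega|)$ the integrand satisfies $f^*-g^*\ge\alpha-g^*(s_0)>0$, so that second integral is strictly positive, being at least $(\alpha-g^*(s_0))(|\Omega|-s_0)>0$; on $(0,s_0)$ the integrand is $\ge 0$ by the majorization condition applied with $t=s_0$ — more precisely, I would instead use the initial-segment inequality directly: $\int_0^{s_0}(f^*-g^*)\,ds\ge 0$. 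Adding, $\int_0^{|\Omega|}(f^*-g^*)\,ds>0$, contradicting the equality $\int_0^{|\Omega|}g^*=\int_0^{|\Omega|}f^*$ in Definition~\ref{prec1}. Hence $g^*\ge\alpha$ on $(0,|\Omega|)$, and we are done.

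The main obstacle is making the last contradiction argument airtight: one must be careful that $g^*<\alpha$ on a set of positive measure (not just at one point — but monotonicity of $g^*$ handles this, since if $g^*(s_0)<\alpha$ then $g^*<\alpha$ on the whole interval $[s_0,|\Omega|)$), and one must correctly invoke the right defining inequality of $\prec$ on the initial segment $(0,s_0)$ to control the sign of $\int_0^{s_0}(f^*-g^*)$. Everything else — the endpoint identities, the transfer of bounds between $g$ and $g^*$ — is a routine application of the propositions already established in this section. If one prefers to avoid the endpoint-limit bookkeeping, an alternative is to note directly that $\alpha\le f\le\beta$ a.e.\ is equivalent to $\alpha t\le\int_0^t f^*\,ds\le\beta t$ for all $t$ together with the values of $f^*$ lying in $[\alpha,\beta]$, but the contradiction argument above is the cleanest route.
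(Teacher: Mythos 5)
Your proof is correct. Note that for this proposition the paper offers no argument of its own: it simply cites \cite[Lemma 8.2]{day70}, so there is no in-text proof to compare against, and your write-up in effect supplies the missing elementary argument. The structure is sound: reducing the claim to $\alpha\le g^*\le\beta$ via Proposition \ref{furbi} (endpoint limits of the decreasing rearrangement giving $\essinf$ and $\esssup$), getting the upper bound from $\int_0^t g^*\,ds\le\int_0^t f^*\,ds\le\beta t$, and getting the lower bound by contradiction using monotonicity of $g^*$ together with the \emph{equality} of the total integrals in Definition \ref{prec1} — which, as you correctly emphasize, is exactly where the full strength of $\prec$ is needed. Two small streamlining remarks: for the upper bound you can avoid the limit $t\to 0^+$ altogether, since monotonicity gives directly $g^*(t)\le\frac1t\int_0^t g^*\,ds\le\beta$ for every $t\in(0,|\Omega|)$; and in the contradiction step your initial phrasing that the integrand is nonnegative on $(0,s_0)$ is not justified pointwise, but you correct this yourself by invoking only $\int_0^{s_0}(f^*-g^*)\,ds\ge0$ from the definition of $\prec$, which is all that is needed.
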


\begin{proposition}\label{prec2}
For $f\in L^1(\Omega)$ let $g=\frac{1}{|\Omega|}\int_\Omega f \, dx$. Then we have 
$g\prec f$.
\end{proposition}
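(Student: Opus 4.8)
The statement to prove is Proposition \ref{prec2}: for $f\in L^1(\Omega)$ and $g$ the constant function equal to the average $\frac{1}{|\Omega|}\int_\Omega f\,dx$, we have $g\prec f$.

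\textbf{Proof plan.}

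The plan is to verify the two conditions in Definition \ref{prec1} directly. First I would compute the decreasing rearrangement of the constant function $g$: since $g$ is constant with value $c:=\frac{1}{|\Omega|}\int_\Omega f\,dx$, its distribution function is $d_g(t)=|\Omega|$ for $t<c$ and $d_g(t)=0$ for $t\ge c$, so $g^*\equiv c$ on $(0,|\Omega|)$. Hence for every $t\in[0,|\Omega|]$,
\begin{equation*}
\int_0^t g^*\,ds = c\,t = \frac{t}{|\Omega|}\int_\Omega f\,dx.
\end{equation*}

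Next I would handle the equality of full integrals, which is the easy endpoint condition: by Proposition \ref{furbi} (the integral-preservation property applied with $F=\mathrm{id}$, noting $f\in L^1$), $\int_0^{|\Omega|} f^*\,ds = \int_\Omega f\,dx$, and likewise $\int_0^{|\Omega|} g^*\,ds = \int_\Omega g\,dx = \int_\Omega f\,dx$. So the two quantities agree, giving the second requirement in Definition \ref{prec1}.

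The main point is the inequality $\int_0^t g^*\,ds \le \int_0^t f^*\,ds$ for all $t\in[0,|\Omega|]$. Using the computation above this reads $\frac{t}{|\Omega|}\int_0^{|\Omega|} f^*\,ds \le \int_0^t f^*\,ds$. I would prove this from the fact that $f^*$ is decreasing: the average of $f^*$ over the initial interval $(0,t)$ is at least its average over the whole interval $(0,|\Omega|)$. Concretely, set $\Phi(t)=\int_0^t f^*\,ds$; then $\Phi$ is concave on $[0,|\Omega|]$ because $\Phi'=f^*$ is decreasing. A concave function with $\Phi(0)=0$ satisfies $\Phi(t)\ge \frac{t}{|\Omega|}\Phi(|\Omega|)$ for $t\in[0,|\Omega|]$ (the graph lies above the chord joining $(0,0)$ and $(|\Omega|,\Phi(|\Omega|))$), which is exactly the desired inequality. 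Alternatively one can argue directly: $\int_0^t f^*\,ds \ge t\, f^*(t) \ge \frac{t}{|\Omega|}\big(t f^*(t) + (|\Omega|-t) f^*(t)\big)$ is too crude, so the clean route is to split $\int_0^{|\Omega|} f^* = \int_0^t f^* + \int_t^{|\Omega|} f^*$ and use $\int_0^t f^*\,ds \ge t f^*(t)$ together with $\int_t^{|\Omega|} f^*\,ds \le (|\Omega|-t) f^*(t)$ to get $|\Omega|\int_0^t f^*\,ds \ge t\big(\int_0^t f^* + \int_t^{|\Omega|} f^*\big) = t\int_0^{|\Omega|} f^*\,ds$. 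Either way there is no real obstacle here — the whole proposition is a short monotonicity argument once the rearrangement of the constant is identified; the only mild care needed is the measure-zero/endpoint behavior, handled by right-continuity of $f^*$ from Proposition \ref{furbi}.
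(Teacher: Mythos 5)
Your proof is correct. Note that the paper itself states Proposition \ref{prec2} without any proof (it is treated as a standard fact from the rearrangement literature, in the spirit of the references given for Propositions \ref{rospo} and \ref{prec}), so there is no internal argument to compare against; what you wrote is precisely the standard verification and it fills the gap completely. The identification $g^*\equiv c$ on $(0,|\Omega|)$, the endpoint equality via Proposition \ref{furbi} with $F=\mathrm{id}$, and the key inequality $\frac{t}{|\Omega|}\int_0^{|\Omega|}f^*\,ds\le\int_0^t f^*\,ds$ obtained from the monotonicity of $f^*$ are all sound; both of your routes to that inequality work, either the concavity of $\Phi(t)=\int_0^t f^*\,ds$ (with $\Phi(0)=0$, so the graph lies above the chord), or the splitting $|\Omega|\int_0^t f^*\,ds\ge t\int_0^t f^*\,ds+(|\Omega|-t)\,t\,f^*(t)\ge t\int_0^{|\Omega|}f^*\,ds$, which amounts to the Chebyshev-type comparison of the average of a decreasing function over $(0,t)$ with its average over $(0,|\Omega|)$. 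Two minor remarks: since $f\in L^1(\Omega)$, the value $f^*(t)$ is finite for every $t\in(0,|\Omega|)$ and $f^*\in L^1(0,|\Omega|)$ (apply Proposition \ref{furbi} with $F(t)=|t|$), so the pointwise bounds you use are legitimate; and the appeal to right-continuity of $f^*$ at the end is not actually needed, because the cases $t=0$ and $t=|\Omega|$ are immediate.
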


\begin{definition}\label{class} 
Let $f:\Omega\to\mathbb{R}$ a measurable function. We call the set
$$\mathcal{G}(f)=\{g:\Omega\to\mathbb{R}: g \text{ is measurable and } g\sim f \}$$
\emph{class of rearrangement of $f$} or \emph{set of rearrangements of $f$}.
\end{definition}

Note that, for $1\leq p\leq\infty$, if $f$ is in $L^p(\Omega)$ then $\mathcal{G}(f)$ is
contained in $L^p(\Omega)$.

As we will see in the next section, we are interested in the optimization of a functional 
defined on a class of rearrangement $\mathcal{G}(\rho_0)$,
where $\rho_0$ belongs to $L^\infty(\Omega)$. For this reason, although almost all of what follows
holds in a much more general context, hereafter we restrict our attention to rearrangement classes of
functions in $L^\infty(\Omega)$.
We need compactness properties of the set $\mathcal{G}(\rho_0)$, with a little
effort it can be showed that this set is closed but in general it is not compact in the norm topology
of $L^\infty(\Omega)$. Therefore we focus our attention on the weak* compactness.
By $\overline{\mathcal{G}(\rho_0)}$ we denote the closure of $\mathcal{G}(\rho_0)$ in the weak* topology
of $L^\infty(\Omega)$.

\begin{proposition}\label{cane}
 Let $\rho_0$ be a function of $L^\infty(\Omega)$. Then $\overline{\mathcal{G}(\rho_0)}$ is
 
 i)  weakly* compact;
 
 ii) metrizable in the weak* topology;
 
 iii) sequentially weakly* compact. 
\end{proposition}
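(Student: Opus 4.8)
The plan is to exhibit $\overline{\mathcal{G}(\rho_0)}$ as a weak*-closed subset of a norm-bounded ball in $L^\infty(\Omega)$, and then to read off all three properties from the Banach--Alaoglu theorem together with the metrizability of bounded sets in the dual of a separable Banach space.

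First I would establish a uniform bound on the class. By Proposition~\ref{rospo}~ii), every $g\in\mathcal{G}(\rho_0)$ satisfies $\essinf\rho_0\le g\le\esssup\rho_0$ almost everywhere in $\Omega$, hence $\|g\|_{L^\infty(\Omega)}\le\|\rho_0\|_{L^\infty(\Omega)}=:R$. Thus $\mathcal{G}(\rho_0)\subseteq\overline{B}_R:=\{u\in L^\infty(\Omega):\|u\|_{L^\infty(\Omega)}\le R\}$, and since $\overline{B}_R$ is weak*-closed, its weak* closure satisfies $\overline{\mathcal{G}(\rho_0)}\subseteq\overline{B}_R$ as well.

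For i), I would identify $L^\infty(\Omega)$ with the dual of $L^1(\Omega)$ and invoke the Banach--Alaoglu theorem, which gives that $\overline{B}_R$ is weak*-compact; since $\overline{\mathcal{G}(\rho_0)}$ is weak*-closed by definition and is contained in $\overline{B}_R$, it is a closed subset of a compact space, hence weak*-compact. For ii), I would use that $L^1(\Omega)$ is separable (because $\Omega$ is a bounded domain in $\mathbb{R}^N$ with Lebesgue measure) together with the classical fact (see \cite{Br}) that when the predual is separable the weak* topology is metrizable on every norm-bounded subset of the dual; restricting this metric to $\overline{\mathcal{G}(\rho_0)}\subseteq\overline{B}_R$ yields the claim. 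For iii), I would simply combine i) and ii): a compact metrizable topological space is sequentially compact, so $\overline{\mathcal{G}(\rho_0)}$ is sequentially weakly* compact.

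The only step carrying any subtlety is the uniform $L^\infty$ bound on $\mathcal{G}(\rho_0)$ — namely, that passing to a rearrangement neither raises the essential supremum nor lowers the essential infimum — but this is exactly Proposition~\ref{rospo}~ii) and hence already at our disposal. Everything else is a direct application of standard functional-analytic results, so I do not anticipate a genuine obstacle.
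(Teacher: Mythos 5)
Your proposal is correct and follows essentially the same route as the paper: bound $\mathcal{G}(\rho_0)$ in the closed ball of radius $\|\rho_0\|_{L^\infty(\Omega)}$ using equimeasurability, apply Banach--Alaoglu for weak* compactness of the closed set $\overline{\mathcal{G}(\rho_0)}$, use separability of $L^1(\Omega)$ for metrizability on bounded sets, and combine the two for sequential compactness. No gaps to report.
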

\begin{proof}
i) By \eqref{equi} it follows that $\mathcal{G}(\rho_0)$ is contained in $B_{\|\rho_0\|_
{L^\infty(\Omega)}}=\{f\in L^\infty(\Omega): \|f\|_{L^\infty(\Omega)}\leq\|\rho_0\|_{L^\infty(\Omega)}\}$.
$B_{\|\rho_0\|_{L^\infty(\Omega)}}$ is weakly* compact and then it is also weakly* closed because the weak* topology is Hausdorff. Hence 
$\overline{\mathcal{G}(\rho_0)}$ is a weakly* closed subset of $B_{\|\rho_0\|_{L^\infty(\Omega)}}$ and
thus it is weakly* compact as well.
ii) Owing to the separability of $L^1(\Omega)$, $B_{\|\rho_0\|_{L^\infty(\Omega)}}$ is metrizable in the weak* topology and
the claim follows.
iii) It is an immediate consequence of i) and ii).
\end{proof}

Moreover, the sets $\mathcal{G}(\rho_0)$ and $\overline{\mathcal{G}(\rho_0)}$ have further
properties. 

\begin{definition}
Let $C$ be a convex set of a real vector space. An element $v$ in $C$ is said an \emph{extreme point of
$C$} if for every $u$ and $w$ in $C$ the identity $v=\frac{u}{2}+\frac{w}{2}$ implies $u=w$.
\end{definition}
A vertex of a polygon is an example of extreme point.

\begin{proposition}\label{convexity}
Let $\rho_0$ be a function of $L^\infty(\Omega)$, then

i) $\overline{\mathcal{G}(\rho_0)}=\{f\in L^\infty(\Omega): f\prec \rho_0\}$, 

ii) $\overline{\mathcal{G}(\rho_0)}$ is convex,

iii) $\mathcal{G}(\rho_0)$ is the set of the extreme points of $\overline{\mathcal{G}(\rho_0)}$.
\end{proposition}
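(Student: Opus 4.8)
The plan is to establish the three parts in the order (i), (ii), (iii), since (ii) follows almost immediately from (i) and (iii) rests on the description in (i). For part (i), I would prove the two inclusions separately. The inclusion $\overline{\mathcal{G}(\rho_0)}\subseteq\{f\in L^\infty(\Omega):f\prec\rho_0\}$ is the easier one: the set $\{f\in L^1(\Omega):f\prec\rho_0\}$ is convex (the map $f\mapsto\int_0^t f^*\,ds$ is known to be convex on $L^1$, and the equality constraint $\int_0^{|\Omega|}f^*\,ds=\int_0^{|\Omega|}\rho_0^*\,ds$ is linear in $f$ by Proposition \ref{furbi}) and it is closed in the weak* topology of $L^\infty(\Omega)$; since $\mathcal{G}(\rho_0)$ is contained in it (as $g\sim\rho_0\Rightarrow g\prec\rho_0$), so is its weak* closure. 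The reverse inclusion $\{f\prec\rho_0\}\subseteq\overline{\mathcal{G}(\rho_0)}$ is the substantive point: given $f\prec\rho_0$ one must approximate $f$ in the weak* topology by functions equimeasurable with $\rho_0$. This is precisely the classical result of Ryff (see \cite{ryff67}) together with the refinements in \cite{B,B89,day70}; I would invoke it, citing \cite[Theorem ...]{B89} or \cite{day70}, rather than reprove it. Together the two inclusions give (i).

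For part (ii), convexity of $\overline{\mathcal{G}(\rho_0)}$ is now immediate: by (i) it equals $\{f\in L^\infty(\Omega):f\prec\rho_0\}$, and this set is convex for the reasons already noted — if $f_1\prec\rho_0$ and $f_2\prec\rho_0$, then for $\theta\in[0,1]$ and every $t$ one has $\int_0^t(\theta f_1+(1-\theta)f_2)^*\,ds\le\theta\int_0^t f_1^*\,ds+(1-\theta)\int_0^t f_2^*\,ds\le\int_0^t\rho_0^*\,ds$ by subadditivity of the map $f\mapsto\int_0^t f^*$, while the total-integral equality is preserved because integration is linear.

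For part (iii) I would argue both directions. First, every $g\in\mathcal{G}(\rho_0)$ is an extreme point of $\overline{\mathcal{G}(\rho_0)}$: suppose $g=\tfrac12 u+\tfrac12 w$ with $u,w\in\overline{\mathcal{G}(\rho_0)}$, i.e. $u\prec\rho_0$ and $w\prec\rho_0$. Using Proposition \ref{prec} with $\alpha=\essinf\rho_0$, $\beta=\esssup\rho_0$ and the fact (Proposition \ref{rospo}) that $g$ attains the same essential bounds as $\rho_0$, plus an examination of the level sets $\{g=\esssup\rho_0\}$ and $\{g=\essinf\rho_0\}$, one forces $u=w=g$ on those sets; iterating this layer-cake argument over all level sets of $g$ (or, more cleanly, noting that the extreme points of $\{f:f\prec\rho_0\}$ in $L^1$ are exactly $\mathcal{G}(\rho_0)$, a known theorem — see \cite{day70,ryff67}) gives the claim. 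Conversely, any extreme point of $\overline{\mathcal{G}(\rho_0)}$ must lie in $\mathcal{G}(\rho_0)$: if $f\prec\rho_0$ but $f\not\sim\rho_0$, then there is a level $t$ at which the rearrangement inequality is strict on a set of positive measure, and one can construct $\varphi\in L^\infty(\Omega)$, $\varphi\neq0$, supported where $f$ takes intermediate values, such that $f\pm\varphi$ still satisfy $\prec\rho_0$; then $f=\tfrac12(f+\varphi)+\tfrac12(f-\varphi)$ exhibits $f$ as a nontrivial convex combination, contradicting extremality. I expect the main obstacle to be the construction in this last step — producing the perturbation $\varphi$ that stays inside $\{\,\cdot\prec\rho_0\}$ in both directions — and the cleanest route is to quote the $L^1$ characterization of extreme points from \cite{day70} (or \cite{ryff67,B89}) and then observe that, since $\overline{\mathcal{G}(\rho_0)}\subseteq B_{\|\rho_0\|_{L^\infty(\Omega)}}$, the $L^1$ and $L^\infty$ notions of extreme point coincide here.
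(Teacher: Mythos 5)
Your proposal is correct and in substance takes the same route as the paper: the paper's entire proof of Proposition \ref{convexity} is a citation of \cite[Theorems 22.13, 22.2, 17.4, 20.3]{day70}, i.e. exactly the Ryff--Day--Burton results you ultimately invoke for the weak* density of $\mathcal{G}(\rho_0)$ in $\{f\in L^\infty(\Omega): f\prec\rho_0\}$ and for the identification of the extreme points. The extra elementary steps you fill in (weak* closedness and convexity of $\{f\prec\rho_0\}$, and the reduction of the $L^\infty$ extreme-point question to the $L^1$ one) are sound but are simply subsumed in the cited theorems in the paper's version.
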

\begin{proof}
The claims follow from \cite[Theorems 22.13, 22.2, 17.4, 20.3]{day70}.  
\end{proof}

An evident consequence of the previous theorem is that $\overline{\mathcal{G}(\rho_0)}$ is
the weakly* closed convex hull of $\mathcal{G}(\rho_0)$.

The following is \cite[Theorem 11.1]{day70} rephrased for our case.

\begin{proposition}
Let $u\in L^1(\Omega)$ and $\rho_0\in L^\infty(\Omega)$. Then 
\begin{equation}\label{day}
\int_0^{|\Omega|} \rho_0^*(|\Omega|-s) u^*(s)\, ds\leq\int_\Omega\rho\,u\, dx
\leq\int_0^{|\Omega|} \rho_0^*(s) u^*(s)\, ds\quad\quad\forall\rho\in\mathcal{G}(\rho_0),
\end{equation}
moreover both sides of \eqref{day} are taken on.
\end{proposition}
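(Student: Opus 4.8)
The result is the Hardy--Littlewood inequality, specialised to the rearrangement class $\mathcal{G}(\rho_0)$, together with the sharpness of its two bounds, so the plan is to reduce the inequality to its classical one-dimensional form and then to exhibit explicit extremisers. First I would note that $\rho\in\mathcal{G}(\rho_0)$ means $\rho\sim\rho_0$, whence $\rho^{*}=\rho_0^{*}$ and $d_\rho=d_{\rho_0}$ by Proposition~\ref{furbi}; in particular the two integrals bounding $\int_\Omega\rho\,u\,dx$ in \eqref{day} depend on $\rho$ only through $\rho_0^{*}$, so it suffices to prove the inequality for $\rho=\rho_0$ and then to find $\rho_1,\rho_2\in\mathcal{G}(\rho_0)$ attaining, respectively, the left-hand and the right-hand bound.

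For the inequalities I would use the layer--cake representation, after one simplification: the ``defect'' $\int_\Omega f\,u\,dx-\int_0^{|\Omega|}f^{*}(s)u^{*}(s)\,ds$ is invariant under $f\mapsto f+a$, $u\mapsto u+b$ for constants $a,b\in\mathbb{R}$, because $(f+a)^{*}=f^{*}+a$, $(u+b)^{*}=u^{*}+b$ and $\int_0^{|\Omega|}f^{*}\,ds=\int_\Omega f\,dx$, $\int_0^{|\Omega|}u^{*}\,ds=\int_\Omega u\,dx$ by Proposition~\ref{furbi}; the same holds with $u^{*}(|\Omega|-s)$ in place of $u^{*}(s)$. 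Shifting by $a=\|\rho_0\|_{L^\infty(\Omega)}$ and truncating $u$ from below ($u_n=\max(u,-n)\downarrow u$, then passing to the limit by dominated convergence, which is legitimate since $\rho_0\in L^\infty(\Omega)$ and $u_n^{*}\downarrow u^{*}$), it is enough to prove the upper bound when $\rho_0\ge0$ and $u\ge0$. In that case $\rho_0(x)=\int_0^\infty\chi_{\{\rho_0>t\}}(x)\,dt$ and $u(x)=\int_0^\infty\chi_{\{u>\tau\}}(x)\,d\tau$, so by Fubini $\int_\Omega\rho_0\,u\,dx=\int_0^\infty\!\!\int_0^\infty\bigl|\{\rho_0>t\}\cap\{u>\tau\}\bigr|\,dt\,d\tau\le\int_0^\infty\!\!\int_0^\infty\min\bigl(d_{\rho_0}(t),d_u(\tau)\bigr)\,dt\,d\tau$; since $\rho_0^{*}$ and $u^{*}$ are decreasing, their super-level sets in $(0,|\Omega|)$ are the intervals $(0,d_{\rho_0}(t))$ and $(0,d_u(\tau))$, and the last double integral is exactly $\int_0^{|\Omega|}\rho_0^{*}(s)u^{*}(s)\,ds$. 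The lower bound follows by applying the upper one to $-u$, using $(-u)^{*}(s)=-u^{*}(|\Omega|-s)$ a.e.

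For the sharpness the plan is to choose a rearrangement of $\rho_0$ built to be ``equally ordered'' with $u$. By the standard theory of rearrangements on the nonatomic measure space $(\Omega,|\cdot|)$ (see \cite{day70,ryff67}) there is a measure-preserving map $T\colon\Omega\to(0,|\Omega|)$ with $u=u^{*}\circ T$ a.e.\ in $\Omega$. Put $\rho_2:=\rho_0^{*}\circ T$: since $T$ is measure-preserving and $d_{\rho_0^{*}}=d_{\rho_0}$ one has $\rho_2\sim\rho_0$, i.e.\ $\rho_2\in\mathcal{G}(\rho_0)$, and $\int_\Omega\rho_2\,u\,dx=\int_\Omega(\rho_0^{*}\circ T)(u^{*}\circ T)\,dx=\int_0^{|\Omega|}\rho_0^{*}(s)u^{*}(s)\,ds$, the right-hand side of \eqref{day}. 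Replacing $\rho_0^{*}$ by its reflection $s\mapsto\rho_0^{*}(|\Omega|-s)$ — the increasing rearrangement of $\rho_0$, which still has distribution function $d_{\rho_0}$ — produces $\rho_1:=\rho_0^{*}(|\Omega|-\,\cdot\,)\circ T\in\mathcal{G}(\rho_0)$ with $\int_\Omega\rho_1\,u\,dx=\int_0^{|\Omega|}\rho_0^{*}(|\Omega|-s)u^{*}(s)\,ds$, the left-hand side.

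The step that requires the most care, and the one I would simply import from \cite{day70,ryff67} rather than check by hand, is the existence of the measure-preserving $T$ with $u=u^{*}\circ T$ when $u$ has level sets of positive measure: on each ``flat'' set $\{u=c\}$ one defines $T$ to be any measure-preserving bijection onto the corresponding subinterval, which does not affect $\rho_1,\rho_2$ since $\rho_0^{*}$ and its reflection are constant there. The signed-function bookkeeping in the inequality is routine by comparison once the shift/truncation reduction above is in place. Since the statement is \cite[Theorem~11.1]{day70} transcribed into the present notation, the write-up may simply invoke that theorem after recording $\rho^{*}=\rho_0^{*}$; the sketch above is the argument behind it.
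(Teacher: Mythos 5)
Your argument is correct. Note, however, that the paper does not prove this proposition at all: it is stated as \cite[Theorem 11.1]{day70} ``rephrased for our case'', so the only content of the paper's ``proof'' is the citation (the attainment statement is also the one used later via Burton's results in \cite{B}). What you supply is a self-contained reconstruction of the cited result, and it holds up: the reduction of the signed case by the shift-invariance of the defect $\int_\Omega fu\,dx-\int_0^{|\Omega|}f^*u^*\,ds$ together with the truncation $u_n=\max(u,-n)$ is legitimate (note $|u_n|\le|u|$, so dominated convergence applies on both sides with dominating functions $\|\rho_0\|_{L^\infty(\Omega)}|u|$ and $\|\rho_0\|_{L^\infty(\Omega)}|u^*|$); the layer-cake computation is the standard proof of Hardy--Littlewood, using that $\{\rho_0^*>t\}$ and $\{u^*>\tau\}$ are the intervals $(0,d_{\rho_0}(t))$ and $(0,d_u(\tau))$; and the lower bound via $(-u)^*(s)=-u^*(|\Omega|-s)$ a.e.\ is fine, since both sides are decreasing functions with the same distribution and hence agree off a countable set. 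For attainment, the one genuinely nontrivial ingredient you import --- a measure-preserving $T\colon\Omega\to(0,|\Omega|)$ with $u=u^*\circ T$ a.e. --- is exactly Ryff's theorem, valid on the nonatomic space $(\Omega,|\cdot|)$ with no restriction on flat level sets (your remark about spreading $\{u=c\}$ measure-preservingly over the corresponding interval is the right way to see this); given $T$, the verifications $\rho_2=\rho_0^*\circ T\in\mathcal{G}(\rho_0)$, $\rho_1=\rho_0^*(|\Omega|-\cdot)\circ T\in\mathcal{G}(\rho_0)$ and the change-of-variables identities for the integrals are exactly as you write. So your proposal proves strictly more than the paper records; if you want to match the paper's economy you can indeed just cite Day (or Burton \cite{B}), but as a proof sketch yours is complete modulo the explicitly acknowledged appeal to Ryff's theorem.
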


The previous proposition implies that the linear optimization problems
\begin{equation}\label{max}
 \sup_{\rho \in \mathcal{G}(\rho_0)} \int_\Omega \rho u \, dx\end{equation}
 and
 \begin{equation*}
 \inf_{\rho \in \mathcal{G}(\rho_0)} \int_\Omega \rho u \, dx\end{equation*}
 admit solution.

Finally, we recall the following result proved in \cite[Theorem 5]{B}.

\begin{proposition}\label{Teobart}
Let $u\in L^1(\Omega)$ and $\rho_0\in L^\infty(\Omega)$. If problem \eqref{max} has a unique solution 
$\rho_M$, 
then there exists an increasing function $\psi$ such that $\rho_M=\psi \circ u$ a.e. in 
$\Omega$.
\end{proposition}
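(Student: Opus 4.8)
The plan is to show that the unique solution $\rho_M$ of \eqref{max} must be ``monotonically arranged'' with respect to $u$ and constant on each level set of $u$ of positive measure, and then to read off $\psi$. The only structural input I would use, besides the hypothesis, is that Lebesgue measure on $\Omega$ is non-atomic: given measurable $A,B\subseteq\Omega$ with $|A|=|B|$ there is a measure-preserving involution $\tau\colon\Omega\to\Omega$ with $\tau(A)=B$ and $\tau=\mathrm{id}$ on $\Omega\setminus(A\cup B)$, and for any such $\tau$ one has $\rho_M\circ\tau\sim\rho_M\sim\rho_0$, hence $\rho_M\circ\tau\in\mathcal{G}(\rho_0)$; thus $\rho_M\circ\tau$ is always an admissible competitor in \eqref{max}. \emph{Step 1 (monotone dependence on $u$).} First I would prove that, for a.e.\ $(x,y)\in\Omega\times\Omega$, $u(x)<u(y)$ implies $\rho_M(x)\le\rho_M(y)$. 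If not, using $\{u(x)<u(y)\}=\bigcup_{q\in\mathbb{Q}}(\{u<q\}\times\{u>q\})$ and the analogous decomposition of $\{\rho_M(x)>\rho_M(y)\}$ over rationals $r$, one obtains rationals $q,r$ for which $A:=\{u<q\}\cap\{\rho_M>r\}$ and $B:=\{u>q\}\cap\{\rho_M<r\}$ both have positive measure (and are disjoint). Picking $A_0\subseteq A$, $B_0\subseteq B$ of equal positive measure and $\tau$ swapping them as above, a change of variables using $\tau^2=\mathrm{id}$ gives
\[
\int_\Omega(\rho_M\circ\tau)\,u\,dx-\int_\Omega\rho_M\,u\,dx=\int_{A_0}\big(\rho_M(\tau x)-\rho_M(x)\big)\big(u(x)-u(\tau x)\big)\,dx .
\]
On $A_0$ the first factor is negative (since $\rho_M>r$ on $A_0$ but $\rho_M\circ\tau<r$ there) and so is the second (since $u<q$ on $A_0$ but $u\circ\tau>q$), so the integral is strictly positive; hence $\rho_M\circ\tau$ beats $\rho_M$ in \eqref{max}, a contradiction. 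Note this step uses only maximality, not uniqueness.

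\emph{Step 2 (constancy on level sets of positive measure).} Next I would use uniqueness to show $\rho_M$ is a.e.\ constant on $E_c:=\{u=c\}$ whenever $|E_c|>0$. Otherwise pick $t$ with $0<|E_c\cap\{\rho_M>t\}|<|E_c|$, choose $P_0\subseteq E_c\cap\{\rho_M>t\}$ and $Q_0\subseteq E_c\cap\{\rho_M\le t\}$ of equal positive measure, and let $\tau$ swap them. Since $u\equiv c$ on $E_c$ and $\tau$ preserves measure on $E_c$, the value of \eqref{max} is unchanged: $\int_\Omega(\rho_M\circ\tau)u\,dx=\int_\Omega\rho_M u\,dx$. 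So $\rho_M\circ\tau$ is also a maximizer, forcing $\rho_M\circ\tau=\rho_M$ a.e.\ by uniqueness; but these two functions differ on $P_0$, a contradiction.

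\emph{Step 3 (building $\psi$).} Finally, I would set $\psi_-(c)=\esssup\{\rho_M(x):u(x)<c\}$ and $\psi_+(c)=\essinf\{\rho_M(x):u(x)>c\}$, both nondecreasing. Step 1 gives (via Fubini) $\psi_-(u(x))\le\rho_M(x)\le\psi_+(u(x))$ a.e., and also $\psi_+(c)\le\psi_-(c')$ for $c<c'$ with $|\{c<u<c'\}|>0$; hence the intervals $[\psi_-(c),\psi_+(c)]$, for $c$ in the essential range of $u$, form an increasing family and only countably many are nondegenerate. Off these countably many values $\psi_-=\psi_+=:\psi$ and $\rho_M=\psi\circ u$ there; among the exceptions, those $c$ with $|E_c|=0$ are negligible and those with $|E_c|>0$ are handled by Step 2 (set $\psi(c)$ equal to the constant value of $\rho_M$ on $E_c$, which lies in $[\psi_-(c),\psi_+(c)]$, so monotonicity is preserved). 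After extending $\psi$ monotonically off the range of $u$ we obtain an increasing (nondecreasing) $\psi$ with $\rho_M=\psi\circ u$ a.e.

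The main obstacle is Step 1: converting a hypothetical failure of monotone dependence into a genuine competitor in $\mathcal{G}(\rho_0)$ with strictly larger integral. The two-point measure-preserving swap, together with the sign bookkeeping in the displayed identity, is precisely what does this, and it is the mechanism underlying \cite[Theorem 5]{B}; Steps 2 and 3 are then routine consequences of uniqueness and of the elementary structure of monotone planar sets.
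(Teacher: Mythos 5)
Your proof is essentially correct, but it is worth noting that the paper does not prove this proposition at all: it simply quotes it as \cite[Theorem 5]{B} (Burton), whose argument runs through the structure of the weak* closed convex hull of $\mathcal{G}(\rho_0)$ and decreasing rearrangements. Your route is a self-contained, elementary one based on two-point measure-preserving transpositions: the swap construction is legitimate (any two disjoint sets of equal positive measure are measure-isomorphic mod null sets, and composing $\rho_M$ with such an involution preserves its distribution function, so the competitor stays in $\mathcal{G}(\rho_0)$), the sign bookkeeping in Step 1 is right, and the division of labour is exactly the correct one — monotone coupling of $\rho_M$ with $u$ uses only maximality, while uniqueness enters precisely to force constancy on level sets of $u$ of positive measure, which is where the conclusion would otherwise fail. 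What your approach buys is transparency about where uniqueness is used; what Burton's buys is that it comes packaged with the convexity/extreme-point theory the paper already invokes (Propositions \ref{convexity} and the duality bounds \eqref{day}). One point in Step 3 needs tightening: the countability of the essential-range values $c$ with $\psi_-(c)<\psi_+(c)$ does not follow solely from ``$\psi_+(c)\le\psi_-(c')$ whenever $c<c'$ and $|\{c<u<c'\}|>0$'', since two essential-range values with no $u$-mass strictly between them produce overlapping gap intervals. The fix stays inside your framework: if $c<c'$ are in the essential range and $|\{c<u<c'\}|=0$, then $\{u<c\}=\{u<c'\}$ and $\{u>c\}=\{u>c'\}$ up to null sets, so the two gaps $(\psi_-,\psi_+)$ coincide, and no third essential-range point can lie between $c$ and $c'$; hence each nondegenerate gap is shared by at most two values, distinct gaps are pairwise disjoint open intervals, and countability (hence nullity of the union of the corresponding null level sets) follows, after which your gluing across the atoms of $u$ via Step 2 and the monotone extension of $\psi$ go through. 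Finally, ``increasing'' here must be read as nondecreasing (as in Burton — the case of constant $\rho_0$ shows strict monotonicity cannot be expected), which is what your construction delivers.
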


\section{Existence of minimizers}\label{main}

Let $\rho_0\in L^\infty(\Omega)$, $\mathcal{G}(\rho_0)$
be the class of rearrangements of $\rho_0$ and $\lambda_k(\rho) $, $\rho \in
\mathcal{G}(\rho_0)$, be the $k$-th positive
eigenvalue of problem \eqref{p0}. In this section we investigate 
the optimization problem 
\begin{equation*}
\inf_{\rho \in \mathcal{G}(\rho_0)} \lambda_1(\rho),
\end{equation*}
which can be expressed in terms of the eigenvalue $\mu_{1}(\rho)$ of the operator $G_\rho$, defined in
\eqref{G}, as
\begin{equation*}
\sup_{\rho \in \mathcal{G}(\rho_0)} \mu_1(\rho).
\end{equation*}
Observe that, by Proposition \ref{segnorho}, 
$\mu_k(\rho)$ and $u_\rho$ (the positive first  eigenfuction of problem \eqref{p0}
normalized as in \eqref{normaliz1}) are well defined only when $|\{\rho>0\}|>0$. We extend them
to the whole space $L^\infty(\Omega)$ by putting
\begin{equation} \label{muk}
\widetilde{\mu}_k(\rho)=\begin{cases} \mu_k(\rho) \quad & \text{if } |\{\rho>0\}|>0\\
0 & \text{if } |\{\rho>0\}|=0\end{cases}
\end{equation}
and
\begin{equation}  \label{urho}
\widetilde{u}_\rho=\begin{cases}u_\rho \quad & \text{if } |\{\rho>0\}|>0\\
0 & \text{if } |\{\rho>0\}|=0.\end{cases}
\end{equation}

\begin{rem}\label{oss1}
Note that $\widetilde{\mu}_k(\rho)=0$ if and only if $\rho\leq 0$ a.e. in $\Omega$ and, in this circumstance,
the inequality
\begin{equation}\label{tilde}\sup_{F_k}
\min_{f\in F_k\atop f\neq 0}
\cfrac{\langle G_\rho f, f \rangle_{H^s_0(\Omega)} }{\|f\|^2_{H_0^{s}(\Omega)}} \,\leq 0\end{equation}
holds, where $F_k$ varies among all the $k$-dimensional subspaces of $H_0^s(\Omega)$.\\
Moreover,  from \eqref{homo}, we have $\widetilde{\mu}_1(\alpha\rho)=\alpha \widetilde{\mu}_1(\rho)$
for every $\alpha\geq 0$.
\end{rem}

\begin{theorem}\label{teo1}
Let $\rho\in L^\infty(\Omega)$,  $G_\rho$ be the linear operator \eqref{G}, $\widetilde{\mu}_k(\rho)$ 
as defined in \eqref{muk} for $k=1,2,3,\ldots$ and $\widetilde{u}_\rho$ as in \eqref{urho}. Then\\
i) the map $\rho\mapsto G_\rho$ is sequentially weakly* continuous from $L^\infty(\Omega)$
to $\mathcal{L}(H_0^s(\Omega),H_0^s(\Omega)) $ endowed with the norm topology;\\
ii) the map $\rho\mapsto \widetilde{\mu}_k(\rho)$ is sequentially weakly* continuous in
$L^\infty(\Omega)$; \\
iii) the map $\rho\mapsto \widetilde{\mu}_1(\rho)\widetilde{u}_\rho$ is sequentially weakly* continuous
from $L^\infty(\Omega)$ to $H_0^s(\Omega)$ (endowed with the norm topology). In particular, 
for any sequence $\{\rho_n\}$ weakly* convergent to $\eta\in L^\infty(\Omega)$, with $\widetilde{\mu}_1(\eta)>0$, then 
$\{\widetilde{u}_{\rho_n}\}$ converges to $\widetilde{u}_{\eta}$ in $H_0^s(\Omega)$.
\end{theorem}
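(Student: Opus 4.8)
The plan is to establish (i)--(iii) in this order; item (i) carries essentially all the work, while (ii) and (iii) then follow by standard perturbation and compactness arguments. For (i), the point is that a weak* convergent sequence in $L^\infty(\Omega)=(L^1(\Omega))^*$ converges uniformly on norm-compact subsets of $L^1(\Omega)$, and that $\|G_{\rho_n}-G_\rho\|$ is a supremum of precisely this kind. By \eqref{linearit}, $G_{\rho_n}-G_\rho=G_{\rho_n-\rho}$, and by \eqref{normadu} together with the definition of the embedding $j$, for $f\in H_0^s(\Omega)$,
\[
\|(G_{\rho_n}-G_\rho)f\|_{H_0^s(\Omega)}=\big\|j\big((\rho_n-\rho)f\big)\big\|_{H^{-s}(\Omega)}=\sup_{\|\varphi\|_{H_0^s(\Omega)}\le1}\Big|\int_\Omega(\rho_n-\rho)\,f\varphi\,dx\Big|,
\]
so that $\|G_{\rho_n}-G_\rho\|_{\mathcal{L}(H_0^s(\Omega),H_0^s(\Omega))}=\sup_{h\in\mathcal{K}}\big|\int_\Omega(\rho_n-\rho)\,h\,dx\big|$ with $\mathcal{K}=\{f\varphi:\ \|f\|_{H_0^s(\Omega)}\le1,\ \|\varphi\|_{H_0^s(\Omega)}\le1\}$. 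Since $i:H_0^s(\Omega)\hookrightarrow L^2(\Omega)$ is compact and multiplication $L^2(\Omega)\times L^2(\Omega)\to L^1(\Omega)$ is continuous (by the Cauchy--Schwarz inequality), $\mathcal{K}$ is relatively compact in $L^1(\Omega)$; as a weak* convergent sequence is bounded, $\rho_n-\rho\overset{*}{\rightharpoonup}0$, and covering $\overline{\mathcal{K}}$ by finitely many small balls and invoking Banach--Steinhaus gives $\sup_{h\in\overline{\mathcal{K}}}|\int_\Omega(\rho_n-\rho)h\,dx|\to0$, which is (i).

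For (ii), I would introduce $\nu_k(\rho):=\max_{\dim F_k=k}\min_{0\ne f\in F_k}\langle G_\rho f,f\rangle_{H_0^s(\Omega)}/\|f\|_{H_0^s(\Omega)}^2$ (the maximum running over $k$-dimensional subspaces of $H_0^s(\Omega)$). Two elementary facts then suffice: comparing Rayleigh quotients subspace by subspace gives $|\nu_k(\rho)-\nu_k(\eta)|\le\|G_\rho-G_\eta\|_{\mathcal{L}(H_0^s(\Omega),H_0^s(\Omega))}$; and $\widetilde\mu_k(\rho)=\max\{\nu_k(\rho),0\}$, because if $|\{\rho>0\}|>0$ then $\nu_k(\rho)=\mu_k(\rho)>0$ by Proposition \ref{segnorho} and \eqref{Fischer1}, whereas if $|\{\rho>0\}|=0$ then $\langle G_\rho f,f\rangle_{H_0^s(\Omega)}=\int_\Omega\rho f^2\,dx\le0$ forces $\nu_k(\rho)\le0$ and both sides vanish. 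Combining these with (i) and the continuity of $t\mapsto\max\{t,0\}$ yields (ii).

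For (iii), set $v_n=\widetilde\mu_1(\rho_n)\widetilde u_{\rho_n}$; then $\|v_n\|_{H_0^s(\Omega)}=\widetilde\mu_1(\rho_n)$ by \eqref{normaliz1}, \eqref{muk}, \eqref{urho}, hence $\|v_n\|_{H_0^s(\Omega)}\to\widetilde\mu_1(\eta)$ by (ii), and moreover $v_n\ge0$ a.e.\ and $G_{\rho_n}v_n=\widetilde\mu_1(\rho_n)v_n$ for every $n$. If $\widetilde\mu_1(\eta)=0$ this forces $v_n\to0=\widetilde\mu_1(\eta)\widetilde u_\eta$. If $\widetilde\mu_1(\eta)>0$, then $\{v_n\}$ is bounded in $H_0^s(\Omega)$; along any subsequence extract a further one with $v_n\rightharpoonup v$ in $H_0^s(\Omega)$, so that, by (i), the boundedness of $\{v_n\}$, and the compactness of $G_\eta$, one has $G_{\rho_n}v_n\to G_\eta v$ strongly, while $G_{\rho_n}v_n=\widetilde\mu_1(\rho_n)v_n\rightharpoonup\widetilde\mu_1(\eta)v$. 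Therefore $G_\eta v=\widetilde\mu_1(\eta)v$ and $\widetilde\mu_1(\rho_n)v_n\to\widetilde\mu_1(\eta)v$ strongly, whence $v_n\to v$ strongly with $\|v\|_{H_0^s(\Omega)}=\widetilde\mu_1(\eta)>0$. Thus $v$ is a first eigenfunction of $G_\eta$ and, by simplicity of $\mu_1(\eta)$, $v=c\,u_\eta$ with $|c|=\widetilde\mu_1(\eta)$; passing $v_n\ge0$ to the limit in $L^2(\Omega)$ gives $v\ge0$ a.e., so $c>0$ and $v=\widetilde\mu_1(\eta)\widetilde u_\eta$. As the limit is independent of the subsequence, $v_n\to\widetilde\mu_1(\eta)\widetilde u_\eta$ in $H_0^s(\Omega)$; dividing by $\widetilde\mu_1(\rho_n)$ for $n$ large then gives $\widetilde u_{\rho_n}\to\widetilde u_\eta$.

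The step I expect to be the main obstacle is (i): one has to recognize that upgrading the weak* convergence of $\rho_n$ to the norm convergence of $G_{\rho_n}$ is precisely the uniform convergence of weak* limits on compact sets, and that the right compact set is the set of products of two elements of the compactly embedded unit ball of $H_0^s(\Omega)$. After that, (ii) and (iii) are routine, the only delicate point being the sign normalization of the eigenfunctions in (iii).
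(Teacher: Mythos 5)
Your proof is correct, but it takes a genuinely different route from the paper, most notably in part (i). There you write $G_{\rho_n}-G_\rho=G_{\rho_n-\rho}$ via \eqref{linearit}, use the isometry \eqref{normadu} to express the operator norm as $\sup_{h\in\mathcal{K}}\bigl|\int_\Omega(\rho_n-\rho)h\,dx\bigr|$ with $\mathcal{K}=\{f\varphi:\|f\|_{H_0^s(\Omega)}\le1,\ \|\varphi\|_{H_0^s(\Omega)}\le1\}$, note that $\mathcal{K}$ is relatively compact in $L^1(\Omega)$ (compactness of $H_0^s(\Omega)\hookrightarrow L^2(\Omega)$ plus Cauchy--Schwarz), and conclude because a bounded weak* null sequence in $L^\infty(\Omega)=(L^1(\Omega))^*$ converges uniformly on norm-compact subsets of $L^1(\Omega)$; the paper instead first proves $G_{\rho_n}(f)\to G_\rho(f)$ for each fixed $f$ (via compactness of $L^2(\Omega)\hookrightarrow H^{-s}(\Omega)$), then picks near-maximizers $f_n$ of the operator norm, uses compactness of $G_{\rho_n}$, $G_\rho$ and of $H_0^s(\Omega)\hookrightarrow L^2(\Omega)$ to extract limits, and splits $\|G_{\rho_n}-G_\rho\|\le\|G_{\rho_n}(f)-G_\rho(f)\|+2CM\|f_n-f\|_{L^2(\Omega)}$. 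Your version is shorter and isolates the abstract mechanism; the paper's is more hands-on but needs the subsequence bookkeeping. Part (ii) is essentially the paper's argument repackaged: your Lipschitz bound for $\nu_k$ together with $\widetilde\mu_k=\max\{\nu_k,0\}$ (justified by Proposition \ref{segnorho} and \eqref{tilde}) reproduces the three-case estimate \eqref{modulo}; only note that when $|\{\rho>0\}|=0$ the outer extremum defining $\nu_k$ is a supremum, not necessarily attained, which does not affect the estimate. In (iii) you again deviate: instead of passing to the limit in the weak formulation of the eigenvalue equation and using the normalization \eqref{normaliz2} to identify the limit (as the paper does), you treat $v_n=\widetilde\mu_1(\rho_n)\widetilde u_{\rho_n}$ as eigenvectors of $G_{\rho_n}$, obtain strong convergence from (i) combined with compactness of $G_\eta$, and identify the limit through simplicity and one-signedness of the first eigenfunction; this leans more heavily on (i) but avoids the explicit computation with test functions, and both arguments are sound.
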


\begin{proof}
i) Let $\{\rho_n\}$ be a sequence which weakly* converges to $\rho$ in $L^\infty(\Omega)$. 
Being $\{\rho_n\}$ bounded in $L^\infty(\Omega)$, there exists a constant $M>0$ such that
\begin{equation}\label{M}|\rho|\leq M\quad \text{ and } \quad |\rho_n|\leq M
\quad \forall\, n.\end{equation}
We begin by proving that 
$G_{\rho_n}(f)$ tends to $G_\rho(f)$ in $H^s_0(\Omega)$ for any fixed $f\in H^s_0(\Omega)$. 
Note that the sequence $\{\rho_n f\}$ is weakly convergent to $\rho f$ in $L^2(\Omega)$, then,
exploiting the compactness of the embedding  $L^2(\Omega)\hookrightarrow H^{-s}(\Omega)$, we conclude that this convergence is also strong in $H^{-s}(\Omega)$. Then
\begin{align*}
\|G_{\rho_n}(f) - G_\rho (f)\|_{H_0^s(\Omega)}& =\|G(\rho_n f-\rho f)\|_{H_0^s(\Omega)} 
\\&\leq \|G\|_{\mathcal{L}
(H^{-s}(\Omega), H_0^{s}(\Omega))} \|\rho_n f-\rho f\|_{H^{-s}(\Omega)}=  \|\rho_n f-\rho f\|_{H^{-s}(\Omega)},
\end{align*}
where we used $G_\rho(f)= G(\rho f)$, with $G$ defined by \eqref{operatore},
 and \eqref{normaduG}.
Therefore $G_{\rho_n}(f)$ converges to $ G_\rho(f)$ in $H_0^s(\Omega)$.

Now, for fixed $n$, let $\{f_{n,k}\}$, $k=1,2, 3,\ldots$, be a maximizing sequence of
$$\sup_{g\in H_0^s(\Omega)\atop \|g\|_{H_0^s(\Omega)}\leq 1}
 \|G_{\rho_n}(g) - G_\rho(g)\|_{H_0^s(\Omega)}=\|G_{\rho_n}-G_\rho\|_{\mathcal{L}(H_0^s(\Omega), H_0^s(\Omega))} .$$
Then, being $\|f_{n,k}\|_{H_0^s(\Omega)}\leq 1$, we can extract a subsequence (still denoted by $\{f_{n,k}\}$)
weakly convergent to some $f_n\in H_0^s(\Omega)$. Since 
$G_{\rho_n}$ and $G_\rho$ are compact operators (see Proposition \ref{self}), it follows that
$G_{\rho_n}(f_{n,k})$ converges to $G_{\rho_n}(f_{n})$ and  
$G_{\rho}(f_{n,k})$ converges to $G_{\rho}(f_{n})$ strongly in $H_0^s(\Omega)$ as $k$ goes to
$\infty$. Thus we find
$$ \|G_{\rho_n}-G_\rho\|_{\mathcal{L}(H_0^s(\Omega), H_0^s(\Omega))}
=\|G_{\rho_n}(f_n) - G_\rho(f_n)\|_{H_0^s(\Omega)}.$$
This procedure yields a sequence $\{f_n\}$ in $H_0^s(\Omega)$ such that $\|f_{n}\|_{H_0^s(\Omega)}\leq 1$ for all $n$. Then, up to a subsequence, we can assume that
$\{f_n\}$ weakly converges  to a function $f\in H_0^s(\Omega)$ 
and (by compactness of the embedding $H_0^{s}(\Omega)\hookrightarrow L^2(\Omega)$) strongly in $L^2(\Omega)$. 
By using \eqref{j}, \eqref{normaduG} and \eqref{M} we find
\begin{align*} & \|G_{\rho_n}- G_\rho\|_{\mathcal{L}(H_0^s(\Omega), H_0^s(\Omega))}=
\|G_{\rho_n}(f_n) - G_\rho(f_n)\|_{H_0^s(\Omega)}\\
& \leq \|G_{\rho_n}(f) - G_\rho(f)\|_{H_0^s(\Omega)} + \|G_{\rho_n}(f_n-f) - G_\rho(f_n-f)\|_{H_0^s(\Omega)}\\
&=\|G_{\rho_n}(f) - G_\rho(f)\|_{H_0^s(\Omega)}+ \|G(\rho_n(f_n-f)-\rho(f_n-f))\|_{H_0^s(\Omega)}\\
&\leq \|G_{\rho_n}(f) - G_\rho(f)\|_{H_0^s(\Omega)}+ \|G\|_{\mathcal{L}(H^{-s}(\Omega), H_0^s(\Omega))}
\left(\|\rho_n(f_n-f)\|_{H^{-s}(\Omega)} +\|\rho(f_n-f))\|_{H^{-s}(\Omega)} \right)\\
&\leq \|G_{\rho_n}(f) - G_\rho(f)\|_{H_0^s(\Omega)} +
 2C M \|f_n-f\|_{L^2(\Omega)}. \end{align*}
Therefore $G_{\rho_n}$ converges to $G_\rho$ in the operator norm. \\

ii) If we show that, for any $k=1, 2, 3, \ldots$ and $\rho, \eta\in L^\infty(\Omega)$ the estimates
\begin{equation}\label{modulo} |\widetilde{\mu}_k(\rho)- \widetilde{\mu}_k(\eta)|\leq  \|G_{\rho}- G_\eta\|_{\mathcal{L}(H_0^s(\Omega), H_0^s(\Omega))}
\end{equation}
hold, then the claim follows immediately from i).\\
We split the argument in three cases.

Case 1.  $\widetilde{\mu}_k(\rho), \ \widetilde{\mu}_k(\eta)>0$.\\ Following \cite[Theorem 2.3.1]{He} and by means of the Fischer's Principle \eqref{Fischer1} we have
\begin{equation*} \begin{split}
\widetilde{\mu}_k(\rho)- \widetilde{\mu}_k(\eta) &=  \max_{F_k}
\min_{f\in F_k\atop f\neq 0}
\cfrac{\langle G_\rho f, f \rangle_{H^s_0(\Omega)} }{\|f\|^2_{H_0^{s}(\Omega)}} \,
- \max_{F_k}
\min_{f\in F_k\atop f\neq 0}
\cfrac{\langle G_\eta f, f \rangle_{H^s_0(\Omega)} }{\|f\|^2_{H_0^{s}(\Omega)}} \,\\
& \leq 
\min_{f\in F_k(\rho)\atop f\neq 0}
\cfrac{\langle G_\rho f, f \rangle_{H^s_0(\Omega)} }{\|f\|^2_{H_0^{s}(\Omega)}} \,
- 
\min_{f\in F_k(\rho)\atop f\neq 0}
\cfrac{\langle G_\eta f, f \rangle_{H^s_0(\Omega)} }{\|f\|^2_{H_0^{s}(\Omega)}} \,\\
& \leq 
\cfrac{\langle G_\rho f_\eta, f_\eta \rangle_{H^s_0(\Omega)} }{\|f_\eta\|^2_{H_0^{s}(\Omega)}} \,-
\cfrac{\langle G_\eta f_\eta, f_\eta \rangle_{H^s_0(\Omega)} }{\|f_\eta\|^2_{H_0^{s}(\Omega)}} \,\\
& = 
\cfrac{\langle (G_\rho-G_\eta)f_\eta, f_\eta \rangle_{H^s_0(\Omega)} }
{\|f_\eta\|^2_{H_0^{s}(\Omega)}} \,\leq  
 \|G_{\rho}- G_\eta\|_{\mathcal{L}(H_0^s(\Omega), H_0^s(\Omega))},
\end{split} 
\end{equation*}
where $F_k(\rho)$ is a $k$-dimensional subspace of $H^s_0(\Omega)$ such that
$$\max_{F_k}
\min_{f\in F_k\atop f\neq 0}
\cfrac{\langle G_\rho f, f \rangle_{H^s_0(\Omega)} }{\|f\|^2_{H_0^{s}(\Omega)}} \, =\min_{f\in F_k(\rho)\atop f\neq 0}
\cfrac{\langle G_\rho f, f \rangle_{H^s_0(\Omega)} }{\|f\|^2_{H_0^{s}(\Omega)}} \,$$
and $f_\eta$ is a function in $F_k(\rho)$ such that
$$ \min_{f\in F_k(\rho)\atop f\neq 0}
\cfrac{\langle G_\eta f, f \rangle_{H^s_0(\Omega)} }{\|f\|^2_{H_0^{s}(\Omega)}} \,=
\cfrac{\langle G_\eta f_\eta, f_\eta \rangle_{H^s_0(\Omega)} }
{\|f_\eta\|^2_{H_0^{s}(\Omega)}} \,.
$$
Interchanging the role of $\rho$ and $\eta$ we find \eqref{modulo}.\\

Case 2. $\widetilde{\mu}_k(\rho)>0$, $ \widetilde{\mu}_k(\eta)=0$ (and similarly in the case $\widetilde{\mu}_k(\eta)>0$, $ \widetilde{\mu}_k(\rho)=0$).\\
Note that in this case \eqref{tilde} holds for the weight function $\eta$. Then
the previous argument still applies provided that we replace the first step of the inequality chain by 
\begin{equation*} |\widetilde{\mu}_k(\rho)- \widetilde{\mu}_k(\eta) |=
\widetilde{\mu}_k(\rho) \leq   \max_{F_k}
\min_{f\in F_k\atop f\neq 0}
\cfrac{\langle G_\rho f, f \rangle_{H^s_0(\Omega)} }{\|f\|^2_{H_0^{s}(\Omega)}} \,
- \sup_{F_k}
\min_{f\in F_k\atop f\neq 0}
\cfrac{\langle G_\eta f, f \rangle_{H^s_0(\Omega)} }{\|f\|^2_{H_0^{s}(\Omega)}} \,.\end{equation*}

Case 3. $\widetilde{\mu}_k(\rho)= \widetilde{\mu}_k(\eta)=0$.\\
In this case \eqref{modulo} is obvious.\\

Therefore statement ii) is proved.\\

iii) Let $\{\rho_n\}, \rho$ be such that $\rho_n$ is weakly$^*$ convergent
to $\rho$ in $L^\infty(\Omega)$. Being 
$\|\widetilde{u}_{\rho_n}\|_{H_0^s(\Omega)}\leq1$,
up to a subsequence we can assume that $\widetilde{u}_{\rho_n}$ is weakly convergent to $z
\in H_0^s(\Omega)$, strongly in $L^2(\Omega)$ and pointwisely a.e. in $\Omega$. \\
First suppose $\widetilde{\mu}_1(\rho)=0$. Then, by ii) $\widetilde{\mu}_1(\rho_n) \widetilde{u}_{\rho_n}$
weakly converges in $H_0^s(\Omega)$ to $\widetilde{\mu}_1(\rho)z=0= \widetilde{\mu}_1(\rho) \widetilde{u}_{\rho}$. Moreover, $\|\widetilde{\mu}_1(\rho_n) \widetilde{u}_{\rho_n}\|_{H_0^s(\Omega)}
=\widetilde{\mu}_1(\rho_n)\| \widetilde{u}_{\rho_n}\|_{H_0^s(\Omega)}$ tends to $0=\|\widetilde{\mu}_1(\rho) \widetilde{u}_{\rho}\|_{H_0^s(\Omega)}$. Therefore $\widetilde{\mu}_1(\rho_n) \widetilde{u}_{\rho_n}$
strongly converges to $\widetilde{\mu}_1(\rho) \widetilde{u}_{\rho}$ in $H_0^s(\Omega)$.\\
Next, consider the case $\widetilde{\mu}_1(\rho)>0$. By ii) we have $\widetilde{\mu}_1(\rho_n)>0$
for all $n$ large enough. This implies  $\widetilde{\mu}_1(\rho_n)
=\frac{1}{\lambda_1(\rho_n)}\, $ and $\widetilde{u}_{\rho_n}= u_{\rho_n}$. Positiveness and pointwise convergence of $u_{\rho_n}$ to $z$ imply $z\geq 0$ a.e. in $\Omega$.
Moreover, by \eqref{normaliz2} we have
$$ \int_\Omega \rho_n u^2_{\rho_n} \, dx =\cfrac{1}{\lambda_1(\rho_n)}\, $$
and by ii), passing to the limit, we find 
$$ \int_\Omega \rho z^2 \, dx =\cfrac{1}{\lambda_1(\rho)}\, ,$$
which implies $z\neq 0$. By using the weak form of problem \eqref{p0} for $u_{\rho_n}$
we have
\begin{equation*}\langle u_{\rho_n}, \varphi\rangle_{H_0^s(\Omega)} = 
\lambda_1(\rho_n) \langle \rho_n u_{\rho_n}, \varphi\rangle_{L^2(\Omega)} = \lambda_1(\rho_n)
\int_\Omega  \rho_n u_{\rho_n} \varphi \, dx \quad \forall\, \varphi \in H_0^s(\Omega)
\end{equation*} and, letting $n$ to infinity, we deduce $z=u_\rho$.\\
By ii) $\mu_1(\rho_n) u_{\rho_n}$
weakly converges in $H_0^s(\Omega)$ to $\mu_1(\rho)u_{\rho}$ and 
 $\|\mu_1(\rho_n)u_{\rho_n}\|_{H_0^s(\Omega)}
=\mu_1(\rho_n)$ tends to $\mu_1(\rho) =\|\mu_1(\rho) u_{\rho}\|_{H_0^s(\Omega)}$.
Hence $\mu_1(\rho_n) u_{\rho_n}$
strongly converges to $\mu_1(\rho)u_{\rho}$ in $H_0^s(\Omega)$.\\
The last claim is immediate provided one observes that $\widetilde{\mu}_1(\eta)>0$ 
implies $\widetilde{\mu}_1(\rho_n)>0$ for all $n$ large enough.
\end{proof}

\begin{theorem}\label{teo2} Let $\rho, \eta, \rho_0\in L^\infty(\Omega)$,  
$\widetilde{\mu}_1(\rho)$ be defined as in \eqref{muk} for $k=1$ and $\overline{\mathcal{G}(\rho_0)}$ the weak* closure in $L^\infty(\Omega)$ of the class of rearrangement $\mathcal{G}(\rho_0)$ introduced in 
Definition \ref{class}. Then\\
i) the map $\rho\mapsto \widetilde{\mu}_1(\rho)$ 
 is convex on $L^\infty(\Omega) $; \\
ii) if $\rho$ and  $\eta$ are linearly indipendent and $ \widetilde{\mu}_1(\rho),  \widetilde{\mu}_1(\eta)>0$, then 
\begin{equation*} \widetilde{\mu}_1(t\rho+(1-t)\mu)< t \widetilde{\mu}_1(\rho)+(1-t)  \widetilde{\mu}_1(\eta)
\end{equation*}
for all $0<t<1$;\\
iii)
if $\int_\Omega \rho_0 \, dx >0$, then the map 
$\rho\mapsto \widetilde{ \mu}_1(\rho)$ 
 is strictly convex on $\overline{\mathcal{G}(\rho_0)}\, $.
\end{theorem}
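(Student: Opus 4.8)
The plan is to read off all three claims from the variational formula \eqref{mu1} together with the extension \eqref{muk}. For i), I would first record the representation, valid for every $\rho\in L^\infty(\Omega)$,
\begin{equation*}
\widetilde\mu_1(\rho)=\sup_{f\in H_0^s(\Omega)\setminus\{0\}}\ \max\left\{0,\ \frac{\int_\Omega\rho f^2\,dx}{\|f\|_{H_0^s(\Omega)}^2}\right\}.
\end{equation*}
Indeed, when $|\{\rho>0\}|>0$ the inner maximum is attained at a first eigenfunction and equals $\mu_1(\rho)>0$, whereas when $\rho\le 0$ a.e. every Rayleigh quotient is $\le 0$ and the supremum is $0$, so in both cases the right-hand side agrees with \eqref{muk}. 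For each fixed $f\neq 0$ the map $\rho\mapsto\int_\Omega\rho f^2\,dx/\|f\|_{H_0^s(\Omega)}^2$ is linear, hence $\rho\mapsto\max\{0,\cdot\}$ is convex; a pointwise supremum of convex functions is convex, and the supremum is finite because $\widetilde\mu_1(\rho)\le C^2\|\rho\|_{L^\infty(\Omega)}$. This proves i).

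For ii), put $\sigma=t\rho+(1-t)\eta$. If $\widetilde\mu_1(\sigma)=0$ the strict inequality is immediate since $t\widetilde\mu_1(\rho)+(1-t)\widetilde\mu_1(\eta)>0$, so assume $\widetilde\mu_1(\sigma)=\mu_1(\sigma)>0$ and let $u=u_\sigma$, so that $\|u\|_{H_0^s(\Omega)}=1$. By \eqref{normaliz2}, linearity of the integral and \eqref{mu1} applied to $\rho$ and to $\eta$,
\begin{equation*}
\mu_1(\sigma)=\int_\Omega\sigma u^2\,dx=t\int_\Omega\rho u^2\,dx+(1-t)\int_\Omega\eta u^2\,dx\le t\,\mu_1(\rho)+(1-t)\,\mu_1(\eta).
\end{equation*}
If equality holds, then $\int_\Omega\rho u^2\,dx=\mu_1(\rho)$ and $\int_\Omega\eta u^2\,dx=\mu_1(\eta)$, so $u$ realizes the maximum in \eqref{mu1} for both weights and is therefore, by the characterization following \eqref{2a}, a first eigenfunction for $\rho$ and for $\eta$; then \eqref{rana} gives
\begin{equation*}
\langle u,\varphi\rangle_{H_0^s(\Omega)}=\lambda_1(\rho)\int_\Omega\rho u\varphi\,dx=\lambda_1(\eta)\int_\Omega\eta u\varphi\,dx\qquad\forall\,\varphi\in H_0^s(\Omega),
\end{equation*}
and density of $H_0^s(\Omega)$ in $L^2(\Omega)$ forces $(\lambda_1(\rho)\rho-\lambda_1(\eta)\eta)\,u=0$ a.e. in $\Omega$. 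Since the first eigenfunction does not change sign and is nonzero a.e. in $\Omega$ (simplicity and one-sign results of \cite{FI}), this gives $\lambda_1(\rho)\rho=\lambda_1(\eta)\eta$ a.e., contradicting the linear independence of $\rho$ and $\eta$. Hence the inequality is strict.

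Finally, iii) will follow from i) and ii). The key remark is that every $\rho\in\overline{\mathcal{G}(\rho_0)}$ satisfies $\rho\prec\rho_0$ by Proposition \ref{convexity}, whence $\int_\Omega\rho\,dx=\int_0^{|\Omega|}\rho^*\,ds=\int_0^{|\Omega|}\rho_0^*\,ds=\int_\Omega\rho_0\,dx>0$ by Definition \ref{prec1} and Proposition \ref{furbi}; in particular $|\{\rho>0\}|>0$, so $\widetilde\mu_1(\rho)=\mu_1(\rho)>0$ throughout $\overline{\mathcal{G}(\rho_0)}$. Moreover, two distinct elements $\rho\neq\eta$ of $\overline{\mathcal{G}(\rho_0)}$ are automatically linearly independent: a relation $\eta=c\rho$ would give, upon integration, $c\int_\Omega\rho_0\,dx=\int_\Omega\rho_0\,dx$, hence $c=1$ and $\eta=\rho$. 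Thus ii) applies to every such pair and yields strict convexity on $\overline{\mathcal{G}(\rho_0)}$. The step I expect to be the main obstacle is the passage in ii) from the equality case to $\lambda_1(\rho)\rho=\lambda_1(\eta)\eta$ a.e.: beyond the variational characterization of first eigenfunctions, it rests crucially on the (a.e.) strict sign of the first eigenfunction of the nonlocal operator, without which one could only conclude the proportionality on $\{u\neq 0\}$.
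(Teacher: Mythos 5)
Your proof is correct and follows essentially the paper's own route: convexity via the pointwise supremum of the (linear-in-$\rho$) Rayleigh quotients, the strict inequality in ii) via the equality case forcing $u_{t\rho+(1-t)\eta}$ to be a first eigenfunction for both weights and then subtracting the weak formulations (using that the first eigenfunction is nonvanishing a.e.), and iii) from the invariance of $\int_\Omega\rho\,dx$ on $\overline{\mathcal{G}(\rho_0)}$ giving positivity of $\widetilde{\mu}_1$ and linear independence of distinct elements. The only presentational difference is that you bypass the paper's explicit appeal to simplicity of $\lambda_1$ by arguing directly from the variational characterization, which is legitimate within the paper's framework.
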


\begin{proof}

i) The Fischer's Principle \eqref{Fischer1} and \eqref{tilde} both for $k=1$ yield
\begin{equation}\label{basta}
\sup_{f\in H_0^s(\Omega) \atop f\neq 0}
\cfrac{\langle G_\rho f, f \rangle_{H^s_0(\Omega)} }{\|f\|^2_{H_0^{s}(\Omega)}} \,\leq \widetilde{ \mu}_1(\rho)
\end{equation}
for every $\rho \in L^\infty(\Omega)$. Moreover, if $\widetilde{ \mu}_1(\rho)>0$, then equality sign holds and
the supremum is attained when $f$ is an eigenfunction of $\mu_1(\rho)=\widetilde{\mu}_1(\rho)$.
Let $\rho, \mu\in L^\infty(\Omega)$, $0\leq t\leq 1$. We show that
\begin{equation}\label{conv}
\widetilde{\mu}_1(t\rho + (1-t)\eta)\leq t \widetilde{\mu}_1 (\rho) +(1-t) \widetilde{\mu}_1 (\eta).
\end{equation}
If $\widetilde{\mu}_1(t\rho + (1-t)\eta)=0$ \eqref{conv} is obvious. Suppose 
$\widetilde{\mu}_1(t\rho + (1-t)\eta)>0$. Then, for all $f\in H_0^s(\Omega)$ we have
\begin{equation} \label{A1} \cfrac{\langle G_{t\rho+(1-t)\eta} f, f \rangle_{H^s_0(\Omega)} }{\|f\|^2_{H_0^{s}(\Omega)}} 
\,=t\,  \cfrac{\langle G_{\rho} f, f \rangle_{H^s_0(\Omega)} }{\|f\|^2_{H_0^{s}(\Omega)}}\, 
+(1-t)\,  \cfrac{\langle G_{\eta} f, f \rangle_{H^s_0(\Omega)} }{\|f\|^2_{H_0^{s}(\Omega)}} \,
\leq t \widetilde{\mu}_1 (\rho) +(1-t) \widetilde{\mu}_1 (\eta),\end{equation}
where we used \eqref{linearit} and \eqref{basta}. Taking the supremum in the left-hand term
and using \eqref{basta} again with equality sign
we find \eqref{conv}.\\

ii) Arguing by contradiction, we suppose that 
equality holds in \eqref{conv}. We find out that $\rho$ and $\mu$ are linearly dependent. 
Equality sign in \eqref{conv} implies 
 $\widetilde{\mu}_1(t\rho + (1-t)\eta)>0$, then (by \eqref{basta}) the equality also holds in \eqref{A1} with 
$f=u=u_{t\rho+(1-t)\eta }$.  We get
$$ \cfrac{\langle G_{\rho} u , u \rangle_{H^s_0(\Omega)}} {\|u\|^2_{H_0^{s}(\Omega)}}\, =\widetilde{\mu}_1(\rho) \,\quad
 \text{ and } \quad \cfrac{\langle G_{\eta} u, u
 \rangle_{H^s_0(\Omega)} }{\|u\|^2_{H_0^{s}(\Omega)}}\, 
 =\widetilde{\mu}_1(\eta).$$
 The simplicity 
of the first eigenvalue, the positiveness of $u$ and the normalization \eqref{normaliz1} imply that 
$u=u_\rho=u_\eta$.
Writing the problem \eqref{p0} in weak form for both weigths $\rho$ and $\eta$ we have
$$ \langle u, \varphi \rangle_{H_0^{s}(\Omega)} =\cfrac{1}{ \widetilde{\mu}_1(\rho)}\, 
\langle \rho u, \varphi\rangle_{L^2(\Omega)} \quad \forall\, \varphi \in H_0^{s}(\Omega) 
$$
and
$$\langle u, \varphi\rangle_{H_0^{s}(\Omega)} =  \cfrac{1}{ \widetilde{\mu}_1(\eta)}\, 
\langle \eta u, \varphi\rangle_{L^2(\Omega)} \quad \forall\, \varphi\in H_0^{s}(\Omega) .$$
Taking the difference of these identities we find
$$\left\langle \left( \cfrac{\rho}{ \widetilde{\mu}_1(\rho)}\, -\cfrac{\eta}{ \widetilde{\mu}_1(\eta)}\, \right) u, \varphi\right\rangle_{L^2(\Omega)} =0 \quad \forall\, \varphi\in H_0^{s}(\Omega), $$
which gives $\rho\widetilde{\mu}_1(\eta)-\eta\widetilde{\mu}_1(\rho)=0$, i.e. 
 $\rho$ and $\eta$ are linearly dependent.\\

iii) First, note that $\int_\Omega \rho\, dx=\int_\Omega \rho_0\, dx>0$ 
 for any $\rho \in \overline{\mathcal{G}(\rho_0)}$. This follows easily by i) of Proposition \ref{convexity}, Definition \ref{prec1} and Proposition \ref{furbi}. Therefore, we have $|\{\rho >0\}|>0$ and thus
 $\widetilde{\mu}_1 (\rho)>0$
for all $\rho \in \overline{\mathcal{G}(\rho_0)}$. Next, we show that any distinct functions $\rho$ and $\eta$ in
$\overline{\mathcal{G}(\rho_0)}$ are linearly independent. Indeed, let $\alpha \rho +\beta \eta =0$
with $\alpha, \beta\in \mathbb{R}$. Integrating over $\Omega$ we obtain
$(\alpha+\beta)\int_\Omega \rho_0\, dx=0$, which implies $\beta=-\alpha$ and, in turn, $\alpha(\rho - \eta)=0$
and $\alpha =0$. Hence, $\rho$ and $\eta$ are linearly independent. The statement
is now an immediate consequence of ii)
\end{proof}

\begin{rem} If $\int_\Omega \rho_0\, dx=0$, $\rho_0\neq 0$, the map $\rho\mapsto \widetilde{\mu}_1(\rho)$ is not
strictly convex on $\overline{\mathcal{G}(\rho_0)}$. In fact, in this case (by Proposition \ref{prec2}) the null function 
belongs to $\overline{\mathcal{G}(\rho_0)}$. By convexity of $\overline{\mathcal{G}(\rho_0)}$ 
(see Proposition \ref{convexity}), $\alpha\rho_0\in \overline{\mathcal{G}(\rho_0)}$ for every $\alpha\in [0,1]$ and, by Remark \ref{oss1}, we have 
$\widetilde{\mu}_1(\alpha\rho_0)=\alpha \widetilde{\mu}_1(\rho_0)$, which excludes strict convexity.
\end{rem}

For the definitions and some basic results on the G\^ateaux differentiability
we refer the reader to \cite{ET}.

\begin{theorem}\label{teo3} 
Let $\rho\in L^\infty(\Omega)$,  $\widetilde{\mu}_1(\rho)$ be
 defined as in \eqref{muk} for $k=1$ and $u_\rho$ denote the first positive 
eigenfunction of problem \eqref{p0} normalized as in \eqref{normaliz1}.
The map $\rho\mapsto  \widetilde{\mu}_1(\rho)$ 
is G\^ateaux differentiable at any $\rho$ such that $ \widetilde{\mu}_1(\rho)>0$
with G\^ateaux differential equal to $u_\rho^2$. In other words,
 for every direction $v\in L^\infty(\Omega)$
  we have 
\begin{equation}\label{gatto} \widetilde{\mu}_1'(\rho; v) =\int_\Omega u_\rho^2 v\, dx. \end{equation}
\end{theorem}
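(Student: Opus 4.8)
The plan is to compute the directional derivative of $\widetilde{\mu}_1$ at a weight $\rho$ with $\widetilde{\mu}_1(\rho)>0$ by exploiting the variational characterization \eqref{mu1} together with the weak* continuity results from Theorem \ref{teo1}. Fix $v\in L^\infty(\Omega)$ and, for $t\in\mathbb{R}$ small, set $\rho_t=\rho+tv$. Since $\widetilde{\mu}_1(\rho)>0$, continuity (Theorem \ref{teo1} ii), applied along the sequence $t\to 0$, or directly via the Lipschitz estimate \eqref{modulo}) gives $\widetilde{\mu}_1(\rho_t)>0$ for $|t|$ small, so $\widetilde{\mu}_1(\rho_t)=\mu_1(\rho_t)$ and the normalized first eigenfunction $u_{\rho_t}$ is well defined.

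First I would obtain the two one-sided estimates. Using $u_\rho$ as a test function in \eqref{mu1} for the weight $\rho_t$,
\begin{equation*}
\widetilde{\mu}_1(\rho_t)\geq \int_\Omega (\rho+tv)u_\rho^2\,dx = \widetilde{\mu}_1(\rho)+t\int_\Omega v\,u_\rho^2\,dx,
\end{equation*}
where I used $\|u_\rho\|_{H_0^s(\Omega)}=1$ and \eqref{normaliz2} together with \eqref{normaliz1} to identify $\int_\Omega \rho u_\rho^2\,dx=\mu_1(\rho)=\widetilde{\mu}_1(\rho)$. Symmetrically, testing \eqref{mu1} for the weight $\rho$ with the function $u_{\rho_t}$ gives
\begin{equation*}
\widetilde{\mu}_1(\rho)\geq \int_\Omega \rho\, u_{\rho_t}^2\,dx = \int_\Omega(\rho+tv)u_{\rho_t}^2\,dx - t\int_\Omega v\,u_{\rho_t}^2\,dx = \widetilde{\mu}_1(\rho_t) - t\int_\Omega v\,u_{\rho_t}^2\,dx.
\end{equation*}
Combining the two inequalities, for $t>0$,
\begin{equation*}
\int_\Omega v\,u_\rho^2\,dx \leq \frac{\widetilde{\mu}_1(\rho_t)-\widetilde{\mu}_1(\rho)}{t} \leq \int_\Omega v\,u_{\rho_t}^2\,dx,
\end{equation*}
with the inequalities reversed for $t<0$.

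The final step is to let $t\to 0$ and show the right-hand side converges to $\int_\Omega v\,u_\rho^2\,dx$, which by the squeeze forces the derivative to exist and equal $\int_\Omega u_\rho^2 v\,dx$, giving \eqref{gatto}. For this I would invoke Theorem \ref{teo1} iii): since $\widetilde{\mu}_1(\rho)>0$, for any sequence $t_n\to 0$ the weights $\rho_{t_n}$ converge to $\rho$ in $L^\infty(\Omega)$ (hence weakly*), so $\widetilde{u}_{\rho_{t_n}}=u_{\rho_{t_n}}\to u_\rho$ strongly in $H_0^s(\Omega)$, and therefore strongly in $L^2(\Omega)$ by the compact embedding. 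Consequently $u_{\rho_{t_n}}^2\to u_\rho^2$ in $L^1(\Omega)$, and since $v\in L^\infty(\Omega)$ we get $\int_\Omega v\,u_{\rho_{t_n}}^2\,dx\to\int_\Omega v\,u_\rho^2\,dx$. As the sequence $t_n\to 0$ was arbitrary, the limit of the difference quotient exists and equals $\int_\Omega u_\rho^2 v\,dx$; linearity in $v$ of this expression then yields Gâteaux differentiability with differential $u_\rho^2$. The main obstacle is ensuring the strong $L^2$ convergence $u_{\rho_t}\to u_\rho$ — but this is precisely the content of Theorem \ref{teo1} iii), so the argument reduces to a clean application of the already-established continuity of $\rho\mapsto\widetilde{\mu}_1(\rho)\widetilde{u}_\rho$.
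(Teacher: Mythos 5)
Your argument is correct, and it takes a recognizably different route from the paper's. The paper does not use the Rayleigh-quotient squeeze: it writes the weak eigenvalue equations for $u_\rho$ and $u_{\rho+tv}$, tests each against the other eigenfunction, and so obtains the exact identity
\[
\cfrac{\widetilde{\mu}_1(\rho+tv)-\widetilde{\mu}_1(\rho)}{t}\int_\Omega \rho\, u_\rho u_{\rho+tv}\,dx=\widetilde{\mu}_1(\rho)\int_\Omega u_\rho u_{\rho+tv}\,v\,dx,
\]
from which the limit follows after invoking part iii) of Theorem \ref{teo1} and \eqref{normaliz2} (one also uses that $\int_\Omega \rho\,u_\rho u_{\rho+tv}\,dx\to\mu_1(\rho)>0$, so division is legitimate for small $t$). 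You instead extract two one-sided bounds from the variational characterization \eqref{mu1}: testing the maximization for $\rho+tv$ with $u_\rho$ gives the lower bound (which is exactly the subgradient inequality consistent with the convexity proved in Theorem \ref{teo2}), and testing the maximization for $\rho$ with $u_{\rho+tv}$, together with the normalizations \eqref{normaliz1}--\eqref{normaliz2}, gives the upper bound; the squeeze then closes via the same strong convergence $u_{\rho+tv}\to u_\rho$ in $H_0^s(\Omega)$ (hence $L^2(\Omega)$, hence $u_{\rho+tv}^2\to u_\rho^2$ in $L^1(\Omega)$) from Theorem \ref{teo1} iii). Both proofs hinge on that continuity result and on $\widetilde{\mu}_1(\rho+tv)>0$ for $|t|$ small; your version is slightly more elementary (no cross-testing identity, no division step), while the paper's identity delivers the difference quotient in closed form, which can be convenient for quantitative estimates. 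All the small points you need are in place: the sign reversal of the sandwich for $t<0$ is handled, and linearity in $v$ of the limit gives the G\^ateaux differential $u_\rho^2$ as claimed.
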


\begin{proof}
Let us compute 
$$\lim_{t\to 0} \cfrac{ \widetilde{\mu}_1(\rho+ t v)- 
\widetilde{\mu}_1(\rho)}{t}\, .$$
Note that, by ii) of Theorem \ref{teo1}, $\widetilde{\mu}_1(\rho+ t v)$ converges to
$\widetilde{\mu}_1(\rho)$ as $t$ goes to zero for any $\rho, v\in L^\infty(\Omega)$. Therefore, 
$\widetilde{\mu}_1(\rho+ t v)>0$ for $t$ small enough.

The eigenfunctions $u_{\rho}$ and $u_{\rho+tv}$ satisfy
$$\widetilde{\mu}_1(\rho)\langle u_\rho, \varphi\rangle_{H_0^{s}(\Omega)} =
\langle \rho u_\rho, \varphi\rangle_{L^2(\Omega)} \quad \forall\, \varphi\in H_0^{s}(\Omega) $$
and
$$\widetilde{\mu}_1(\rho+ t v)\langle u_{\rho+tv}, \varphi\rangle_{H_0^{s}(\Omega)} =
\langle (\rho+tv) u_{\rho+tv}, \varphi\rangle_{L^2(\Omega)} \quad \forall\, \varphi\in 
H_0^{s}(\Omega). $$
By choosing $\varphi=u_{\rho+tv}$ in the former equation, $\varphi=u_\rho$ in the latter
and comparing we get
\begin{equation*}
\widetilde{\mu}_1(\rho+ t v)
\langle \rho u_\rho, u_{\rho+tv}\rangle_{L^2(\Omega)}=
\widetilde{\mu}_1(\rho) 
\langle (\rho+tv) u_{\rho+tv}, u_\rho\rangle_{L^2(\Omega)}.
\end{equation*}
Rearranging we find
\begin{equation}\label{rap} \cfrac{\widetilde{\mu}_1(\rho+ t v)- 
\widetilde{\mu}_1(\rho)}{t}\, \int_\Omega \rho \, u_\rho
u_{\rho +tv}\, dx =\widetilde{\mu}_1(\rho) \int_\Omega  u_\rho
u_{\rho +tv} v\, dx .\end{equation}
If $t$ goes to zero,  
then by iii) of Theorem \ref{teo1} it follows that  
$u_{\rho+ t v}$ converges to $u_\rho$ in $H_0^{s}(\Omega)$ and therefore
in $L^2(\Omega)$. Passing to the limit 
in \eqref{rap} and using \eqref{normaliz2} we conclude
\begin{equation*}
\lim_{t\to 0} \cfrac{\widetilde{\mu}_1(\rho+ t v)- \widetilde{\mu}_1(\rho)}{t}\, =
\int_\Omega u_\rho^2 v\, dx,
\end{equation*}
i.e. \eqref{gatto} holds.\end{proof}

We are now able to prove our main result.

\begin{theorem}\label{exist}
Let $\lambda_1(\rho)$ be the first positive eigenvalue of problem
\eqref{p0}, 
$\rho_0\in L^\infty(\Omega)$ 
such that $|\{\rho_0>0\}|>0$ and $\mathcal{G}(\rho_0)$ the class of rearrangement
of $\rho_0$ introduced in Definition \ref{class}. 
Then\\
i) there exist $\check{\rho}_1\in\mathcal{G}(\rho_0)$
such that
\begin{equation}\label{inf}
\lambda_1(\check{\rho}_1)=\min_{\rho\in \mathcal{G}(\rho_0)} \lambda_1(\rho);
\end{equation}
ii) there exists an increasing function $\psi$ such
that 
\begin{equation}\label{carat}\check{\rho}_1= \psi(u_{\check{\rho}_1}) \quad \text{a.e. in }\Omega, \end{equation} where $u_{\check{\rho}_1}$ is 
the positive first eigenfunction relative to $\lambda_1(\check{\rho}_1)$ normalized as in \eqref{normaliz1}.
\end{theorem}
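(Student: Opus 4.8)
The plan is to reformulate the problem as $\sup_{\rho\in\mathcal{G}(\rho_0)}\mu_1(\rho)$ and to combine the sequential weak$^*$ compactness of $\overline{\mathcal{G}(\rho_0)}$ (Proposition~\ref{cane}) and the sequential weak$^*$ continuity of $\widetilde\mu_1$ (Theorem~\ref{teo1}(ii)) with the linear optimization results of Section~3 and Fischer's Principle \eqref{mu1}. For i), set $\sigma:=\sup_{\rho\in\mathcal{G}(\rho_0)}\mu_1(\rho)$: it is finite, since $\mu_1(\rho)\le C^2\|\rho_0\|_{L^\infty(\Omega)}$ by \eqref{mu1} and \eqref{i}, and strictly positive, since $\rho_0\in\mathcal{G}(\rho_0)$ together with $|\{\rho_0>0\}|>0$ gives $\mu_1(\rho_0)>0$ by Proposition~\ref{segnorho}. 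Choose a maximizing sequence $\{\rho_n\}\subset\mathcal{G}(\rho_0)$; after extracting a subsequence, $\rho_n$ converges weakly$^*$ to some $\hat\rho\in\overline{\mathcal{G}(\rho_0)}$, and Theorem~\ref{teo1}(ii) gives $\widetilde\mu_1(\hat\rho)=\sigma>0$. Thus $|\{\hat\rho>0\}|>0$, the normalized positive first eigenfunction $u_{\hat\rho}$ exists, and $\int_\Omega\hat\rho\,u_{\hat\rho}^2\,dx=\mu_1(\hat\rho)=\sigma$ by \eqref{normaliz1} and \eqref{normaliz2}. Since $u_{\hat\rho}^2\in L^1(\Omega)$ and the right-hand side of \eqref{day} is attained, the linear problem $\sup_{\rho\in\mathcal{G}(\rho_0)}\int_\Omega\rho\,u_{\hat\rho}^2\,dx$ has a solution $\check\rho_1\in\mathcal{G}(\rho_0)$; since $\int_\Omega\rho_n u_{\hat\rho}^2\,dx\le\int_\Omega\check\rho_1 u_{\hat\rho}^2\,dx$ for every $n$ while the left-hand side converges to $\int_\Omega\hat\rho\,u_{\hat\rho}^2\,dx$, we obtain $\int_\Omega\check\rho_1 u_{\hat\rho}^2\,dx\ge\sigma$. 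Testing \eqref{mu1} (for the weight $\check\rho_1$) with $u_{\hat\rho}$, which satisfies $\|u_{\hat\rho}\|_{H_0^s(\Omega)}=1$, yields $\mu_1(\check\rho_1)\ge\int_\Omega\check\rho_1 u_{\hat\rho}^2\,dx\ge\sigma$; the reverse inequality is immediate because $\check\rho_1\in\mathcal{G}(\rho_0)$, so $\mu_1(\check\rho_1)=\sigma$, that is $\lambda_1(\check\rho_1)=1/\sigma=\min_{\rho\in\mathcal{G}(\rho_0)}\lambda_1(\rho)$, which is \eqref{inf}. One could instead invoke Bauer's maximum principle using the convexity of $\widetilde\mu_1$ (Theorem~\ref{teo2}(i)) and the extreme-point description $\mathcal{G}(\rho_0)=\mathrm{ext}\,\overline{\mathcal{G}(\rho_0)}$ (Proposition~\ref{convexity}(iii)), but the route above is more self-contained and, in addition, prepares the proof of ii).

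For ii), the equalities just obtained give $\int_\Omega\check\rho_1 u_{\hat\rho}^2\,dx=\sigma=\mu_1(\check\rho_1)=\mu_1(\check\rho_1)\|u_{\hat\rho}\|_{H_0^s(\Omega)}^2$, so $u_{\hat\rho}$ realizes the maximum in \eqref{mu1} relative to the weight $\check\rho_1$; by the characterization of that maximizer, the simplicity of $\lambda_1(\check\rho_1)$ and the normalization \eqref{normaliz1} we conclude $u_{\hat\rho}=u_{\check\rho_1}$. Hence $\check\rho_1$ is a solution of the linear maximization problem $\sup_{\rho\in\mathcal{G}(\rho_0)}\int_\Omega\rho\,u_{\check\rho_1}^2\,dx$. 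If this problem has a \emph{unique} solution, then Proposition~\ref{Teobart}, applied to the $L^1$-function $u_{\check\rho_1}^2$, furnishes an increasing $\phi$ with $\check\rho_1=\phi(u_{\check\rho_1}^2)$ a.e.\ in $\Omega$; since $u_{\check\rho_1}\ge0$, the map $\psi(\tau):=\phi(\tau^2)$ is increasing on $[0,\infty)$ and $\check\rho_1=\psi(u_{\check\rho_1})$ a.e., which is \eqref{carat}.

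The step I expect to be the real obstacle is the uniqueness of the maximizer of $\rho\mapsto\int_\Omega\rho\,u_{\check\rho_1}^2\,dx$ over $\mathcal{G}(\rho_0)$. By the rearrangement theory of Section~3 a sufficient condition is that $u_{\check\rho_1}$ have no plateau of positive measure, i.e.\ $|\{x\in\Omega:u_{\check\rho_1}(x)=t\}|=0$ for every $t>0$ (the value $t=0$ being harmless since $u_{\check\rho_1}>0$ in $\Omega$): then the monotone matching of $\rho_0^*$ with $(u_{\check\rho_1}^2)^*$ attaining the upper bound in \eqref{day} is uniquely determined a.e. Proving this non-flatness is where the nonlocal nature of $(-\Delta)^s$ must be used: regularity alone does not suffice (for $s\le1/2$ the eigenfunction is only Hölder continuous in $\Omega$), so one has to exploit the equation $(-\Delta)^su_{\check\rho_1}=\lambda_1(\check\rho_1)\,\check\rho_1\,u_{\check\rho_1}$ — comparing the nonlocal operator at density points of a putative level set $\{u_{\check\rho_1}=t\}$ and appealing to a unique-continuation/strong-maximum-principle property of $(-\Delta)^s$ to rule out $u_{\check\rho_1}$ being constant on a set of positive measure. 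Once this is established, ii) follows exactly as above.
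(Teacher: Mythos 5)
Your part i) is correct, and it takes a genuinely different route from the paper: you pass to a weak* limit $\hat\rho\in\overline{\mathcal{G}(\rho_0)}$ of a maximizing sequence and then transfer optimality to an actual rearrangement by solving the auxiliary linear problem $\sup_{\rho\in\mathcal{G}(\rho_0)}\int_\Omega\rho\,u_{\hat\rho}^2\,dx$ and testing \eqref{mu1} with $u_{\hat\rho}$ (this is essentially the scheme of \cite{CCP}). The paper instead shows directly that the weak* maximizer lies in $\mathcal{G}(\rho_0)$, using that $\mathcal{G}(\rho_0)$ is the set of extreme points of $\overline{\mathcal{G}(\rho_0)}$ (Proposition \ref{convexity} iii)) together with convexity of $\widetilde\mu_1$ and the strictness statement of Theorem \ref{teo2} ii); this also shows, as a byproduct used later, that no maximizer exists in $\overline{\mathcal{G}(\rho_0)}\smallsetminus\mathcal{G}(\rho_0)$. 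Your identification $u_{\hat\rho}=u_{\check\rho_1}$ is also sound.

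The genuine gap is in part ii). Proposition \ref{Teobart} requires the maximizer of $\rho\mapsto\int_\Omega\rho\,u_{\check\rho_1}^2\,dx$ over $\mathcal{G}(\rho_0)$ to be \emph{unique}, and you reduce this to the claim that $u_{\check\rho_1}$ has no level set of positive measure, which you do not prove: you only indicate that it should follow from a unique continuation/strong maximum principle for $(-\Delta)^s$. For an indefinite, merely bounded measurable weight this is a nontrivial property, it is nowhere established in the present paper, and the paper never needs it (nor is the implication ``no plateaus $\Rightarrow$ uniqueness of the linear maximizer'' among the rearrangement facts recalled in Section 3, so it too would need justification from Burton's theory). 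The paper closes exactly this point by a purely convex-analytic argument that uses no information on level sets: since $\widetilde\mu_1$ is convex (Theorem \ref{teo2} i)) and G\^ateaux differentiable at $\check{\rho}_1$ with differential $u_{\check\rho_1}^2$ (Theorem \ref{teo3}), the subgradient inequality
\begin{equation*}
\widetilde\mu_1(\rho)\ \geq\ \widetilde\mu_1(\check\rho_1)+\int_\Omega(\rho-\check\rho_1)\,u_{\check\rho_1}^2\,dx
\qquad\forall\,\rho\in\overline{\mathcal{G}(\rho_0)}
\end{equation*}
holds; combined with the maximality of $\check\rho_1$, with Theorem \ref{teo2} ii) applied to the midpoint $\tfrac{\rho+\check\rho_1}{2}$ when $\rho$ and $\check\rho_1$ are linearly independent, and with a direct computation in the linearly dependent case (which forces $\rho=-\check\rho_1$), one gets the strict inequality $\int_\Omega\check\rho_1 u_{\check\rho_1}^2\,dx>\int_\Omega\rho\,u_{\check\rho_1}^2\,dx$ for every $\rho\in\overline{\mathcal{G}(\rho_0)}\smallsetminus\{\check\rho_1\}$, i.e. the needed uniqueness, after which Proposition \ref{Teobart} applies exactly as you intended. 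Replacing your unproven non-flatness step by this convexity/differentiability argument makes your proof complete.
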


\begin{proof}
i) By iii) of Proposition \ref{cane} and ii) of Theorem \ref{teo1}, $\overline{\mathcal{G}(\rho_0)}$ 
is sequentially weakly* continuous and the map $\rho \mapsto \widetilde{\mu}_1(\rho)$
is sequentially weakly* compact. Therefore, there exists $\check{\rho}_1\in 
\overline{\mathcal{G}(\rho_0)}$ such that 
\begin{equation*}
\widetilde{\mu}_1(\check{\rho}_1)=\max_{\rho\in \overline{\mathcal{G}(\rho_0)}}\widetilde{ \mu}_1(\rho) .
\end{equation*}
Note that, by Proposition \ref{segnorho}, 
the condition $|\{\rho_0>0\}|>0$ guarantees $\widetilde{\mu}_1(\check{\rho}_1)>0$. 

Let us show
 that $\check{\rho}_1$ actually belongs to $\mathcal{G}(\rho_0)$ (in fact,
 the following argument shows that there are not maximizers of $\widetilde{\mu}_1(\rho)$
 in $\overline{\mathcal{G}(\rho_0)}\smallsetminus \mathcal{G}(\rho_0)$). 
 Proceeding by contradiction, 
 suppose that $\check{\rho}_1\not \in\mathcal{G}(\rho_0) $. Then, by iii) of Proposition
 \ref{convexity}, $\check{\rho}_1$ is not an extreme point of $\overline{\mathcal{G}(\rho_0)}$
 and thus there exist $\rho, \eta \in \overline{\mathcal{G}(\rho_0)}$ such that $\rho\neq \eta$
 and $\check{\rho}_1= \frac{\rho + \eta}{2}\, $.
 By i) of Theorem \ref{teo2} and being $\check{\rho}_1$ a maximizer, we have 
$$\widetilde{\mu}_1(\check{\rho}_1)\leq
 \cfrac{\widetilde{\mu}_1(\rho) +\widetilde{\mu}_1(\eta)}{2}\,
\leq \widetilde{\mu}_1(\check{\rho}_1)$$ and
then, equality sign holds. This implies 
$\widetilde{\mu}_1(\rho) =\widetilde{\mu}_1(\eta)=\widetilde{\mu}_1(\check{\rho}_1)>0$, 
that is $\rho$ and $\eta$ are maximizers as well. Now, applying ii) of Theorem \ref{teo2}
to $\rho$ and $\eta$ with $t=\frac{1}{2}\, $, we conclude that $\rho$ and $\eta$ are linearly 
dependent. Without loss of generality, we can assume that there exists $\alpha \in \mathbb{R}$ such 
that $\eta =\alpha \rho$, moreover $\alpha$ is nonzero since $\eta$ is a maximizer. Combining
 $\eta =\alpha \rho$ with $\check{\rho}_1= \frac{\rho + \eta}{2}\, $ we get 
 $\check{\rho}_1= \frac{1+\alpha}{2}\, \rho=\frac{1+\alpha}{2\alpha}\, \eta$. It is immediate
 to show that at least one of the coefficients $\frac{1+\alpha}{2}\,$ and $\frac{1+\alpha}{2\alpha}\,$
 must be nonnegative. In either cases we find a contradiction. For instance, if $\frac{1+\alpha}{2\alpha}\,\geq 0$, by Remark \ref{oss1} and maximality of $\eta$ we obtain 
 $$\widetilde{\mu}(\check{\rho}_1)= \cfrac{1+\alpha}{2\alpha}\,\widetilde{\mu} (\eta)=
 \cfrac{1+\alpha}{2\alpha}\,\widetilde{\mu} (\check{\rho}_1),$$
 which implies $\alpha=1$ and yields the contradiction $\rho=\eta$. The other case is analogous.
Thus, we conclude that $\check{\rho}_1\in \mathcal{G}(\rho_0)$ and 
\begin{equation}\label{topo}
\widetilde{\mu}_1(\check{\rho}_1)=\max_{\rho\in \mathcal{G}(\rho_0)}\widetilde{ \mu}_1(\rho) .
\end{equation}
Being $|\{\rho_0 >0\}|>0$, we have 
$$\lambda_1(\rho)= \cfrac{1}{\mu_1(\rho)}\, =\cfrac{1}{\widetilde{\mu}_1(\rho)}\, $$
for all $\rho\in\mathcal{G}(\rho_0)$. Therefore, \eqref{topo} is equivalent to 
\eqref{inf} and i) is proved.\\

ii) We prove the claim by using Proposition \ref{Teobart}; more precisely, we show that 
\begin{equation}\label{silvia}
\int_\Omega 
\check{\rho}_1 u_{\check{\rho}_1}^2\, dx> \int_\Omega 
\rho\, u_{\check{\rho}_1}^2\, dx\end{equation}
for every $\rho\in \overline{\mathcal{G}(\rho_0)}\smallsetminus \{\check{\rho}_1\}$.
By exploiting the convexity of $\widetilde{\mu}_1(\rho)$ (see Theorem \ref{teo2}) and its 
G\^ateaux differentiability in $\check{\rho}_1$ (see Theorem \ref{teo3})
 we have (for details see \cite{ET})
 \begin{equation} \label{maria}
\widetilde{\mu_1}(\rho)\geq 
\widetilde{\mu_1}\big(\check{\rho}_1)
+\int_\Omega (\rho-
\check{\rho}_1) u_{\check{\rho}_1}^2\, dx
\end{equation} 
for all $\rho\in \overline{\mathcal{G}(\rho_0)}$.
First, let us suppose $\widetilde{\mu_1}(\rho)< \widetilde{\mu_1}(\check{\rho}_1)$.
Comparing with \eqref{maria} we find
\begin{equation*}
\int_\Omega 
(\rho-\check{\rho}_1) u_{\check{\rho}_1}^2\, dx<0  ,\end{equation*}
that is \eqref{silvia}.\\
Next, let us consider the case $\widetilde{\mu_1}(\rho)=\widetilde{\mu_1}(\check{\rho}_1)$,
$\rho\in \overline{\mathcal{G}(\rho_0)}\smallsetminus \{\check{\rho}_1\}$. By the argument 
used in part i) there are not maximizers of $\widetilde{\mu_1}$ in $\overline{\mathcal{G}(\rho_0)}\smallsetminus \mathcal{G}(\rho_0)$, therefore $\rho \in \mathcal{G}(\rho_0)$.

If $\check{\rho}_1$ and $\rho$ are linearly independent, then, ii) of Theorem \ref{teo2}
implies 
$$ \widetilde{\mu}_1 \left(\frac{\check{\rho}_1 + \rho}{2} \right)<\frac{\widetilde{\mu}_1(
\check{\rho}_1) +\widetilde{\mu}_1(\rho)}{2}\, = \widetilde{\mu}_1(
\check{\rho}_1).$$
Then, as in the previous step, \eqref{maria} with $\frac{\check{\rho}_1 + \rho}{2}\,$ in place of 
$\rho$ yields \eqref{silvia}.\\
Finally, let $\check{\rho}_1$ and $\rho$ be linearly dependent. Being $\check{\rho}_1$ and 
$\rho$ both nonzero, we can assume $\rho=\alpha \check{\rho}_1$
for a constant $\alpha\in \mathbb{R}$. Therefore $|\rho|= |\alpha|\,  |\check{\rho}_1|$. Now, 
by i) and ii) of Proposition \ref{rospo}, the functions $|\rho|$ and $| \check{\rho}_1|$
are equimeasurable and $\esssup |\rho|= \esssup |\check{\rho}_1|>0$.
This leads to $|\alpha|=1$ and, being $\rho$ and $\check{\rho}_1$ distinct, $\alpha=-1$. Thus $\rho=
-\check{\rho}_1$, which by \eqref{normaliz2} gives 
$$\int_\Omega 
\rho\,  u_{\check{\rho}_1}^2\, dx=-\int_\Omega \check{\rho}_1 
u_{\check{\rho}_1}^2\, dx = -  \widetilde{\mu}_1(\check{\rho}_1)<  \widetilde{\mu}_1(
\check{\rho}_1) = \int_\Omega \check{\rho}_1
\,  u_{\check{\rho}_1}^2\, dx, $$
i.e. \eqref{silvia}. This completes the proof.
\end{proof}

\begin{rem}
If $\rho_0$ satisfies the stronger condition $\int_\Omega \rho_0\, dx >0$, then the proof
simplifies as one can rely on iii) of Theorem \ref{teo2} (strict convexity of $\rho \mapsto 
\widetilde{\mu}_1(\rho)$). Indeed, from $\check{\rho}_1=\frac{\rho +\eta}{2}\, $, $\rho\neq \eta$, 
it follows immediately the contradiction
$$ \widetilde{\mu}_1(\check{\rho}_1)< \cfrac{\widetilde{\mu}_1(\rho)
 +\widetilde{\mu}_1(\eta)}{2}\, \leq  \widetilde{\mu}_1(\check{\rho}_1).$$
Further, note that in this case $\lambda_1(\rho)$ is well defined for all $\rho\in 
\overline{\mathcal{G}(\rho_0)}$ (it follows by i) of Proposition \ref{convexity}, Definition 
\ref{prec1} and Proposition \ref{furbi} with $F$ equal to the identity function).
However, the previous proof shows that no minimizer of $\lambda_1(\rho)$ belongs to
$\overline{\mathcal{G}(\rho_0)} \smallsetminus \mathcal{G}(\rho_0)$.\\
Finally, in this case the estimate
\begin{equation}\label{stima}\lambda_1(\check{\rho}_1)\leq \cfrac{\lambda_1\, |\Omega|}{
\int_\Omega \rho_0 \, dx}
\end{equation} holds, where
 $\lambda_1$ denotes the first eigenvalue of problem \eqref{p0} with $\rho\equiv 1$.
The estimate \eqref{stima} follows by the fact that the constant function
$c=\frac{1}{|\Omega|} \int_\Omega \rho_0 \, dx$ belongs to $\overline{\mathcal{G}(\rho_0)}$
(see Proposition \ref{prec2}), 
the minimality of $\lambda_1(\check{\rho}_1)$ and the identity
$\lambda_1 =c\lambda_1(c)$ (which is a straightforward consequence of the variational characterization
 \eqref{2a}).
\end{rem}

\begin{rem} The study of the maximization of $\lambda_1(\rho)$ on $\mathcal{G}(\rho_0)$
seems to be rather different. We list here some partial results. Assume $|\{\rho_0>0\}|>0$. If
$\int_\Omega \rho_0 \, dx\leq 0$, then, by Proposition \ref{prec2} and Proposition \ref{convexity},
the nonpositive constant function $c=\frac{1}{|\Omega|}\, \int_\Omega \rho_0 \, dx$ belongs to 
$\overline{\mathcal{G}(\rho_0)}$. Therefore, by definition of $\widetilde{\mu}_1(\rho)$, 
$\min_{\rho \in \overline{\mathcal{G}(\rho_0)}} \widetilde{\mu}_1(\rho)=0$ which, in turns, 
being $\mathcal{G}(\rho_0)$ dense in $\overline{\mathcal{G}(\rho_0)}$ and 
$\widetilde{\mu}_1(\rho)$
sequentially weak* continuous, implies 
$\inf_{\rho \in \mathcal{G}(\rho_0)} \widetilde{\mu}_1(\rho)=0$
and, finally, $\sup_{\rho \in \mathcal{G}(\rho_0)} \lambda_1(\rho)=+\infty$.\\
If, instead $\int_\Omega \rho_0 \, dx>0$, then by proceeding as in the first part of the previous proof 
and using iii) of Theorem \ref{teo2}, one immediately concludes that there is a unique $\widehat{\rho}_1
\in  \overline{\mathcal{G}(\rho_0)}$ such that
$$  \widetilde{\mu}_1(\widehat{\rho}_1)=\min_{\rho \in \overline{\mathcal{G}(\rho_0)}}
 \widetilde{\mu}_1(\rho),$$
 which, in this case, is equivalent to 
$$  \lambda_1(\widehat{\rho}_1)=\max_{\rho \in \overline{\mathcal{G}(\rho_0)}}
 \lambda_1(\rho).$$
 Moreover, by Theorem \ref{teo3}, for all $\rho\in \overline{\mathcal{G}(\rho_0)}$ and
 $t\in (0, 1]$ we can write 
 $$ \widetilde{\mu}_1(\widehat{\rho}_1)\leq  \widetilde{\mu}_1(\widehat{\rho}_1+ 
 t(\rho -\widehat{\rho}_1)) =\widetilde{\mu}_1(\widehat{\rho}_1)+ t \int_\Omega 
u^2_{\widehat{\rho}_1} (\rho -\widehat{\rho}_1)\, dx + o(t) $$
for $t$ that goes to zero. Finally, after some easy algebraic manipulations and passing to the limit we find
$$\int_\Omega \hat{\rho_1} u^2_{\hat{\rho_1}}\, dx \leq \int_\Omega \rho u^2_{\hat{\rho_1}}\, dx
\quad \forall\, \rho \in\overline{\mathcal{G}(\rho_0)} .$$\\
\end{rem}

\begin{rem}\label{ape}
As already note in Remark \ref{osserv}, we have 
$\lambda_{-1}(\rho)= -\lambda_{1}(-\rho)$ for all $\rho \in L^\infty(\Omega)$ such that $|\{\rho<0\}|>0$. Furthermore, it is easy to see from \eqref{rana} that the eigenspaces relative to $\lambda_{-1}(\rho)$ and $\lambda_{1}(-\rho)$ coincide. Finally, observe that
by iii) of Proposition \ref{rospo} with $F(t)=-t$, it follows that $\mathcal{G}(-\rho_0)=-\mathcal{G}(\rho_0)=
\{\rho \in L^\infty (\Omega): -\rho \in \mathcal{G}(\rho_0)\}$ and then $\overline{\mathcal{G}(-\rho_0)}=-\overline{\mathcal{G}(\rho_0)}$. Thus, Theorem \ref{exist} can be reformulated in terms of the first negative eigenvalue $\lambda_{-1}(\rho)$ as follows.\\

\begin{theorem}
Let $\lambda_{-1}(\rho)$ be the first negative eigenvalue of problem
\eqref{p0}, 
$\rho_0\in L^\infty(\Omega)$ 
such that $|\{\rho_0<0\}|>0$ and $\mathcal{G}(\rho_0)$ the class of rearrangement
of $\rho_0$ introduced in Definition \ref{class}. 
Then\\
i) there exist $\check{\rho}_{-1}\in\mathcal{G}(\rho_0)$
such that
\begin{equation}\label{inf2}
\lambda_{-1}(\check{\rho}_{-1})=\max_{\rho\in \mathcal{G}(\rho_0)} \lambda_{-1}(\rho);
\end{equation}
ii) there exists a decreasing function $\phi$ such
that 
\begin{equation*}\check{\rho}_{-1}= \phi(u_{-\check{\rho}_{-1}}) \quad 
\text{a.e. in }\Omega, \end{equation*} where $u_{-\check{\rho}_{-1}}$ is 
the first positive eigenfunction relative to $\lambda_1(-\check{\rho}_{-1})$ normalized as in \eqref{normaliz1}.
\end{theorem}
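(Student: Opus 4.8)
The plan is to obtain this theorem as an immediate corollary of Theorem \ref{exist} applied to the weight $-\rho_0$, using the three facts already recorded in Remark \ref{ape}: the identity $\lambda_{-1}(\rho)=-\lambda_1(-\rho)$ for $\rho$ with $|\{\rho<0\}|>0$, the coincidence of the eigenspaces relative to $\lambda_{-1}(\rho)$ and $\lambda_1(-\rho)$, and the equality $\mathcal{G}(-\rho_0)=-\mathcal{G}(\rho_0)$.

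First I would observe that the hypothesis $|\{\rho_0<0\}|>0$ is the same as $|\{-\rho_0>0\}|>0$, so Theorem \ref{exist} is applicable with $-\rho_0$ in place of $\rho_0$. It provides a function $\check{\sigma}\in\mathcal{G}(-\rho_0)$ with
\begin{equation*}
\lambda_1(\check{\sigma})=\min_{\sigma\in\mathcal{G}(-\rho_0)}\lambda_1(\sigma)
\end{equation*}
and an increasing function $\psi$ such that $\check{\sigma}=\psi(u_{\check{\sigma}})$ a.e.\ in $\Omega$, where $u_{\check{\sigma}}$ is the first positive eigenfunction relative to $\lambda_1(\check{\sigma})$ normalized as in \eqref{normaliz1}. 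Then I would set $\check{\rho}_{-1}:=-\check{\sigma}$; since $\sigma\mapsto-\sigma$ is a bijection of $\mathcal{G}(-\rho_0)$ onto $\mathcal{G}(\rho_0)$, we have $\check{\rho}_{-1}\in\mathcal{G}(\rho_0)$. For part i), using $\lambda_1(-\rho)=-\lambda_{-1}(\rho)$ on every $\rho\in\mathcal{G}(\rho_0)$ gives the chain of identities
\begin{equation*}
\lambda_{-1}(\check{\rho}_{-1})=-\lambda_1(\check{\sigma})=-\min_{\sigma\in\mathcal{G}(-\rho_0)}\lambda_1(\sigma)=-\min_{\rho\in\mathcal{G}(\rho_0)}\bigl(-\lambda_{-1}(\rho)\bigr)=\max_{\rho\in\mathcal{G}(\rho_0)}\lambda_{-1}(\rho),
\end{equation*}
which is \eqref{inf2}. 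For part ii), I would note that $u_{\check{\sigma}}$ is exactly the function denoted $u_{-\check{\rho}_{-1}}$ in the statement (the first positive eigenfunction relative to $\lambda_1(-\check{\rho}_{-1})$), so that $\check{\rho}_{-1}=-\check{\sigma}=-\psi(u_{-\check{\rho}_{-1}})=\phi(u_{-\check{\rho}_{-1}})$ a.e.\ in $\Omega$, with $\phi:=-\psi$ decreasing.

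There is no genuine obstacle here; the argument is a pure sign reversal. The only point demanding care is the bookkeeping in the correspondence $\rho\leftrightarrow-\rho$ — simultaneously for the eigenvalues, for the associated eigenfunctions, and for the rearrangement classes — but all three correspondences are already established in Remark \ref{ape}, so the write-up is short and mechanical.
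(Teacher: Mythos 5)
Your argument is correct and is exactly the route the paper takes: the theorem is stated in Remark \ref{ape} as an immediate consequence of Theorem \ref{exist} applied to $-\rho_0$, using the identities $\lambda_{-1}(\rho)=-\lambda_1(-\rho)$, the coincidence of eigenspaces, and $\mathcal{G}(-\rho_0)=-\mathcal{G}(\rho_0)$, which is precisely your sign-reversal bookkeeping. Your write-up just makes explicit the details the paper leaves to the reader (including that $\phi=-\psi$ is decreasing), so nothing is missing.
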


\end{rem}

\section{Steiner symmetry}\label{symmetry}

We introduce first the definitions and some results about the Steiner  
symmetrization of sets and functions. For a thorough treatment we refer the reader to
\cite{Bro}. Then, we prove our symmetry result.

Let $l(x')= \{x=(x_1,x')\in \mathbb{R}^N: x_1\in \mathbb{R}\}$ for any $x'\in \mathbb{R}^{N-1}$ fixed
and let  $T$ be the hyperplane $\{x=(x_1,x')\in \mathbb{R}^N: x_1=0\}$.

\begin{definition}
Let $\Omega\subset \mathbb{R}^N$ be a measurable set. Then\\
i) the set 
\begin{equation*}
\Omega^\sharp=\left\{x=(x_1,x')\in\mathbb{R}^N:2|x_1|<|\Omega \cap
l(x')|_1, x' \in\mathbb{R}^{N-1}\right\},
\end{equation*}
where $|\,\cdot\, |_1$ denotes the one dimensional Lebesgue measure, is said 
\emph{Steiner symmetrization} of $\Omega$ with respect to the hyperplane $T$;\\
ii) the set $\Omega$ is said \emph{Steiner symmetric} if $\Omega^\sharp=\Omega$.
\end{definition}

It can be shown that $|\Omega|=|\Omega^\sharp|$.

\begin{definition}
Let $\Omega\subset\mathbb{R}^N$ be a measurable set of finite measure and
$u: \Omega \to \mathbb{R}$ a measurable function bounded from below. Then\\
i) the function $u^\sharp: \Omega^\sharp \to \mathbb{R}$, defined by 
\begin{equation*}
u^\sharp(x)=\sup\{c\in \mathbb{R}:x\in\{u>c\}^\sharp\}
\end{equation*}
is said \emph{Steiner symmetrization} of $u$ with respect to the hyperplane $T$;\\
ii) the function $u$ is said  \emph{Steiner symmetric} if $u^\sharp=u$.
\end{definition}

It can be proved that 
\begin{equation}\label{dino}
|\{x\in \Omega^\sharp: u^\sharp(x)>t\}|=|\{x\in \Omega: u(x)>t\}|\quad \forall\, t\in \mathbb{R}.
\end{equation}

\begin{proposition}\label{s1}
Let $\Omega\subset\mathbb{R}^N$ be a measurable set of finite measure, 
$u: \Omega \to \mathbb{R}$ a measurable function bounded from below and $\psi: \mathbb{R}\to\mathbb{R}$ an increasing
function. Then $\psi(u^\sharp)= (\psi(u))^\sharp$ a.e. in $\Omega$.
\end{proposition}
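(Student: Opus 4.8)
The plan is to reduce the identity $\psi(u^\sharp)=(\psi(u))^\sharp$ to a statement about level sets, where the definition of Steiner symmetrization of a function is literally phrased. Recall that for a function $v$ bounded from below, $v^\sharp(x)=\sup\{c\in\mathbb{R}: x\in\{v>c\}^\sharp\}$, so two such symmetrizations agree a.e. as soon as the corresponding superlevel sets agree (up to null sets) for almost every threshold $c$. Thus the first step is to record the elementary fact that, for any increasing $\psi$ and any $t\in\mathbb{R}$, the set $\{x\in\Omega:\psi(u(x))>t\}$ coincides with $\{x\in\Omega:u(x)>s\}$ for an appropriate $s=s(t)$ — more precisely, writing $s(t)=\inf\{r\in\mathbb{R}:\psi(r)>t\}$, one has $\{u>s(t)\}\subseteq\{\psi(u)>t\}\subseteq\{u\ge s(t)\}$, and the two outer sets differ only by $\{u=s(t)\}$. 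Since $\{u=s(t)\}$ has positive measure for at most countably many values of $s(t)$, hence for at most countably many $t$, this discrepancy is immaterial for an a.e.-in-$t$ argument.

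Next I would handle the two superlevel families. On the one hand, from the level-set description above and the fact that Steiner symmetrization of sets is monotone and measure-preserving (so it is insensitive to modifying a set by a null set), we get $\{\psi(u)>t\}^\sharp=\{u>s(t)\}^\sharp$ for all but countably many $t$. On the other hand, directly from the definition of $(\psi(u))^\sharp$ and of $u^\sharp$, together with the monotonicity of $\psi$, one checks that $\{\psi(u^\sharp)>t\}$ and $\{u^\sharp>s(t)\}$ coincide up to the null set $\{u^\sharp=s(t)\}$; and $\{u^\sharp>c\}=\{u>c\}^\sharp$ for every $c$ by the very construction of $u^\sharp$ (this is the standard ``layer-cake'' characterization, which also underlies \eqref{dino}). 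Chaining these identities gives, for a.e. $t$,
\begin{equation*}
\{\psi(u^\sharp)>t\}=\{u^\sharp>s(t)\}\ (\text{mod null})=\{u>s(t)\}^\sharp=\{\psi(u)>t\}^\sharp=\{(\psi(u))^\sharp>t\}.
\end{equation*}
Since $\psi(u^\sharp)$ and $(\psi(u))^\sharp$ are measurable functions on $\Omega^\sharp$ whose superlevel sets agree for a.e. threshold, they are equal a.e. in $\Omega^\sharp$, and hence a.e. in $\Omega$ after the usual identification; this is the conclusion.

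The main obstacle is the careful bookkeeping of the ``bad'' thresholds. Strict inequalities in the superlevel sets, the distinction between $\{\psi(u)>t\}$ and $\{u>s(t)\}$ when $\psi$ has a flat piece or a jump at $s(t)$, and the analogous issue for $u^\sharp$, all conspire to produce countably many exceptional values of $t$; one must check that each such source contributes only a countable set (flat pieces of $\psi$ correspond to at most countably many values of $s$, jumps of $\psi$ likewise, and positive-measure level sets of $u$ and of $u^\sharp$ are countable as well) and that a function is determined a.e. by its superlevel sets at a co-countable set of heights. Once these routine measure-theoretic reductions are in place, the monotonicity of $\psi$ and the elementary properties of $\cdot^\sharp$ recorded before the proposition (monotonicity, measure preservation \eqref{dino}, and $\{u>c\}^\sharp=\{u^\sharp>c\}$) close the argument.
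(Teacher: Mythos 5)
Your overall strategy (reduce the identity to agreement of superlevel sets up to null sets) is sound, and in fact the paper offers no proof at all, simply citing \cite[Lemma 3.2]{Bro}; but your bookkeeping of the ``bad'' thresholds — which you yourself single out as the main obstacle — contains a genuine gap. The inference ``$\{u=s(t)\}$ has positive measure for at most countably many values of $s(t)$, hence for at most countably many $t$'' is false in general, because $t\mapsto s(t)$ is not countable-to-one: whenever $\psi$ has a jump at a point $r_0$, one has $s(t)=r_0$ for \emph{every} $t$ in an interval of positive length (roughly $[\psi(r_0^-),\psi(r_0^+))$). If in addition $|\{u=r_0\}|>0$, then for every $t$ in a subinterval one gets $\{\psi(u)>t\}=\{u\ge r_0\}$, which differs from $\{u>s(t)\}$ by a set of positive measure; concretely, take $\psi(r)=r$ for $r<0$, $\psi(r)=r+1$ for $r\ge 0$, and any $u$ with $|\{u=0\}|>0$: for all $t\in(0,1)$ your identity $\{\psi(u)>t\}^\sharp=\{u>s(t)\}^\sharp$ fails, and so does the claim that $\{\psi(u^\sharp)>t\}$ and $\{u^\sharp>s(t)\}$ differ by ``the null set $\{u^\sharp=s(t)\}$'', since $|\{u^\sharp=s(t)\}|=|\{u=s(t)\}|>0$. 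So the exceptional set of thresholds can be an interval, not a countable (or even null) set, and the ``a.e.-in-$t$'' dismissal is not available there. This matters for the intended application: the $\psi$ produced by Proposition \ref{Teobart} is merely increasing and is typically a step function, so jumps cannot be excluded.

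The gap is fixable inside your framework, because the endpoints of your chain still agree for \emph{every} $t$ even though the intermediate steps fail on the bad set. Write $\{\psi(u)>t\}=u^{-1}(I_t)$ with $I_t=\{r:\psi(r)>t\}$, which is either $(s(t),\infty)$ or $[s(t),\infty)$, and note that the \emph{same} $I_t$ serves for $u^\sharp$. In the open case your chain works verbatim. In the closed case you need the additional fact that $\{u\ge s\}^\sharp=\{u^\sharp\ge s\}$ up to a null set; this follows by taking $s_n\uparrow s$, using $\{u^\sharp>s_n\}=\{u>s_n\}^\sharp$ modulo null sets, monotonicity of set symmetrization and continuity of measure on the finite-measure set $\Omega$ (incidentally, even $\{u^\sharp>c\}=\{u>c\}^\sharp$ holds in general only up to a null set, since by definition $\{u^\sharp>c\}=\bigcup_{c'>c}\{u>c'\}^\sharp$; this is harmless but should be stated as such). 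With these two cases in hand one gets $\{\psi(u^\sharp)>t\}=\{(\psi(u))^\sharp>t\}$ modulo null sets for every $t$, and your final step (functions with a.e.-equal superlevel sets at a dense set of thresholds coincide a.e.) then yields the proposition.
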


For the proof see \cite[Lemma 3.2]{Bro}.

\begin{proposition}[Hardy-Littlewood's inequality]\label{s2}
Let $\Omega\subset\mathbb{R}^N$ be a measurable set of finite measure, $u, v:\Omega \to \mathbb{R}$ 
two measurable functions bounded from below such that $uv\in L^1(\Omega)$.  Then
\begin{equation*}
\int_\Omega u(x)v(x)\,dx\leq \int_{\Omega^\sharp}u^\sharp(x)v^\sharp(x)\,dx.
\end{equation*}\end{proposition}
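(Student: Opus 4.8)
The statement to be proved is the Hardy--Littlewood inequality for Steiner symmetrization, and the plan is to combine the layer-cake (Cavalieri) formula with a one-dimensional comparison of superlevel sets along the lines $l(x')$.

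First I would reduce to the case of nonnegative functions. Since $u$ and $v$ are bounded from below, fix lower bounds $a\le u$ and $b\le v$ and set $\tilde u=u-a\ge 0$, $\tilde v=v-b\ge 0$. Because $t\mapsto t-a$ is increasing, Proposition~\ref{s1} gives $\tilde u^\sharp=u^\sharp-a$ and $\tilde v^\sharp=v^\sharp-b$; moreover, by \eqref{dino} together with $|\Omega|=|\Omega^\sharp|$, Steiner symmetrization preserves integrals, so $\int_\Omega\tilde u\,dx=\int_{\Omega^\sharp}\tilde u^\sharp\,dx$ and likewise for $\tilde v$. Expanding $uv=\tilde u\tilde v+a\tilde v+b\tilde u+ab$, and the same identity on $\Omega^\sharp$, the inequality for $u,v$ follows immediately from the one for the nonnegative pair $\tilde u,\tilde v$ (one may also assume the integrals in sight are finite, the remaining case being trivial). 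Hence from now on $u,v\ge 0$.

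Next, for a nonnegative measurable $w$ one has the pointwise identity $w(x)=\int_0^\infty\mathbf{1}_{\{w>t\}}(x)\,dt$. Writing this for $u$ and $v$, multiplying, and applying Tonelli's theorem (all integrands are nonnegative), one gets
$$\int_\Omega uv\,dx=\int_0^\infty\!\!\int_0^\infty|\{u>t\}\cap\{v>s\}|\,dt\,ds,$$
and, in exactly the same way applied on $\Omega^\sharp$ to $u^\sharp,v^\sharp$,
$$\int_{\Omega^\sharp}u^\sharp v^\sharp\,dx=\int_0^\infty\!\!\int_0^\infty|\{u^\sharp>t\}\cap\{v^\sharp>s\}|\,dt\,ds.$$
Thus it suffices to show $|\{u>t\}\cap\{v>s\}|\le|\{u^\sharp>t\}\cap\{v^\sharp>s\}|$ for a.e.\ $(t,s)$. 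Here I would record that $\{u^\sharp>t\}=\bigcup_{c>t}\{u>c\}^\sharp\subseteq\{u>t\}^\sharp$ by monotonicity of $\sharp$ under inclusion, while $|\{u^\sharp>t\}|=|\{u>t\}|=|\{u>t\}^\sharp|$ by \eqref{dino} and measure preservation; hence $\{u^\sharp>t\}$ coincides with $(\{u>t\})^\sharp$ up to a null set, and similarly for $v$. So the claim reduces to the purely geometric statement $|A\cap B|\le|A^\sharp\cap B^\sharp|$ with $A=\{u>t\}$, $B=\{v>s\}$. This I would prove by slicing along the lines $l(x')$: by definition $A^\sharp\cap l(x')$ is the open interval centered at $x_1=0$ of length $|A\cap l(x')|_1$, and likewise for $B^\sharp$, so two such intervals are nested and
$$|A^\sharp\cap B^\sharp\cap l(x')|_1=\min\bigl(|A\cap l(x')|_1,\,|B\cap l(x')|_1\bigr)\ge|A\cap B\cap l(x')|_1.$$
Integrating in $x'\in\mathbb{R}^{N-1}$ via Fubini gives $|A\cap B|\le|A^\sharp\cap B^\sharp|$, which, fed back through the two layer-cake expansions and then the reduction step, completes the proof.

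The argument is essentially routine; the only points needing a little care are the identification of $\{u^\sharp>t\}$ with $(\{u>t\})^\sharp$ up to a null set (needed to match the two layer-cake expansions) and the bookkeeping of Tonelli/Fubini, both straightforward once one is in the nonnegative setting. The genuine content is the elementary observation that intervals symmetric about a common center are totally ordered by inclusion.
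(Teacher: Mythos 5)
Your proof is correct in its core and takes a genuinely different route from the paper, which in fact offers no argument at all for Proposition~\ref{s2}: it simply invokes Lemma~3.3 of Brock's survey. You instead give a self-contained elementary proof: reduce to $u,v\ge 0$, expand both sides by the layer-cake formula via Tonelli, identify $\{u^\sharp>t\}$ with $\{u>t\}^\sharp$ up to a null set (correctly, via $\{u^\sharp>t\}=\bigcup_{c>t}\{u>c\}^\sharp\subseteq\{u>t\}^\sharp$ together with \eqref{dino} and $|A|=|A^\sharp|$), and finish with the genuinely geometric fact that the sections $A^\sharp\cap l(x')$ and $B^\sharp\cap l(x')$ are concentric open intervals, hence nested, so that $|A\cap B\cap l(x')|_1\le\min\bigl(|A\cap l(x')|_1,|B\cap l(x')|_1\bigr)=|A^\sharp\cap B^\sharp\cap l(x')|_1$, and Fubini in $x'$. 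This is exactly the classical mechanism behind the cited lemma, and what your write-up buys is transparency: the only geometric input is the nesting of centered intervals, the rest is measure-theoretic bookkeeping, all of which checks out.

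One caveat concerns the preliminary reduction. With only $uv\in L^1(\Omega)$ assumed, $u$ and $v$ need not be individually integrable, and then the identity $\int_\Omega uv\,dx=\int_\Omega\tilde u\tilde v\,dx+a\int_\Omega\tilde v\,dx+b\int_\Omega\tilde u\,dx+ab|\Omega|$ can degenerate into $\infty-\infty$; moreover the leftover case is not automatically ``trivial'', since $\int_{\Omega^\sharp}u^\sharp v^\sharp\,dx$ may be finite even when $\int_{\Omega^\sharp}\tilde u^\sharp\tilde v^\sharp\,dx=+\infty$ (take $u$ a negative constant on half of $\Omega$ and zero where $v$ is nonintegrable). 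So your parenthetical dismissal of the infinite case does not cover everything the hypotheses permit. This is harmless for the way the proposition is used in Theorem~\ref{T1}, where one factor is bounded and the other integrable so that all cross terms are finite, and it disappears entirely if one states the proposition, as Brock does, for nonnegative (or individually integrable) functions; but as written your reduction step needs this extra finiteness hypothesis, or an additional truncation/limiting argument, to cover the full generality of the statement.
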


This proposition follows easily from \cite[Lemma 3.3]{Bro}.

\begin{proposition}[nonlocal P\`olya-Szeg\"o's inequality]\label{s3}
Let $\Omega\subset\mathbb{R}^N$ be an open bounded set, $s\in (0,1)$ and 
$u\in H_0^s(\Omega)$. Then
\begin{equation*}\label{2b}
\int_{\mathbb{R}^{2N}} \cfrac{|u^\sharp(x)-u^\sharp(y)|^2}{|x-y|^{N+2s}}\,dx\, dy\leq 
\int_{\mathbb{R}^{2N}} \cfrac{|u(x)-u(y)|^2}{|x-y|^{N+2s}}\,dx\, dy;
\end{equation*}
moreover, the equality holds if and only if $u$ is proportional to a translate of a 
function which is symmetric with respect to the hyperplane 
$T=\{x=(x_1,x')\in \mathbb{R}^N: x_1=0\}$.
\end{proposition}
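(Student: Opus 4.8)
The plan is to obtain the inequality from the analogous (and elementary) estimate for two-point rearrangements (\emph{polarizations}) of $u$, and then to pass to the limit, using that the Steiner symmetrization with respect to $T$ is an $L^2$-limit of iterated polarizations across hyperplanes parallel to $T$ (the polarization approach to symmetrization; see \cite{Bro}). Throughout I would extend $u$ by zero outside $\Omega$ (and read $u^\sharp$ as the line-by-line symmetric decreasing rearrangement of $u$ along the lines $l(x')$, which agrees with the definition above when $u$ is bounded below), and set $[v]^2:=\int_{\mathbb{R}^{2N}}|v(x)-v(y)|^2|x-y|^{-N-2s}\,dx\,dy$. For $\lambda\in\mathbb{R}$, let $\sigma_\lambda$ denote the reflection across $\{x_1=\lambda\}$, put $H=\{x_1>\lambda\}$, and define the polarization $u^H$ by $u^H=\max\{u,u\circ\sigma_\lambda\}$ on $H$ and $u^H=\min\{u,u\circ\sigma_\lambda\}$ on $H^c$. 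Since $u^H$ only interchanges the values of $u$ on the pairs $\{x,\sigma_\lambda x\}$, it is a rearrangement of $u$; in particular $\|u^H\|_{L^2(\mathbb{R}^N)}=\|u\|_{L^2(\mathbb{R}^N)}$.

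The main computation is the following identity for the seminorm under polarization: splitting the integral defining $[u]^2$ over $(H\times H)\cup(H\times H^c)\cup(H^c\times H)\cup(H^c\times H^c)$ and reflecting the last three blocks onto $H\times H$, one finds, with $a=u(x),\ a'=u(\sigma_\lambda x),\ b=u(y),\ b'=u(\sigma_\lambda y)$,
\[
[u]^2-[u^H]^2=\int_{H\times H} R(x,y)\,\big(|x-y|^{-N-2s}-|x-\sigma_\lambda y|^{-N-2s}\big)\,dx\,dy ,
\]
where $R(x,y)=|a-a'|\,|b-b'|-(a-a')(b-b')\ge 0$. Since $|x-\sigma_\lambda y|^2-|x-y|^2=4(x_1-\lambda)(y_1-\lambda)>0$ for $x,y\in H$, the kernel difference is strictly positive on $H\times H$, so $[u^H]\le[u]$. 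Then I would take a sequence of hyperplanes $\{x_1=\lambda_k\}$ parallel to $T$ for which the iterated polarizations $u_0=u$, $u_k=(u_{k-1})^{H_{\lambda_k}}$ converge to $u^\sharp$ in $L^2(\mathbb{R}^N)$, and use that $v\mapsto[v]^2$ is lower semicontinuous along $L^2$-convergent sequences (Fatou's lemma, along an a.e.\ convergent subsequence) to get $[u^\sharp]^2\le\liminf_k[u_k]^2\le[u]^2$. This proves the inequality, and also shows $u^\sharp\in H_0^s(\Omega^\sharp)$.

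For the equality case, if $[u^\sharp]=[u]$ then $[u_k]=[u]$ for all $k$, so equality holds at every polarization step; by the identity above and the \emph{strict} positivity of the kernel difference on $H_{\lambda_k}\times H_{\lambda_k}$, this forces $R\equiv 0$ there, i.e.\ $v-v\circ\sigma_{\lambda_k}$ has constant sign on $H_{\lambda_k}$ for $v=u_{k-1}$ --- equivalently, $u_{k-1}$ is already polarized, possibly after composition with $\sigma_{\lambda_k}$. Upgrading this to a statement about $u$ itself, by a density argument over all hyperplanes parallel to $T$ together with the known rigidity of the polarization method, gives that $u$ coincides, up to a translation in the $x_1$-direction and a multiplicative constant, with its Steiner symmetrization $u^\sharp$ --- which is exactly the claim that $u$ is proportional to a translate of a function symmetric with respect to $T$; the converse is immediate. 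A possibly cleaner alternative for both parts is the subordination identity $[u]^2=c_{N,s}\int_0^\infty t^{\frac N2+s-1}\!\int_{\mathbb{R}^{2N}}|u(x)-u(y)|^2 e^{-t|x-y|^2}\,dx\,dy\,dt$: the Gaussian kernel is Steiner symmetric and $\|u^\sharp\|_{L^2}=\|u\|_{L^2}$, so for each $t$ Riesz's rearrangement inequality applied to the autocorrelation of $u$ yields the estimate, and the equality case follows from Burchard's characterization of equality in Riesz's inequality.

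The inequality and the limiting argument are routine once the polarization identity is in hand. The hard part will be the equality case: the set of configurations where $R$ vanishes has positive measure rather than measure zero, so one must argue carefully that the one-sidedness of $u_{k-1}-u_{k-1}\circ\sigma_{\lambda_k}$ persists through the $L^2$-limit and propagates across a dense family of hyperplanes, in order to conclude that $u$ is a translate of $u^\sharp$.
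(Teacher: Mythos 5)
The paper does not actually prove Proposition \ref{s3}: the statement is imported from the literature and the proof is deferred to \cite{Contador}, so there is no internal argument to compare yours with. Judged on its own, your polarization scheme is a standard and, for the inequality, essentially correct route: the identity $[u]^2-[u^H]^2=\int_{H\times H}R(x,y)\bigl(|x-y|^{-N-2s}-|x-\sigma_\lambda y|^{-N-2s}\bigr)\,dx\,dy$ with $R\geq 0$ is right, the kernel difference is strictly positive on $H\times H$, and Fatou along an a.e.\ convergent subsequence closes the limit, once you invoke the approximation of Steiner symmetrization by iterated polarizations across hyperplanes parallel to $T$ (see \cite{Bro}). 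Two caveats, though. That approximation, and the identification of the paper's $u^\sharp$ (defined through the sets $\{u>c\}^\sharp$ inside $\Omega$) with the line-by-line symmetrization of the zero extension of $u$, require $u\geq 0$, not merely ``bounded below'' as you assert; for sign-changing $u$ the two objects differ, so your argument covers the nonnegative case only (which is all the paper needs, since the proposition is applied to the positive first eigenfunction). Also, ``the converse is immediate'' holds only if ``symmetric'' is read as Steiner symmetric, i.e.\ $u$ a translate of its own symmetrization; with mere reflection symmetry it is false (an imprecision already present in the statement).

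The genuine gap is the ``only if'' part, which is exactly what Theorem \ref{T1} uses: ``a density argument over all hyperplanes parallel to $T$ together with the known rigidity of the polarization method'' is not an argument, and no multiplicative constant can arise from that mechanism. The repair is shorter than you fear and needs no dense family of hyperplanes: equality forces $R\equiv 0$ a.e.\ on $H_{\lambda_k}\times H_{\lambda_k}$ at every step; since $R(x,y)>0$ whenever $w_k(x)w_k(y)<0$, where $w_k=u_{k-1}-u_{k-1}\circ\sigma_{\lambda_k}$, one of $\{w_k>0\}$, $\{w_k<0\}$ is null, hence $u_k=u_{k-1}$ or $u_k=u_{k-1}\circ\sigma_{\lambda_k}$ a.e., and inductively $u_k=u\circ\tau_k$ with $\tau_k(x)=(\pm x_1+c_k,x')$. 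Because $u_k\to u^\sharp\neq 0$ in $L^2$ and each $u_k$ is supported in an $x_1$-translate/reflection of $\overline\Omega$, the $c_k$ are bounded; along a subsequence $c_k\to c$ with a fixed sign, and continuity of translations in $L^2$ gives $u^\sharp=u\circ\tau$, i.e.\ $u$ is an $x_1$-translate of the Steiner symmetric function $u^\sharp$ (a reflection is absorbed by its symmetry), which implies the stated conclusion. Finally, your alternative via subordination and Riesz is fine for the inequality (again for $u\geq0$), but not for the equality case as claimed: Burchard's theorem characterizes equality for the Schwarz symmetrization, whereas here you would need its one-dimensional/Steiner version applied line by line plus an argument that the translation is the same on almost every line --- which is precisely the delicate point, so that route is not a shortcut.
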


For the proof we refer the reader to \cite{Contador}.

\begin{theorem}\label{T1}
Let $\Omega\subset\mathbb{R}^N$ be a bounded domain of class $C^{2}$ Steiner 
symmetric with respect to the hyperplane
$T=\{x=(x_1,x')\in \mathbb{R}^N: x_1=0\}$ and $\rho_0\in L^\infty(\Omega)$ such that 
$|\{\rho_0>0\}|>0$. 
Then, every minimizer $\check{\rho}_1$ of the problem \eqref{inf} is Steiner 
symmetric relative to $T$. 
\end{theorem}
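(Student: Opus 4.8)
The plan is to show that if $\check\rho_1$ is a minimizer of $\lambda_1$ on $\mathcal{G}(\rho_0)$, then $\check\rho_1$ coincides with its own Steiner symmetrization, exploiting the characterization $\check\rho_1=\psi(u_{\check\rho_1})$ from Theorem \ref{exist}~ii) together with the symmetrization inequalities of Propositions \ref{s1}--\ref{s3}. First I would set $u=u_{\check\rho_1}$, the positive first eigenfunction normalized by \eqref{normaliz1}, and consider its Steiner symmetrization $u^\sharp\in H_0^s(\Omega^\sharp)=H_0^s(\Omega)$ (using $\Omega^\sharp=\Omega$). By the nonlocal P\`olya--Szeg\"o inequality (Proposition \ref{s3}), $\|u^\sharp\|_{H_0^s(\Omega)}\le\|u\|_{H_0^s(\Omega)}=1$. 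By \eqref{dino}, $u^\sharp\sim u$, so in particular $u^\sharp$ and $u$ have the same distribution function; combined with Proposition \ref{s1} this gives $\psi(u^\sharp)=(\psi(u))^\sharp=\check\rho_1^\sharp$, hence $\check\rho_1^\sharp\sim\check\rho_1\sim\rho_0$, i.e. $\check\rho_1^\sharp\in\mathcal{G}(\rho_0)$.

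Next I would estimate $\lambda_1(\check\rho_1^\sharp)$ from above using the variational characterization \eqref{2a} with test function $u^\sharp$, then apply the Hardy--Littlewood inequality (Proposition \ref{s2}) to the pair $\check\rho_1=\psi(u)$ and $(u)^2$: since $(u^2)^\sharp=(u^\sharp)^2$ (as $t\mapsto t^2$ is increasing on the range of the nonnegative function $u$, again via Proposition \ref{s1}) and $\check\rho_1^\sharp=\psi(u^\sharp)$, we get
\begin{equation*}
\int_\Omega \check\rho_1 u^2\,dx\le\int_{\Omega}\check\rho_1^\sharp (u^\sharp)^2\,dx.
\end{equation*}
Therefore
\begin{equation*}
\lambda_1(\check\rho_1^\sharp)\le\frac{\|u^\sharp\|_{H_0^s(\Omega)}^2}{\int_\Omega\check\rho_1^\sharp(u^\sharp)^2\,dx}\le\frac{\|u\|_{H_0^s(\Omega)}^2}{\int_\Omega\check\rho_1 u^2\,dx}=\lambda_1(\check\rho_1),
\end{equation*}
where the last equality is \eqref{normaliz2}. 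Since $\check\rho_1^\sharp\in\mathcal{G}(\rho_0)$ and $\check\rho_1$ is a minimizer, all inequalities are equalities; in particular $\check\rho_1^\sharp$ is also a minimizer, and $\|u^\sharp\|_{H_0^s(\Omega)}=\|u\|_{H_0^s(\Omega)}$ forces equality in the P\`olya--Szeg\"o inequality.

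From the equality case in Proposition \ref{s3}, $u$ is proportional to a translate of a function symmetric with respect to $T$. Here the main obstacle is upgrading this ``translate of a symmetric function'' conclusion to genuine $T$-symmetry of $u$, and from there to $T$-symmetry of $\check\rho_1$. I would argue as follows: $u\in H_0^s(\Omega)$ is positive in $\Omega$ and vanishes on $\Omega^c$; if $u$ equalled $c\,w(\cdot-a)$ with $w$ symmetric about $T$ and $a=(a_1,0,\dots,0)$, then the support of $u$, namely $\overline\Omega$, would be a translate by $a$ of a set symmetric about $T$, hence symmetric about the hyperplane $\{x_1=a_1\}$. Because $\Omega$ is already Steiner symmetric about $T$ (so $\overline\Omega$ is symmetric about $\{x_1=0\}$) and connected, the two symmetry hyperplanes must coincide, giving $a_1=0$; thus $u$ itself is symmetric about $T$, i.e. $u=u^\sharp$. (One must also rule out $c<0$, which is immediate since $u>0$.) Consequently $\check\rho_1=\psi(u)=\psi(u^\sharp)=(\psi(u))^\sharp=\check\rho_1^\sharp$, which is the asserted Steiner symmetry. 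A minor technical point to be handled carefully is the a.e.-versus-everywhere status of the identity $\check\rho_1=\psi(u_{\check\rho_1})$ and the measurability/monotonicity of $\psi$ when composing with $u^\sharp$; these are covered by Proposition \ref{s1} and the fact that equimeasurable functions determine $\psi$ on the essential range of $u$.
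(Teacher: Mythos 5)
Your argument is correct and follows essentially the same route as the paper: symmetrize $u_{\check\rho_1}$ and $\check\rho_1$, use Hardy--Littlewood and the nonlocal P\`olya--Szeg\"o inequality together with $\check\rho_1^\sharp\in\mathcal{G}(\rho_0)$ and minimality to force equality in P\`olya--Szeg\"o, invoke its equality case, and then transfer the symmetry of $u_{\check\rho_1}$ to $\check\rho_1$ via the characterization $\check\rho_1=\psi(u_{\check\rho_1})$ and Proposition \ref{s1}. One small correction: the step forcing the two symmetry hyperplanes to coincide rests on the \emph{boundedness} of $\Omega$ (a nonempty set symmetric about two distinct parallel hyperplanes is invariant under a nontrivial translation, hence unbounded), not on connectedness --- an infinite strip is connected yet symmetric about infinitely many parallel hyperplanes --- and this is exactly how the paper concludes $v=0$.
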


\begin{proof}
Let $\check{\rho}_1$ be as in \eqref{inf} and 
 let $u_{\check{\rho}_1}$ be the 
positive first eigenfunction of the problem \eqref{p0} normalized as in \eqref{normaliz1}.

By \eqref{carat} and Proposition \ref{s1}, the Steiner symmetry of $\check{\rho}_1$ 
is a consequence of the analogous symmetry of $u_{\check{\rho}_1}$; hence it suffices to show that $u_{\check{\rho}_1}^\sharp=u_{\check{\rho}_1}$. By \eqref{2a} we have
\begin{equation*}
\lambda_1(\check{\rho}_1)=\cfrac{\|u_{\check{\rho}_1}\|^2_{H_0^s(\Omega)}}
{\int_\Omega \check{\rho}_1 u_{\check{\rho}_1}^2\,dx}\, .
\end{equation*}
Propositions \ref{s1}, \ref{s2}  and \ref{s3} yield
\begin{equation*}
 \int_\Omega \check{\rho}_1 u_{\check{\rho}_1}^2\,dx\leq
 \int_\Omega \check{\rho}_1^\sharp (u_{\check{\rho}_1}^2)^\sharp\,dx
 = \int_\Omega \check{\rho}_1^\sharp (u_{\check{\rho}_1}^\sharp)^2\,dx
\end{equation*}
and
\begin{equation}\label{elefante}
\|u_{\check{\rho}_1}\|^2_{H_0^s(\Omega)}
\geq \|u_{\check{\rho}_1}^\sharp\|^2_{H_0^s(\Omega)}.
\end{equation}
Consequently we find
\begin{equation*}
\lambda_1(\check{\rho}_1)=\cfrac{\|u_{\check{\rho}_1}\|^2_{H_0^s(\Omega)}}
{\int_\Omega \check{\rho}_1 u_{\check{\rho}_1}^2\,dx}\,
\geq\cfrac{\|u_{\check{\rho}_1}^\sharp\|^2_{H_0^s(\Omega)}}
{\int_\Omega \check{\rho}_1^\sharp (u_{\check{\rho}_1}^\sharp)^2\,dx}\,\geq
\cfrac{{\|u_{\check{\rho}_1^\sharp}\|^2_{H_0^s(\Omega)}}}
{\int_\Omega \check{\rho}_1^\sharp u_{\check{\rho}_1^\sharp}^2\,dx}
=\lambda_1(\check{\rho}_1^\sharp)\geq \lambda_1(\check{\rho}_1),
\end{equation*}
where $u_{\check{\rho}_1^\sharp}$ is the normalized positive first 
eigenfunction corresponding to $\check{\rho}_1^\sharp$ and the last inequality comes from 
$\check{\rho}_1^\sharp\in \mathcal{G}(\rho_0)$ (a straightforward consequence of \eqref{dino}) and 
the minimality of $\check{\rho}_1$.
Therefore, all the inequalities are actually equalities and this implies the equality sign also
in \eqref{elefante}. Then, by Proposition \ref{s3} it follows that
\begin{equation*}u_{\check{\rho}_1}(x)=\widetilde{u}(x),
\end{equation*}
where $\widetilde{u}$ is Steiner symmetric with respect to a hyperplane 
$T_v=\{x=(x_1,x')\in \mathbb{R}^N: x_1=v\}$, $v\in \mathbb{R}$.
Therefore $\Omega =\{u_{\check{\rho}_1}>0\}=\{\widetilde{u}>0\}$ is symmetric with respect 
to both hyperplanes $T$ and $T_v$.
Being $\Omega$ bounded, it follows that $v=0$ and then 
$u_{\check{\rho}_1}$ is Steiner symmetric 
relative to $T$, i.e. 
\begin{equation*}\label{finale}
u_{\check{\rho}_1}^\sharp=u_{\check{\rho}_1}.
\end{equation*}
This completes the proof.
\end{proof}

In particular, when $\Omega$ is a ball we find the following assertion.

\begin{corollary}
Let $\Omega$ be a ball in $\mathbb{R}^N$ and $\rho_0\in L^\infty(\Omega)$ such that 
$|\{\rho_0>0\}|>0$. Then every minimizer $\check{\rho}_1$
of problem \eqref{inf} is decreasing 
radially symmetric.
\end{corollary}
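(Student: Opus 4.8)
The plan is to deduce the corollary directly from Theorem \ref{T1} by exploiting the rotational invariance of the ball. The key observation is that a ball is Steiner symmetric with respect to \emph{every} hyperplane passing through its center; hence Theorem \ref{T1} applies for each such hyperplane and forces every minimizer $\check{\rho}_1$ to be symmetric about all of them simultaneously.

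First I would fix a minimizer $\check{\rho}_1$ of problem \eqref{inf}. For any unit vector $e\in\mathbb{R}^N$, after rotating coordinates so that $e$ becomes the first coordinate direction, the ball $\Omega$ (centered, say, at the origin) is Steiner symmetric with respect to the hyperplane $T_e=\{x\in\mathbb{R}^N : x\cdot e=0\}$, and the rotated weight is still a minimizer of the (rotationally equivalent) problem. Theorem \ref{T1} then gives that $\check{\rho}_1$ is Steiner symmetric relative to $T_e$. Equivalently, by \eqref{carat} and Proposition \ref{s1}, it suffices to observe that the first eigenfunction $u_{\check{\rho}_1}$ is Steiner symmetric relative to $T_e$ for every such $e$; since $\check{\rho}_1=\psi(u_{\check{\rho}_1})$ with $\psi$ increasing, the same symmetry is inherited by $\check{\rho}_1$.

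Next I would convert ``symmetric with respect to every hyperplane through the center'' into ``radially symmetric and radially decreasing.'' A bounded measurable function (defined up to null sets) that coincides with its Steiner symmetrization about every hyperplane through a fixed point is, almost everywhere, a radially nonincreasing function of the distance to that point: indeed, symmetry about $T_e$ means monotonicity along every line parallel to $e$ with a maximum on $T_e$, and letting $e$ range over all directions pins down the super-level sets $\{u_{\check{\rho}_1}>c\}$ to be concentric balls, whence $u_{\check{\rho}_1}$ is radially decreasing. Applying the increasing function $\psi$ preserves this, so $\check{\rho}_1$ is decreasing radially symmetric.

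The only mildly delicate point is the last step: justifying that invariance under Steiner symmetrization in all directions through the center really yields a radially decreasing profile, rather than merely handling finitely many directions. This is where one must argue carefully with the characterization of $u^\sharp$ through its super-level sets together with the equimeasurability relation \eqref{dino}: each $\{u_{\check{\rho}_1}>c\}$ is, up to a null set, Steiner symmetric about every hyperplane through the origin, and a set of finite measure with that property must be (equivalent to) a ball centered at the origin. Once the super-level sets are concentric balls, radial monotonicity of $u_{\check{\rho}_1}$, and hence of $\check{\rho}_1=\psi(u_{\check{\rho}_1})$, follows immediately, completing the proof.
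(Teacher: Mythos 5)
Your argument is correct and is exactly the route the paper intends: the corollary is presented as an immediate consequence of Theorem \ref{T1}, obtained by applying it to every hyperplane through the center of the ball (using rotational invariance of the problem and of $\mathcal{G}(\rho_0)$) and then passing from symmetry in all directions to a radially decreasing profile. Your extra care on the last step — identifying the super-level sets of $u_{\check{\rho}_1}$ as centered balls and transferring the monotonicity to $\check{\rho}_1=\psi(u_{\check{\rho}_1})$ via Proposition \ref{s1} — is a legitimate filling-in of details the paper leaves implicit, not a different approach.
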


\begin{rem}
Note that, in this case, $\check{\rho}_1$ is unique and explicitly determined by the class of 
rearrangement of $\rho_0$. Indeed, we have $\check{\rho}_1(x)=\rho_0^*(\omega_N |x|^N)$
for any $x\in \Omega$, where $\omega_N$ denotes the measure of the unit ball in 
$\mathbb{R}^N$.
\end{rem}

Recalling Remark \ref{ape}, we can immediately state the symmetry results for 
$\lambda_{-1}(\rho)$. 

\begin{theorem}
Let $\Omega\subset\mathbb{R}^N$ be a bounded domain of class $C^{2}$ Steiner 
symmetric with respect to the hyperplane
$T=\{x=(x_1,x')\in \mathbb{R}^N: x_1=0\}$ and $\rho_0\in L^\infty(\Omega)$ such that 
$|\{\rho_0<0\}|>0$. 
Then, every maximizer $\check{\rho}_{-1}$ of the problem \eqref{inf2} is such that $-\check{\rho}_{-1}$
is Steiner symmetric relative to $T$. 
\end{theorem}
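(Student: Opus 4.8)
The plan is to deduce this theorem directly from Theorem \ref{exist} and Theorem \ref{T1} by exploiting the correspondence between the first negative eigenvalue for the weight $\rho$ and the first positive eigenvalue for the weight $-\rho$, as already recorded in Remark \ref{ape}. First I would set $\eta_0 = -\rho_0$ and observe that, since $|\{\rho_0<0\}|>0$, we have $|\{\eta_0>0\}|>0$, so $\eta_0$ satisfies the hypothesis of both Theorem \ref{exist} and Theorem \ref{T1}. Moreover, by iii) of Proposition \ref{rospo} applied with $F(t)=-t$, one has $\mathcal{G}(\eta_0)=-\mathcal{G}(\rho_0)$, i.e. $\rho\in\mathcal{G}(\rho_0)$ if and only if $-\rho\in\mathcal{G}(\eta_0)$.

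Next I would translate the optimization problem. By Remark \ref{ape}, $\lambda_{-1}(\rho)=-\lambda_1(-\rho)$ for every $\rho\in L^\infty(\Omega)$ with $|\{\rho<0\}|>0$, so
\begin{equation*}
\max_{\rho\in\mathcal{G}(\rho_0)}\lambda_{-1}(\rho)
=\max_{\rho\in\mathcal{G}(\rho_0)}\big(-\lambda_1(-\rho)\big)
=-\min_{\rho\in\mathcal{G}(\rho_0)}\lambda_1(-\rho)
=-\min_{\sigma\in\mathcal{G}(\eta_0)}\lambda_1(\sigma).
\end{equation*}
Hence a weight $\check{\rho}_{-1}\in\mathcal{G}(\rho_0)$ is a maximizer of $\lambda_{-1}$ over $\mathcal{G}(\rho_0)$ if and only if $\sigma_1:=-\check{\rho}_{-1}\in\mathcal{G}(\eta_0)$ is a minimizer of $\lambda_1$ over $\mathcal{G}(\eta_0)$, that is, $\sigma_1$ is a minimizer of the problem \eqref{inf} with $\eta_0$ in place of $\rho_0$.

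Then I would invoke Theorem \ref{T1} applied to the domain $\Omega$ (which is Steiner symmetric with respect to $T$ by hypothesis) and to the weight $\eta_0$: every minimizer of \eqref{inf} for $\eta_0$ is Steiner symmetric relative to $T$. Applying this to $\sigma_1$ gives $\sigma_1^\sharp=\sigma_1$, i.e. $(-\check{\rho}_{-1})^\sharp=-\check{\rho}_{-1}$, which is precisely the assertion. The existence part, that such a maximizer $\check{\rho}_{-1}\in\mathcal{G}(\rho_0)$ exists at all, also follows from this correspondence together with part i) of Theorem \ref{exist} (applied to $\eta_0$), as already noted in the restated theorem in Remark \ref{ape}.

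There is essentially no hard step here: the entire argument is the bookkeeping of the sign flip $\rho\mapsto-\rho$ and the dictionary between $\lambda_{-1}$ and $\lambda_1$, all of which is supplied by Remark \ref{osserv}, Remark \ref{ape}, Proposition \ref{rospo} and the already-proven Theorems \ref{exist} and \ref{T1}. The only point requiring a small amount of care is to make sure the change of variables is applied consistently, namely that the symmetrization operation $\sharp$ commutes with the sign change in the sense that the symmetry of $\sigma_1$ is equivalent to the symmetry of $-\check{\rho}_{-1}$ — which is immediate from the definitions — and to confirm that the first positive eigenfunction of \eqref{p0} with weight $-\check{\rho}_{-1}$, which is the $u_{-\check{\rho}_{-1}}$ appearing in the characterization, is indeed the object governing the symmetry of $\sigma_1$ via \eqref{carat}. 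I expect this verification to be entirely routine.
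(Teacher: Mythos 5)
Your proposal is correct and is essentially the paper's own argument: the paper states this theorem as an immediate consequence of Remark \ref{ape} (the identities $\lambda_{-1}(\rho)=-\lambda_1(-\rho)$ and $\mathcal{G}(-\rho_0)=-\mathcal{G}(\rho_0)$) together with Theorem \ref{T1}, exactly the sign-flip dictionary you spell out. Nothing further is needed.
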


\begin{corollary}
Let $\Omega$ be a ball in $\mathbb{R}^N$ and $\rho_0\in L^\infty(\Omega)$ such that 
$|\{\rho_0<0\}|>0$. Then every maximizer $\check{\rho}_{-1}$
of problem \eqref{inf2} is increasing 
radially symmetric. More precisely, we have the unique maximizer $\check{\rho}_{-1}(x)=
-(-\rho_0)^*(\omega_N |x|^N)$
for any $x\in \Omega$.
\end{corollary}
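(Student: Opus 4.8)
The plan is to reduce the corollary to Theorem~\ref{T1} by observing that a ball is Steiner symmetric with respect to every hyperplane through its centre, and that simultaneous Steiner symmetry in enough directions forces radial symmetry. First I would note that, up to a translation, we may assume $\Omega=B_R(0)$. Since $|\{\rho_0>0\}|>0$, Theorem~\ref{exist} gives a minimizer $\check{\rho}_1\in\mathcal{G}(\rho_0)$ of \eqref{inf}, and by \eqref{carat} we have $\check{\rho}_1=\psi(u_{\check{\rho}_1})$ a.e.\ in $\Omega$ for some increasing $\psi$, where $u_{\check{\rho}_1}$ is the normalized positive first eigenfunction. So it suffices to prove that $u_{\check{\rho}_1}$ is radially symmetric and radially decreasing: then $\check{\rho}_1$, being an increasing function of $u_{\check{\rho}_1}$, is radially decreasing as well.

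Next I would apply Theorem~\ref{T1} with respect to an arbitrary hyperplane $T$ through the origin. For any direction $e\in\mathbb{R}^N$, $|e|=1$, the ball $B_R(0)$ is Steiner symmetric with respect to the hyperplane $T_e=\{x:x\cdot e=0\}$; hence Theorem~\ref{T1} (applied after a rotation sending $e$ to the first coordinate axis) yields that $\check{\rho}_1$, and therefore $u_{\check{\rho}_1}$, is Steiner symmetric with respect to $T_e$. Since this holds for every direction $e$, the function $u_{\check{\rho}_1}$ is Steiner symmetric with respect to all hyperplanes through the origin. A standard fact about Steiner symmetrization is that being Steiner symmetric with respect to a hyperplane $T_e$ implies, in particular, symmetry under the reflection across $T_e$ together with monotonicity along the $e$-direction on each line $l(x')$ parallel to $e$; imposing this for all $e$ forces the superlevel sets $\{u_{\check{\rho}_1}>c\}$ to be balls centred at the origin, i.e.\ $u_{\check{\rho}_1}$ is radially symmetric and radially nonincreasing. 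Consequently $\check{\rho}_1=\psi(u_{\check{\rho}_1})$ is radially nonincreasing.

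Finally I would identify $\check{\rho}_1$ explicitly. A radially decreasing function on $B_R(0)$ is determined by its distribution function: writing $\check{\rho}_1(x)=h(|x|)$ with $h$ nonincreasing, the measure of $\{\check{\rho}_1>t\}$ equals $\omega_N r(t)^N$ where $r(t)=\sup\{r:h(r)>t\}$, and matching this with $d_{\rho_0}$ via $\check{\rho}_1\sim\rho_0$ (Definition~\ref{class}) gives $h(r)=\rho_0^*(\omega_N r^N)$, that is $\check{\rho}_1(x)=\rho_0^*(\omega_N|x|^N)$; uniqueness follows because this formula leaves no freedom. The main obstacle I expect is the geometric step: carefully arguing that Steiner symmetry with respect to every hyperplane through a fixed point implies radial symmetry. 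One clean way around it is to avoid invoking all directions and instead use that $\Omega=B_R(0)$ is symmetric under the whole orthogonal group, so a minimizer composed with any rotation is again a minimizer; combined with the uniqueness of the radially symmetric candidate obtained from Theorem~\ref{T1} applied to $N$ mutually orthogonal hyperplanes (which already pins down $u_{\check{\rho}_1}$ as an even function, monotone along each axis, on the ball), one concludes radial symmetry without a separate geometric lemma. The corollary for $\lambda_{-1}$ then follows verbatim from Remark~\ref{ape}, replacing $\rho_0$ by $-\rho_0$ and ``decreasing'' by ``increasing''.
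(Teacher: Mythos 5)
Your proposal is correct and follows the paper's intended route: the paper gives this corollary no separate proof, treating it as an immediate consequence of Remark \ref{ape} together with the corresponding statement for $\lambda_1$ on the ball, which itself comes from applying Theorem \ref{T1} to every hyperplane through the centre and then matching distribution functions to obtain $\rho_0^*(\omega_N|x|^N)$; your reduction via $\lambda_{-1}(\rho)=-\lambda_1(-\rho)$, $\mathcal{G}(-\rho_0)=-\mathcal{G}(\rho_0)$ and $|\{-\rho_0>0\}|=|\{\rho_0<0\}|>0$ is exactly this. Two repairs, though. First, Theorem \ref{T1} as stated yields the Steiner symmetry of the minimizer $\check{\rho}_1$, not of $u_{\check{\rho}_1}$, so the inference ``and therefore $u_{\check{\rho}_1}$'' is backwards; simply run your all-directions argument (reflection symmetry across every hyperplane through the origin plus centred-interval sections along every line through the origin, hence superlevel sets are balls) directly on $\check{\rho}_1$ — that is all you need for the formula and for uniqueness, and the detour through $u_{\check{\rho}_1}$ can be dropped. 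Second, the suggested ``clean way around'' the geometric step is not sound: Steiner symmetry with respect to only $N$ mutually orthogonal hyperplanes does not pin down the function (it gives coordinate-wise symmetry, compatible with non-radial profiles), and the observation that a rotated minimizer is again a minimizer yields radial symmetry only if uniqueness of the minimizer is already known — but here uniqueness is a consequence of the symmetry result, not an available input, so that shortcut is circular and should be discarded in favour of your primary argument.
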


\textbf{Acknowledgement}.
The authors are members of GNAMPA (Gruppo Nazionale per l'Analisi Matematica, la Probabilit\`a e le loro Applicazioni) of INdAM (Istituto Nazionale di Alta Matematica ``Francesco Severi").
Claudia Anedda and Fabrizio Cuccu are partially supported by the research project Integro-differential Equations and Non-Local Problems, funded by Fondazione di Sardegna (2017).

\tt Claudia Anedda: canedda@unica.it\\
Fabrizio Cuccu: fcuccu@unica.it\\
Silvia Frassu: silvia.frassu@unica.it

\end{document}